\DeclareMathOperator*{\softmin}{soft-min}
\newtheorem{theorem}{Theorem}[section]
\newtheorem{lemma}[theorem]{Lemma}
\newtheorem{proposition}[theorem]{Proposition}
\newtheorem{assumption}[theorem]{Assumption}
\newtheorem{remark}[theorem]{Remark}
\newtheorem{definition}[theorem]{Definition}
\DeclareMathOperator*{\argmax}{arg\,max}
\DeclareMathOperator*{\argmin}{arg\,min}
\DeclareFontFamily{U}{mathx}{\hyphenchar\font45}
\DeclareFontShape{U}{mathx}{m}{n}{
      <5> <6> <7> <8> <9> <10>
      <10.95> <12> <14.4> <17.28> <20.74> <24.88>
      mathx10
      }{}
\DeclareSymbolFont{mathx}{U}{mathx}{m}{n}
\DeclareMathAccent{\widecheck}{0}{mathx}{"71}
\DeclareMathAccent{\wideparen}{0}{mathx}{"75}
\title{Analysis of Multiscale Reinforcement Q-Learning Algorithms for Mean Field Control Games
}
\author{
Andrea Angiuli\thanks{Prime Machine Learning Team, Amazon. 320 Westlake Ave N, SEA83, Seattle, WA, 98109 (E-mail: \href{mailto:aangiuli@amazon.com}{aangiuli@amazon.com}). The work presented here does not relate to this author's position at Amazon. }
  \and Jean-Pierre Fouque\thanks{Department of Statistics and Applied Probability, South Hall, University of California, Santa Barbara, CA 93106, USA (E-mail: \href{mailto:fouque@pstat.ucsb.edu}{fouque@pstat.ucsb.edu}). Work supported by NSF grant DMS-1953035.} 
  \and Mathieu Lauri\`ere\thanks{Shanghai Frontiers Science Center of Artificial Intelligence and Deep Learning; NYU-ECNU Institute of Mathematical Sciences at NYU Shanghai; NYU Shanghai, 567 West Yangsi Road, Shanghai, 200126, People’s Republic of China (E-mail: 
  \href{mailto:mathieu.lauriere@nyu.edu}{mathieu.lauriere@nyu.edu}).}
  \and  Mengrui Zhang\thanks{Department of Statistics and Applied Probability, South Hall, University of California, Santa Barbara, CA 93106, USA (E-mail: \href{mailto:mengrui@umail.ucsb.edu}{mengrui@umail.ucsb.edu}).
  }}
\begin{document}

\maketitle
\centerline{\textbf{Abstract}}
\begin{adjustwidth}{50pt}{50pt} 
Mean Field Control Games (MFCG), introduced in \cite{andrea22}, represent competitive games between a large number of large collaborative groups of agents in the infinite limit of number and size of groups. In this paper, we prove the convergence of a three-timescale Reinforcement Q-Learning (RL) algorithm to solve MFCG in a model-free approach from the point of view of representative agents. Our analysis uses a Q-table for finite state and action spaces updated at each discrete time-step over an infinite horizon. In \cite{andrea23}, we proved convergence of two-timescale algorithms for MFG and MFC separately highlighting the need to follow multiple population distributions in the MFC case. Here, we integrate this feature for MFCG as well as three rates of update decreasing to zero in the proper ratios. Our technique of proof uses a generalization to three timescales of the two-timescale analysis in \cite{Borkar97}. We give a simple example satisfying the various hypothesis made in the proof of convergence and  illustrating the performance of the algorithm. 
\end{adjustwidth}

\section{Introduction}

\subsection{Background}

Reinforcement learning (RL) is a set of methods to solve complex problems without knowing the full knowledge of the model. We refer e.g. to~\cite{sutton2018reinforcement}
 for more background on this topic. While RL is traditionally focused on solving discrete time optimal control problems, also called Markov decision processes (MDP), the topic has recently attracted the interest of the stochastic optimal control community and several connections with continuous time optimal control methods have been made, see e.g.~\cite{wang2020reinforcement,wang2020continuous}. Besides optimal control problem, RL methods are also used to solve games. However, games with a large number of players are challenging for traditional methods, including multi-agent reinforcement learning methods. In order to scale up in terms of number of agents, mean field games (MFGs) have been introduced by~\cite{MR2295621} and~\cite{MR2346927}. Such games provide approximate Nash equilibria for finite-player games by passing to the limit and studying the interactions between a representative player and a mean field representing the rest of the population. RL methods for MFGs have been developed, see e.g.~\cite{SubramanianMahajan-2018-RLstatioMFG,guo2019learning,elie2020convergence,cui2021approximately} among many others. While MFGs typically use the notion of Nash equilibria, which corresponds to non-cooperative players, one can also consider an infinite population of cooperative players, which leads to the notion of solution of social optimum or mean field control (MFC). MFC problems have also been studied from the RL viewpoint, see e.g.~\cite{CarmonaLauriereTan-2019-LQMFRL,gu2021meanQ,motte2019mean,carmona2023model,frikha2023actor}. We refer the interested reader to~\cite{MR3134900} and~\cite{carmona2018probabilisticI-II} for more details on the theoretical background on MFGs and MFC problems, and to~\cite{lauriere2022learning} for a survey on learning in MFGs. 

Although the classical setting of MFGs and MFC problems efficiently addresses the question of scalability in terms of population size, these two classes of problems focus on extreme situations where all the agents are either non-cooperative or cooperative. However, many situations involve a combination of cooperation and competition. For example, in some situations, there is a competition between several large coalitions. Such problems have been studied in the framework of mean-field type games~\cite{tembine2017mean}. When the number of coalitions is also large, the problem can be approximated using the framework of mean field control games (MFCG) introduced in~\cite{andrea22}. The main goal of the present paper is to provide a proof of convergence for the RL algorithm proposed in~\cite{andrea22} to solve MFCGs.

This algorithm builds on several previous works. In \cite{Andrea20}, the authors proposed a unified RL algorithm to solve MFG and MFC problems in the context of finite state and action spaces, and in discrete infinite time horizon. The algorithm depends on the ratio of the two learning rates, one for the Q-function and the other for the distribution of the population, leading to a two timescale algorithm. In \cite{andrea23}, we proved the convergence of this algorithm using a generalization of the  two-timescale approach of \cite{Borkar97}. This careful mathematical analysis revealed that the MFC case requires the updating of one population distribution for each state-action point which led us to consider two different algorithm, one for MFG and one for MFC. On the other hand,  \cite{andrea22} revealed that a single algorithm with three timescales could handle MFCGs, which are a mix of MFG and MFC. They naturally model competitive games between a large number of large collaborative groups of agents. In the limit of the number of groups and the sizes of groups going to infinity, we showed in  \cite{andrea22}  that MFCG provides an approximation of the corresponding finite player game. The unified three-timescale RL algorithm that we analyse in this paper updates the Q-function of a representative agent learning the optimal policy at equilibrium, the global population distribution, and one local population distribution for each state-action point. The local populations are updated faster the Q-function which in turns is updated faster than the global population.

\subsection{Structure of the Paper}

In Section \ref{sec:notations}, we recall the classical Q-learning setup and we introduce the notations needed for the Mean Field Control Game (MFCG) problem presented in Section \ref{def:MFCG}. In Section \ref{sec:algos}, we present the algorithms studied in the paper. We start in Section \ref{sec:fullalgo} with the full asynchronous algorithm with stochastic approximation and explain that it is {\it model free} from the point of view of representative agents, one for each population involved in the system. We also highlight the three timescales involved, one fast for the local population distributions (MFC aspect), one slow for the global population (MFG aspect), and one in between for the update of the Q-table used to choose the policy.  In Section \ref{sec:syncalgo}, we simplify the algorithm by allowing to update synchronously all the entries of the Q-table at each time step. We further simplify it in Section \ref{sec:idealalgo} by assuming that expectations are known without stochastic approximation leading to a deterministic system of iterations. In Section \ref{sec:convergence}, we derive the convergence of our algorithms, starting with ideal deterministic iteration scheme, then generalized to include successively  stochastic approximation and asynchronous features. The proofs are based on generalizations to three timescales the results of \cite{Borkar97}.  We end in Section \ref{sec:example} with numerical illustrations on a very simple example with two states and two actions.

\section{Model and MFCG Formulation}\label{sec:notations}

\subsection{Model and Notations}

First, we present classical Q-learning in a nutshell.
\paragraph{Classical Q-learning. }  Classical RL aims at solving a Markov Decision Process (MDP) using the following setting. At each discrete time $n$, the agent observes her state  $X_n$ in a finite state space $\mathcal{{X}} =\{ x_0, \dots, x_{|\mathcal{{X}}|-1}\}$, and, based on it, chooses an action $A_n$ in a finite action space $\mathcal{{A}} =\{ a_0, \dots, a_{|\mathcal{{A}}|-1}\}$. Then, the environment evolves and provides the agent with a new state $X_{n+1}$ and reports a reward  $r_{n+1}$. The goal of the agent is to find the optimal policy $\pi$ such that it assigns to each state the optimal probability of actions in order to maximize the expected cumulative reward. The problem can be recast as the problem of learning the optimal state-action value function, also called Q-function: $Q^\pi(x,a)$ represents the expected cumulative discount rewards when starting at state $x$, using an action $a$, and then following policy $\pi$. Mathematically, 
$$
    Q^\pi(x,a) = \mathbbm{E}\left[\sum_{n=0}^{\infty}\gamma^n r_{n+1}|X_0 = x, A_0=a\right],
$$
where $r_{n+1} = r(X_n,A_n)$ is the instantaneous reward, $\gamma\in(0,1)$ is a discounting factor, $X_{n+1}$ is distributed according to a transition probability which depends on $X_n$ and the action drawn from $\pi(X_n)$. The goal is to compute the optimal Q-function defined as:
\[
    Q^*(x,a) = \max_{\pi} Q^\pi(x,a),
\]
which in turns gives the optimal control $\alpha^*(x):=\argmax_aQ^*(x,a)$ if unique. 

To this end, the Q-learning method was introduced by \cite{watkins1989learning}. The basic idea is to iteratively sample an action $A_n \sim \pi(X_n)$ according to a behavior policy $\pi$, observe the induced state $X_{n+1}$ and reward $r_{n+1}$, and then update the Q-table according to the formula: 
\[
    Q(X_n,A_n)
        \gets Q(X_n,A_n) +  \rho\left[r_{n+1} + \gamma\max_{a'\in\mathcal{A}}Q(X_{n+1},a')-Q(X_n,A_n)\right]
\]
where $\rho \in (0,1)$ is a learning rate. 

Note that in our case, to be consistent with most of the MFG and MFC literature, we will minimize costs instead of maximizing rewards.

\paragraph{Notations.}
The following notations will be used  in the context of MFCG studied in this paper.

As in the case of classical RL we denote by  $\mathcal{{X}} =\{ x_0, \dots, x_{|\mathcal{{X}}|-1}\}$ and $\mathcal{{A}} =\{ a_0, \dots, a_{|\mathcal{{A}}|-1}\}$ the  finite state and action spaces.  We denote by $\boldsymbol{\delta}$ the indicator function, i.e., for any $x \in \mathcal{X}$, $\boldsymbol{\delta}(x) = [\mathbbm{1}_{x_0}(x),\dots,\mathbbm{1}_{x_{|\mathcal{{X}}|-1}}(x)]$, which is a vector full of $0$ except at the coordinate corresponding to $x$. 
Let $\Delta^{|\mathcal{X}|}$ be  the simplex of probability measures on $\mathcal{X}$. Let $p$ : $\mathcal{X} \times \mathcal{X} \times \mathcal{A} \times \Delta^{|\mathcal{X}|} \times \Delta^{|\mathcal{X}|}\to \Delta^{|\mathcal{X}|}$ be a transition kernel which depends on both a global distribution $\mu$ and  local distribution $\mu'$. We also interpret the kernel $p$ as a function
\begin{align*}
    p : \mathcal{X} \times \mathcal{X} \times \mathcal{A} \times \Delta^{|\mathcal{X}|} \times \Delta^{|\mathcal{X}|}\to [0,1], \hspace{20pt}(x,x',a,\mu,\mu')\mapsto p(x'|x,a,\mu,\mu'),
\end{align*}
which provides the probability that the process jumps to the states $x'$ given the current state is $x$, the action $a$ is taken, the global distribution is $\mu$ and the local distribution is $\mu'$.
A policy $\pi\in \Pi$ is a collection of probability distributions $\{\pi(x)=\pi(\cdot|x), x\in \mathcal{X}\}$ on the set of actions $\mathcal{A}$, that is $\pi: \mathcal{X}\to\Delta^{|\mathcal{A}|}$.
We  denote by $\mathrm{P}^{{\pi},{\mu},\mu'}$ the transition kernel according to the global distribution ${\mu}$ and local distribution $\mu'$ on $\mathcal{X}$, and the policy ${{\pi}}\in \Pi$,  defined for any  distribution $\tilde\mu \in \Delta^{|\mathcal{X}|}$ by:
\begin{equation}
\label{eq:def-Ptransitions}
\tilde\mu\mathrm{P}^{\pi,\mu,\mu'} (x) = \sum_{x'\in \mathcal{X}} \tilde\mu(x') \sum_{a\in \mathcal{A}} \pi(a|x') p(x|x',a,\mu,\mu'), \qquad x \in \mathcal{X}.
\end{equation}
We denote by $\mu^{\pi, \mu}$ the asymptotic local distribution of the controlled process following the strategy $\pi$ when the global distribution is $\mu$. We assume  ergodicity of such processes so that $\mu^{\pi, \mu}$ exists and is unique. 

Let $f$: $\mathcal{X} \times \mathcal{A} \times \Delta^{|\mathcal{X}|}\times \Delta^{|\mathcal{X}|} \to \mathbbm{R}$ be a running cost function. We interpret $f(x,a,\mu,\mu')$ as the one-step cost, at any given time step, incurred to a representative agent who is at state $x$ and uses action $a$ while the population distributions are $\mu$ and $\mu'$.
For a policy $\pi$ we denote
\[
f(x,\pi,\mu,\mu')= \sum_{a\in \mathcal{A}} \pi(a|x) f(x,a,\mu,\mu')
\]
Based on these notations, we present the notion of Mean Field Control Game (MFCG) introduced in \cite{andrea22}.

\subsection{MFCG Definition} \label{def:MFCG}

Using the notations introduced in the previous section,
given a running cost function $f$, a discount rate $\gamma<1$, and an initial distribution $\mu_0\in \Delta^{|\mathcal{X}|}$, we consider the following {\bf infinite horizon asymptotic formulation of a mean field control game problem}:

\begin{definition}\label{mfcg_definition}
An MFCG problem consists in finding a policy $\hat{\pi}\in \Pi$ and a distribution $\hat{\mu}\in \Delta^{|\mathcal{X}|}$ such that:

    1. (best response) $\hat{\pi}$ is the minimizer of the value function
\begin{align*}
    \pi \mapsto V^{\pi}_{\hat{\mu}} := E\left[\sum_{n=0}^{\infty}\gamma^n f(X^{\pi,\hat{\mu}}_n,\pi(X^{\pi,\hat{\mu}}_n),\hat{\mu},\mu^{\pi,\hat\mu})\right],
\end{align*}
with $\mu^{\pi,\hat{\mu}} = \lim_{n\to\infty}\mathcal{L}(X^{\pi,\hat{\mu}}_n)$ where the process $(X^{\pi,\hat{\mu}}_n)_{n\geq 0}$ follows the dynamics
\begin{align*}
    \begin{cases}
        X^{\pi,\hat{\mu}}_0 \sim \mu_0
        \\
        P(X^{\pi,\hat{\mu}}_{n+1}=x'|X^{\pi,\hat{\mu}}_n=x,A_n=a,\mu=\hat{\mu}, \mu' =\mu^{\pi,\hat{\mu}} ) 
    = p(x'|x,a,\hat{\mu},\mu^{\pi,\hat{\mu}})\\
     A_n \sim \pi(\cdot|X_n^{\pi,\mu})\quad \mbox{independently at each time}\,\, n\geq 0.
    \end{cases}
\end{align*}

2. (consistency) $\hat{\mu}= \lim_{n\to\infty}\mathcal{L}(X^{\hat{\pi},\hat{\mu}}_n)=\mu^{\hat{\pi},\hat{\mu}}$.
\end{definition}

In order to make sense of the above problem statement we have to restrict to policies $\pi: \mathcal{X}\to\Delta^{|\mathcal{A}|}$ which are such that for any $\mu$ the controlled process $X^{\pi,\mu}_n$ has a limiting distribution, i.e. $\lim_{n\to\infty} \mathcal{L}(X^{\pi,\mu}_n)$ exists. For a finite state Markov chain this is the case if $(X^{\pi,\mu}_n)_n$ is irreducible and aperiodic. We therefore assume that the strategy $\hat{\pi}$ is the minimizer over all strategies such that $(X^{\pi,{\mu}}_n)_{n}$ is irreducible and aperiodic for all $\mu$. 

\subsubsection{Q-learning for MFCG}\label{sec:Q-MFCG}

 We  adapt the $Q$-learning concepts to the MFCG problem at hand. Focusing on the problem faced by a representative agent, we note that the local distribution is not fixed and depends on the policy itself. Thus, we have to adapt the classical $Q$-learning in the spirit of \cite{Andrea20}. For an admissible policy $\pi \in \Pi$ and a pair $(x,a) \in \mathcal{X}\times\mathcal{A}$, we define the new control $\tilde\pi^{(x,a)}$ by 
\begin{align}
    \tilde{\pi}^{(x,a)}(x') := \begin{cases}
        \delta_a\quad &\text{if}\hspace{5mm} x'=x, \\
        \pi(x')\quad &\text{for}\hspace{3mm} x'\neq x.
    \end{cases} 
\end{align}
Given a global distribution $\mu$ and a policy $\pi$, the $Q$-function for our problem is given by:
\[
    Q^{\pi}_{\mu} (x,a)=f(x,a,\mu,\mu^{\tilde\pi^{(x,a)},\mu}) + \mathbb{E}\left[\sum_{n=1}^{\infty}\gamma^{n}f\left(X^{\pi,\mu}_{n},\pi(X^{\pi,\mu}_{n}),\mu,\mu^{\tilde\pi^{(x,a)},\mu}\right)\Big\lvert X^{\pi,\mu}_{0}=x,A_{0}=a\right],
\]
where $\mu^{\tilde\pi^{(x,a)},\mu}$ is the limiting local distribution relative to the policy $\tilde\pi^{(x,a)}$ and when the global distribution is $\mu$.

The optimal function in the sense of minimizing cost is given by
$$
Q_{\mu}^{*} (x,a) := \min_{\pi} Q_{\mu}^{\pi}(x,a).
$$ 
Note that the minimizing strategy $\pi^*$ may depend on the global measure $\mu$.

A simple generalization of Theorem 2 in Appendix C in \cite{Andrea20}, gives the following Bellman equation satisfied by $Q_{\mu}^{*}$ for a fixed $\mu$:
\begin{equation}\label{MKV-Bellman}
Q_{\mu}^{*}(x,a)=f(x,a,\mu,\mu^{\widetilde{\pi^*}^{(x,a)},\mu})+\gamma\sum_{x'}p\left(x'\lvert x,a,\mu,\mu^{\widetilde{\pi^*}^{(x,a)},\mu}\right)\min_{a'}Q_{\mu}^{*}(x',a'),
\end{equation}
where $\mu^{\widetilde{\pi^*}^{(x,a)},\mu}$ is the limiting local distribution relative to the policy $\widetilde{\pi^*}^{(x,a)}$ and when the global distribution is $\mu$. Note that the advantage of introducing one local distribution for each state-action point $(x,a)$ is that the Q-table remains a function of state and action.

Finally, we introduce the specific Bellman operator $ \mathcal{B}$ given by
\begin{equation}\label{eq:BQ}
    (\mathcal{B}_{\mu,\mu'} Q)(x,a) := f(x,a,\mu,\mu')+\gamma \sum_{x'\in\mathcal{X}}p(x'|x,a,\mu,\mu')\min_{a'}Q(x',a'),
\end{equation}
for two distributions $\mu$ and $\mu'$, so that \eqref{MKV-Bellman} reads $Q^*_\mu(x,a)=\mathcal{B}_{\mu,\mu^{\widetilde{\pi^*}^{(x,a)},\mu}}Q^*_\mu(x,a)$, or, in a lighter notation, $Q^*_\mu=\mathcal{B}_{\mu,\mu^*}Q^*_\mu$ with $\mu^*=\mu^{\widetilde{\pi^*}^{(x,a)},\mu}$ where we have dropped the $(x,a)$-dependence.

\section{Algorithms and Multiscale Learning Rates}\label{sec:algos}

First, in Section \ref{sec:fullalgo}, we present our full three-timescale algorithm which updates the Q-table and the population distributions in an asynchronous way (one $(x,a)$ at a time along one path of a multidimensional controlled process) and uses a stochastic approximation. Then, in Section \ref{sec:syncalgo}, we consider a simplified version of the full algorithm which updates the Q-table and the population distributions synchronously (all $(x,a)$ at a time using sampling from the law of the controlled process) and still using stochastic approximation. Finally, in Section \ref{sec:idealalgo}, we replace the stochastic approximation by a perfect knowledge of the expectations involved in the time iterations, leading to a three-timescale ideal deterministic algorithm.

Our proof of convergence in Section \ref{sec:convergence}, will proceed the other way around by first proving convergence of the ideal deterministic algorithm by using a three-timescale generalization of two-timescale results in \cite{Borkar97}, and then successively restore the stochastic approximation  and the asynchronous features by controlling the corresponding errors.

\subsection{Full Algorithm}\label{sec:fullalgo}

We present an improved version of the  three-timescale Algorithm 1 introduced in \cite{andrea22}. The difference  is that it considers several population distributions and paths of multidimensional control processes which are essential features in the derivation of convergence results, the focus of this paper.

Algorithm~\ref{algo:U3MFQL} below shows the pseudo-code for our three-timescale MFCG Q-learning algorithm. It involves 
three learning rates that we choose of the following form
\begin{align*} %
    \rho^{\mu}_{n}=\frac{1}{(1+n)^{\omega^{\mu}}}\hspace{20pt}
    \rho^Q_{n,x,a}=\frac{1}{(1+\nu(x,a,n))^{\omega^Q}}\hspace{20pt} \rho^{\tilde{\mu}}_{n,x,a}=\frac{1}{(1+\nu(x,a,n))^{\omega^{\tilde{\mu}}}}
\end{align*}
where $\omega^\mu$, $\omega^Q$ and $\omega^{\tilde\mu}$ are parameters
such that $\frac{1}{2}<\omega^{\tilde{\mu}}<\omega^Q<\omega^{\mu}<1$
, and $\nu(x,a,n)$ is the number of times a controlled process $(X,A)$ visits  $(x,a)$ up to time $n$, that is, $\nu(x,a,n)=\sum_{m=0}^{n}\mathbbm{1}_{\{(X_m,A_m)=(x,a)\}}$.

\begin{algorithm}[H]
\caption{Three-timescale Mean Field Control Game Q-learning Algorithm\label{algo:U3MFQL}} 
\begin{algorithmic}[1]
\REQUIRE 
$N_{steps}$: number of steps; $\phi$: parameter for $\softmin$ policy; $(\rho^Q_{n,x,a})_{n,x,a}$: learning rates for the value function; $(\rho^\mu_{n})_{n}$: learning rate for the global distribution; $(\rho^{\tilde\mu}_{n,x,a})_{n,x,a}$: learning rates for the local distribution.
\STATE \textbf{Initialization}: $Q_{0}(x,a) =0$ for all $(x,a)\in \mathcal{{X}}\times \mathcal{{A}}$, $\mu_{0}=\mu_0^{(x,a)}=[\frac{1}{|\mathcal{{X}}|},\dots,\frac{1}{|\mathcal{{X}}|}] $

\STATE \textbf{Observe} $X_{0}\sim\mu_0$, $X^{(x,a)}_0\sim\mu_0^{(x,a)}$ \\

\STATE
{\textbf{Update distributions: } 
$\mu_{0} = \boldsymbol{\delta}(X_{0})$ , $\mu_0^{(x,a)}=\boldsymbol\delta(X^{(x,a)}_0)$}
\\

\FOR{$n=0,1,2,\dots, N_{steps}-1$}

\STATE \textbf{Choose action } $A_{n} \sim \softmin_\phi Q_n(X_{n},\cdot)$ and observe state $X_{n+1} \sim p(\cdot|X_{n}, A_{n}, \mu_n, \mu^{(X_n,A_n)}_n)$ and cost $f_{n+1}=f(X_{n},A_{n},\mu_{n},\mu^{(X_n,A_n)}_{n})$ provided by the environment %
\\
\STATE \textbf{Choose action } Choose $A^{(x,a)}_n = a$ if $X_n^{(x,a)} =x$, otherwise $A^{(x,a)}_{n} \sim \softmin_\phi Q_n(X^{(x,a)}_{n},\cdot)$ and observe state $X^{(x,a)}_{n+1} \sim p(\cdot|X^{(x,a)}_{n}, A^{(x,a)}_{n},\mu_n,\mu^{(x,a)}_{n})$\\

\STATE\textbf{Update local distributions: } $ \mu^{(x,a)}_{n+1}
        = \mu^{(x,a)}_n +  \rho^{\tilde\mu}_{n,X^{(x,a)}_n,A^{(x,a)}_{n}}(\boldsymbol{\delta}(X^{(x,a)}_{n+1}) - \mu^{(x,a)}_{n})$\\
\STATE\textbf{Update global distribution: } $\mu_{n+1} = \mu_{n} + \rho^{\mu}_{n} (\boldsymbol{\delta}(X_{n+1}) - \mu_{n})$
\\

\IF{$X^{(X_n,A_n)}_{n} = X_n$} %

    \STATE
    \textbf{Update value function: } $Q_{n+1}(x,a) = Q_{n}(x,a)$ for all $(x,a) \neq (X_n,A_n)$, and 
        \[
        Q_{n+1}(X_n,A_n)
            = Q_n(X_n,A_n) +  \rho^Q_{n,X_n,A_n}[f_{n+1} + \gamma\min_{a'\in\mathcal{A}}Q_n(X_{n+1},a')-Q_n(X_n,A_n)]
        \]
\ENDIF

\ENDFOR
\STATE {\textbf{Return}} $(\mu_{N_{steps}},\mu^{(x,a)}_{N_{steps}},Q_{N_{steps}})$
\end{algorithmic}
\end{algorithm}

Our algorithm uses  the transition distribution $p$ and the cost function $f$ (line 5), only through samples and evaluations respectively. In other words, the algorithm is \emph{model-free} from the point of view of the representative agent. Since it only uses samples and not expected values, such updates are sometimes referred to as \emph{sample-based updates}. Furthermore, at time-step $n$ in line 10, the value function $Q_{n+1}$ differs from $Q_{n}$ at only one state-action pair, so the algorithm is \emph{asynchronous}. 

Note  that the algorithm differentiates
the local population distributions $\mu_n^{(x,a)}$  updated on line 7 from the global population distribution  $\mu_n$  updated on line 8, by the choice of the learning rate parameters $\omega^{\tilde{\mu}}$ and $\omega^\mu$ satisfying  $\omega^{\tilde{\mu}}<\omega^Q<\omega^{\mu}$. In other words, the local distribution are updated faster than the Q-table providing the control at each time (MFC aspect), while the global distribution is update slower than the Q-table (in the spirit of MFG).

Observe that the choice of actions on lines 5 and 6 uses a $\softmin_\phi$ regularized version of $\argmin$ needed in the proof of convergence in Section \ref{sec:convergence}. We recall that for a positive parameter $\phi$,
\begin{align*}\label{softmin}
{\softmin}_\phi(z)=\left(\frac{e^{-\phi z_i}}{\sum_{j}e^{-\phi z_j}}\right)_{i=1,2,...,|\mathcal{A}|}
\quad \mbox{for}\quad z\in \mathbbm{R}^{|\mathcal{A}|}.
\end{align*}

\subsection{Synchronous Algorithm with Stochastic Approximation}\label{sec:syncalgo}

In this section we modify Algorithm \ref{algo:U3MFQL} in the a synchronous setting with stochastic approximation. This is an intermediate step between the practical setting (asynchronous and sample-based updates), and the ideal setting (synchronous and expectation-based updates). Let us assume that for any $(x,a,\mu,\tilde{\mu})$, the learner can know the value $f(x,a,\mu,\tilde{\mu})$. Furthermore, the learner can sample realizations of the random variables
\begin{align*}
    X^{'}_{x,\pi,\mu,\tilde{\mu}}\sim p(\cdot|x,\pi,\mu,\tilde{\mu}),
\end{align*}
where the action is sampled from a policy $\pi$.
Typically the policy $\pi$ will be $\softmin_\phi Q_n(X_n)$ or $\tilde{\pi}^{(x,a)}$ defined by
\begin{align}
    \tilde{\pi}^{(x,a)}(x') := \begin{cases}
        \delta_a\hspace{3mm} &\text{if}\hspace{5mm} x'=x, \\
        \softmin_\phi Q(X_n^{(x,a)})\quad &\text{for}\hspace{3mm} x'\neq x.
    \end{cases} 
\end{align}
Then, the learner has access to realizations of the following random variables $\widecheck{\mathcal{T}}_{\mu,Q,\tilde{\mu}}(x,a)$ and $\widecheck{\mathcal{P}}_{x,\pi}(\nu)$ taking values respectively in $\mathbbm{R}$ and $\Delta^{|\mathcal{X}|}$:
\begin{align*}
    \begin{cases}
\widecheck{\mathcal{T}}_{\mu,Q,\tilde\mu}(x,a) = f(x,a,\mu,\tilde\mu)+\gamma\min_{a'}Q(X^{'}_{x,a,\mu,\tilde\mu},a')-Q(x,a)\\
        (\widecheck{\mathcal{P}}_{x,\pi}(\nu)(x'))_{x'\in\mathcal{X}} = \left(\mathbbm{1}_{\{X^{'}_{x,\pi,\mu,\tilde\mu}=x'\}}-\nu(x')\right)_{x'\in\mathcal{X}}.
    \end{cases}
\end{align*}
Observe that 
\begin{align}
        \mathbb{E}[\widecheck{\mathcal{T}}_{\mu,Q,\tilde\mu}(x,a)] = \sum_{x''}p(x''|x,a,\mu,\tilde\mu)\left[f(x,a,\mu,\tilde\mu)+\gamma\min_{a'}Q(x'',a')-Q(x,a)\right] =: \mathcal{T}_3(\mu,Q,\tilde\mu)(x,a),\label{def:T3}
        \end{align}
        which defines the operator $\mathcal{T}_3$.
  Likewise, we have      
        \begin{align*}
            \mathbb{E}[\widecheck{\mathcal{P}}_{x,\pi}(\nu)(x')]=\sum_{x''}p(x''|x,\pi,\mu,\tilde\mu)(\mathbbm{1}_{\{x''=x'\}}-\nu(x')) = p(x'|x,\pi,\mu,\tilde\mu) - \nu(x'), 
\end{align*}
so that
\begin{align}
    \mathbb{E}[\widecheck{\mathcal 
 {P}}_{X,{\softmin}_\phi Q(X)}(\mu)(x')] &=\sum_{x}\mu(x)\sum_{x''}p(x''|x,
{\softmin}_\phi Q(x),\mu,\tilde\mu)(\mathbbm{1}_{\{x''=x'\}}-\mu(x'))\nonumber\\
    &=:\mathcal{P}_3(\mu,Q,\tilde\mu)(x'), \label{eq:exp-Pcheck-P}\\
    \mathbb{E}[\widecheck{\mathcal 
 {P}}_{X,\tilde\pi(X)}(\mu^{(x,a)})(y)] &=\sum_{x'\neq x}\mu^{(x,a)}(x')\sum_{x''}p(x''|x',{\softmin}_\phi Q(x'),\mu,\mu^{(x,a)})(\mathbbm{1}_{\{x''=y\}}-\mu^{(x,a)}(y))\nonumber\\
    &\quad + \mu^{(x,a)}(x)\sum_{x''}p(x''|x,a,\mu,\mu^{(x,a)})(\mathbbm{1}_{\{x''=y\}}-\mu^{(x,a)}(y))\nonumber\\
    &=\sum_{x'\neq x}\mu^{(x,a)}(x')p(y|x',{\softmin}_\phi Q(x'),\mu,\mu^{(x,a)}) +\mu^{(x,a)}(x)p(y|x,a,\mu,\mu^{(x,a)}) - \mu^{(x,a)}(y)\nonumber\\
    &=:\mathcal{\tilde P}^{(x,a)}_3(\mu,Q,\mu^{(x,a)})(y), \label{eq:mfc-Pcheck-P}
\end{align}
which defines $\mathcal{P}_3(\mu,Q,\tilde\mu)$ in \eqref{eq:exp-Pcheck-P} and 
$\mathcal{\tilde P}^{(x,a)}_3(\mu,Q,\mu^{(x,a)})$ in \eqref{eq:mfc-Pcheck-P}.

Our {\bf synchronous algorithm} is obtained from Algorithm  \ref{algo:U3MFQL} by replacing:
\begin{itemize}
    \item\label{item1}
line 7 by 
$\mu^{(x,a)}_{n+1}
        = \mu^{(x,a)}_n +  \rho^{\tilde\mu}_{n}\widecheck{\mathcal 
 {P}}_{X^{(x,a)}_n,\tilde\pi^{(x,a)}(X^{(x,a)}_n)}(\mu^{(x,a)})$,
  \item \label{item2} 
line 8 by
$\mu_{n+1} = \mu_{n} + \rho^{\mu}_{n} \widecheck{\mathcal 
 {P}}_{X_n,{\softmin}_\phi Q_n(X_n)}(\mu)$,
\item\label{item3}
and line 10 by 
 $Q_{n+1}(x,a)
        = Q_n(x,a) +  \rho^Q_{n}\widecheck{\mathcal{T}}_{\mu_n,Q_n,\mu_n^{(x,a)}}(x,a)$
\end{itemize}
using the deterministic updating rates:
\begin{align}\label{deterministicrates} 
    \rho^{\mu}_{n}=\frac{1}{(1+n)^{\omega^{\mu}}}\ll
    \rho^Q_{n}=\frac{1}{(1+n)^{\omega^Q}}\ll \rho^{\tilde{\mu}}_{n}=\frac{1}{(1+n)^{\omega^{\tilde{\mu}}}},
\end{align}
where $\omega^\mu$, $\omega^Q$ and $\omega^{\tilde\mu}$ are the same parameters as before
such that $\frac{1}{2}<\omega^{\tilde{\mu}}<\omega^Q<\omega^{\mu}<1$. Then our algorithm becomes synchronous as the Q-table is updated at all entries $(x,a)$ at each time $n$. There is still a stochastic approximation which will be handled in the proof of convergence in Section \ref{sec:convergence} by the using the martingale difference property of the differences
\begin{align}\label{mdifference}
\begin{cases}
    \mathbf{P}^{(x,a)}_n&:=\widecheck{\mathcal 
 {P}}_{X^{(x,a)}_n,\tilde\pi^{(x,a)}(X^{(x,a)}_n)}(\mu^{(x,a)})
 -\mathcal{\tilde P}^{(x,a)}_3(\mu_n,Q_n,\mu_n^{(x,a)}), 
\\ 
\mathbf{P}_n&:= \widecheck{\mathcal 
 {P}}_{X_n,{\softmin}_\phi Q_n(X_n)}(\mu)
 -\mathcal{P}_3(\mu_n,Q_n,\mu^{(x,a)}_n), 
\\ 
\mathbf{T}_n(x,a)&:=\widecheck{\mathcal{T}}_{\mu_n,Q_n,\mu^{
(x,a)}_n}(x,a)
-\mathcal{T}_3(\mu_n,Q_n,\mu^{(x,a)}_n)(x,a).
\end{cases}
\end{align}

\subsection{Idealized Three-timescale Algorithm}\label{sec:idealalgo}

Finally, we remove the stochastic approximation step by using expectations of the random terms in the synchronous algorithm. More precisely, in the first item in Section \ref{sec:syncalgo},
$\widecheck{\mathcal 
 {P}}_{X^{(x,a)}_n,\tilde\pi^{(x,a)}(X^{(x,a)}_n)}(\mu^{(x,a)})$ is replaced by 
 $\mathcal{\tilde P}^{(x,a)}_3(\mu_n,Q_n,\mu_n^{(x,a)})$, in the second item 
$ \widecheck{\mathcal 
 {P}}_{X_n,\softmin Q_n(X_n)}(\mu)$
 is replaced by $\mathcal{P}_3(\mu_n,Q_n,\mu_n^{(x,a)})$, and in the third item, 
$\widecheck{\mathcal{T}}_{\mu_n,Q_n,\mu^{(x,a)}_n}(x,a)$ is replaced by
$\mathcal{T}_3(\mu_n,Q_n,\mu_n^{(x,a)})(x,a)$.
That leads to the system of iterations:
\begin{align}
    \begin{cases}
    \label{DEs}
    \mu^{(x,a)}_{n+1} = \mu^{(x,a)}_n + \rho_n^{\tilde\mu}\mathcal{\tilde P}^{(x,a)}_3(\mu_n,Q_n,\mu_n^{(x,a)})\\
        \mu_{n+1} = \mu_n + \rho_n^{\mu}\mathcal{P}_3(\mu_n, Q_n,\mu^{(x,a)}_n)\\
        Q_{n+1}(x,a) = Q_n(x,a) + \rho^Q_n\mathcal{T}_3(\mu_n, Q_n,\mu^{(x,a)}_n)
        \end{cases}
\end{align}
where
\begin{align}
    \begin{cases}
    \label{def:P3T3old}
\tilde{\mathcal{P}}^{(x,a)}_3(\mu,Q,\tilde\mu)(y) 
    &:= \sum_{x'\neq x}\tilde\mu(x')p(y|x',{\softmin}_\phi Q(x'),\mu,\tilde\mu) +\tilde\mu(x)p(y|x,a,\mu,\tilde\mu) - \tilde\mu(y)
    \\
    \mathcal{P}_3(\mu, Q,\tilde\mu)(y)
    &:=\sum_{x'}\mu(x')p(y|x',{\softmin}_\phi Q(x'),\mu,\tilde\mu)-\mu(y)
    \\
\mathcal{T}_3(\mu,Q,\tilde\mu)(x,a) &:= f(x,a,\mu,\tilde\mu) + \gamma\sum_{x'}p(x'|x,a,\mu.\tilde\mu)\min_{a'}Q(x',a')-Q(x,a),
    \end{cases}
\end{align}
Note that the first two equations in \eqref{def:P3T3old} can be rewritten
\begin{align}
    \begin{cases}
    \label{def:P3T3bis}
\tilde{\mathcal{P}}^{(x,a)}_3(\mu,Q,\tilde\mu)(y) &=
    \tilde\mu \mathrm{\tilde P}_{(x,a)}^{\softmin_\phi Q, \mu,\tilde\mu}(y)-\tilde\mu(y),
    \\
    \mathcal{P}_3(\mu, Q,\tilde\mu)(y)
    &=\mu\mathrm{ P}^{\softmin_\phi Q, \mu,\tilde\mu}(y)-\mu(y),
    \end{cases}
\end{align}
where $\mathrm{P}^{\pi,\mu,\tilde\mu} $ and $\mathrm{\tilde P}_{(x,a)}^{\pi, \mu,\tilde\mu}$
are defined by
\begin{align}
    \begin{cases}
    \label{def:P3T3}
\mu'\mathrm{P}^{\pi,\mu,\tilde\mu}(y)=\sum_{x'\in \mathcal{X}} \mu'(x') \sum_{a\in \mathcal{A}} \pi(a|x') p(y|x',a,\mu,\tilde\mu), \quad y \in \mathcal{X}
    \\
 \mu' \mathrm{\tilde P}_{(x,a)}^{\pi, \mu,\tilde\mu}(y)=
 \sum_{x'\neq x}\mu'(x')p(y|x',\pi,\mu,\tilde\mu) +\mu'(x)p(y|x,a,\mu,\tilde\mu)
 , \quad y \in \mathcal{X}.
    \end{cases}
\end{align}

Now, following \cite{Borkar97} in the two-timescale case, we consider the following three-timescale ODE system which tracks the system \eqref{DEs}:
\begin{align}
\label{ODEs}
    \begin{cases}
    \dot{\mu}_t = \mathcal{P}_3(\mu_t,Q_t,\mu^{(x,a)}_t)\\
    \dot{Q}_t(x,a) = \frac{1}{\epsilon}\mathcal{T}_3(\mu_t,Q_t,\mu^{(x,a)}_t)\\
    \dot{\mu}^{(x,a)}_t = \frac{1}{\epsilon\tilde{\epsilon}}\tilde{\mathcal{P}}^{(x,a)}_3(\mu_t,Q_t,\mu^{(x,a)}_t),
\end{cases}
\end{align} 
where the small parameters $\epsilon$ and $\tilde{\epsilon}$ represents the ratios $\rho_n^\mu/\rho_n^Q$ and $\rho_n^Q/\rho_n^{\tilde\mu}$ respectively. 
This is what we call the {\bf idealized algorithm}, because it cannot be implemented directly in general since the expectations cannot be computed exactly.

\section{Convergence: Three-timescale Approach  }\label{sec:convergence}

As announced in Section \ref{sec:algos}, we now proceed with the proof of convergence starting with convergence of the ideal deterministic algorithm by using a three-timescale generalization of two-timescale results in \cite{Borkar97}, and then successively restoring the stochastic approximation  and the asynchronous features by controlling the corresponding errors.

\subsection{Convergence of the Idealized Three-timescale Algorithm}
In this section, we  establish the convergence of the idealized three-timescale algorithm defined in \eqref{DEs}. In order to do so, we will introduce several assumptions, the first of which is the following regarding regularity of the functions $f$ and $p$.

\begin{assumption}\label{mfcglip}
    The cost function $f$ is bounded and is Lipschitz with respect to $\mu$ and $\mu'$, with Lipschitz constants denoted by $L_{f}^{glob}$ and $L_{f}^{loc}$ respectively, when using the $L^1$ norm.
    The transition kernel $p(\cdot|\cdot,\cdot,\mu,\mu')$ is also Lipschitz with respect to $\mu$ and $\mu'$, with Lipschitz constants denoted by $L_{p}^{glob}$ and $L_{p}^{loc}$ respectively, when using the $L^1$ norm. In other words, for every $x,a,\mu,\mu', \tilde\mu,\tilde\mu'$, we have:
    \begin{align*}
        |f(x,a,\mu,\tilde\mu) - f(x,a,\mu',\tilde\mu)| 
        &\le L_{f}^{glob} \|\mu - \mu'\|_1 = L_{f}^{glob} \sum_x |\mu(x) - \mu'(x)|,
        \\ 
        |f(x,a,\mu,\tilde\mu) - f(x,a,\mu,\tilde\mu')| 
        &\le L_{f}^{loc} \|\tilde\mu - \tilde\mu'\|_1 = L_{f}^{loc} \sum_x |\tilde\mu(x) - \tilde\mu'(x)|,
        \\ 
        \|p(\cdot|x,a,\mu,\tilde\mu)-p(\cdot|x,a,\mu',\tilde\mu)\|_1 
        &\le L_{p}^{glob} \|\mu - \mu'\|_1 = L_{p}^{glob} \sum_x |\mu(x) - \mu'(x)|,
        \\
        \|p(\cdot|x,a,\mu,\tilde\mu)-p(\cdot|x,a,\mu,\tilde\mu')\|_1 
        &\le L_{p}^{loc} \|\tilde\mu - \tilde\mu'\|_1 = L_{p}^{loc} \sum_x |\tilde\mu(x) - \tilde\mu'(x)|.
    \end{align*}
We will denote $L_p^{max}:=\max(L_p^{glob},L_p^{loc})$. 
\end{assumption}
Under Assumption \ref{mfcglip}, %
we derive in the following propositions, the Lipschitz properties with respect to $\mu,Q,\tilde\mu$ of the functions $\mathcal{P}_3,\tilde{\mathcal{P}}^{(x,a)}_3$, and $\mathcal{T}_3$
defined in \eqref{def:P3T3old}.

First, we introduce the notations:
 \begin{align}
            c_{min}^\phi:= &\min_{x,x',Q,\mu,\tilde\mu} \mathrm{P}^{\softmin_\phi Q,\mu,\tilde\mu}(x',x) = \min_{x,x',Q,\mu,\tilde\mu}\sum_a {\softmin}_{\phi} Q(x',a)p(x|x',a,\mu,\tilde\mu),
            \\
        c_{min} :=& \min_{x,x',a,\mu,\tilde\mu} p(x|x',a,\mu,\tilde\mu)\sum_a {\softmin}_{\phi} Q(x',a) =\min_{x,x',a,\mu,\tilde\mu} p(x|x',a,\mu,\tilde\mu). \label{def:cmin}
\end{align}
Note that $c_{min} \le c_{min}^\phi$. 
\begin{proposition}\label{PLipschitz}
If Assumption~\ref{mfcglip} holds, then
    the function $(\mu,Q,\tilde\mu)\mapsto\mathcal{P}_3(\mu,Q,\tilde\mu)$ is Lipschitz and more precisely: %
\begin{align}\label{LipP3Q}
        \|\mathcal{P}_3(\mu,Q,\tilde\mu) - \mathcal{P}_3(\mu, Q,\tilde\mu')\|_{\infty}
\leq L^{loc}_p\|\tilde\mu-\tilde\mu'\|_1,
\end{align}
\begin{align}\label{LipP3Q}
        \|\mathcal{P}_3(\mu,Q,\tilde\mu) - \mathcal{P}_3(\mu, Q',\tilde\mu)\|_{\infty}
\leq \phi|\mathcal{A}|\|Q-Q'\|_\infty,
\end{align}
     and
     \begin{align}\label{LipP3mu}
        \|\mathcal{P}_3(\mu,Q,\tilde\mu) - \mathcal{P}_3(\mu',Q,\tilde\mu)\|_{\infty}
         \leq (L_p^{glob} + 2-|\mathcal{X}|c_{min}^\phi)\|\mu-\mu'\|_1.
    \end{align}

    \end{proposition}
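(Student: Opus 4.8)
The plan is to prove each of the three Lipschitz bounds in Proposition~\ref{PLipschitz} separately, starting from the explicit formula
$$\mathcal{P}_3(\mu,Q,\tilde\mu)(y)=\sum_{x'}\mu(x')\sum_a ({\softmin}_\phi Q)(x',a)\,p(y|x',a,\mu,\tilde\mu)-\mu(y),$$
and estimating the difference $\mathcal{P}_3(\dots)(y)-\mathcal{P}_3(\dots)(y)$ pointwise in $y$ before taking the supremum over $y$. For the first inequality \eqref{LipP3Q} (the $\tilde\mu$-dependence), only the kernel $p$ depends on $\tilde\mu$, so $\mathcal{P}_3(\mu,Q,\tilde\mu)(y)-\mathcal{P}_3(\mu,Q,\tilde\mu')(y)=\sum_{x'}\mu(x')\sum_a({\softmin}_\phi Q)(x',a)\big(p(y|x',a,\mu,\tilde\mu)-p(y|x',a,\mu,\tilde\mu')\big)$. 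I would bound $|p(y|x',a,\mu,\tilde\mu)-p(y|x',a,\mu,\tilde\mu')|$ by the full $L^1$ distance $\|p(\cdot|x',a,\mu,\tilde\mu)-p(\cdot|x',a,\mu,\tilde\mu')\|_1\le L_p^{loc}\|\tilde\mu-\tilde\mu'\|_1$ from Assumption~\ref{mfcglip}; since $\sum_a({\softmin}_\phi Q)(x',a)=1$ and $\sum_{x'}\mu(x')=1$, the weights collapse and we are left with $L_p^{loc}\|\tilde\mu-\tilde\mu'\|_1$. A subtle point: bounding each coordinate $|p(y|\cdots)-p(y|\cdots)|$ by the $L^1$ norm of the whole vector is wasteful per-coordinate but is exactly what is needed to get the stated $\|\cdot\|_\infty$ bound, so I would be careful to note that the supremum over $y$ is taken \emph{after} this step, not summed.

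For the third inequality \eqref{LipP3mu} (the $\mu$-dependence), the difference splits into three pieces by adding and subtracting intermediate terms: one where only the outer weight $\mu(x')$ changes, one where only the kernel's $\mu$-argument changes, and the stand-alone $-(\mu(y)-\mu'(y))$ term. The middle piece is handled as above and contributes $L_p^{glob}\|\mu-\mu'\|_1$. The last term contributes $|\mu(y)-\mu'(y)|\le\|\mu-\mu'\|_1$. The first piece, $\sum_{x'}(\mu(x')-\mu'(x'))\sum_a({\softmin}_\phi Q)(x',a)p(y|x',a,\mu,\tilde\mu)$, is where the constant $c_{min}^\phi$ enters: writing $k(x'):=\sum_a({\softmin}_\phi Q)(x',a)p(y|x',a,\mu,\tilde\mu)=\mathrm{P}^{{\softmin}_\phi Q,\mu,\tilde\mu}(x',y)$, this is $\sum_{x'}(\mu(x')-\mu'(x'))k(x')$ where by definition $k(x')\ge c_{min}^\phi$ and also $k(x')\le 1$. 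The trick is that $\sum_{x'}(\mu(x')-\mu'(x'))=0$, so we may subtract any constant $c$ from $k$: $\big|\sum_{x'}(\mu(x')-\mu'(x'))(k(x')-c)\big|\le\max_{x'}|k(x')-c|\cdot\|\mu-\mu'\|_1$, and choosing $c=(1+c_{min}^\phi)/2$ gives $\max_{x'}|k(x')-c|\le(1-c_{min}^\phi)/2$. This would yield $(1-c_{min}^\phi)/2$ rather than the stated $2-|\mathcal{X}|c_{min}^\phi$; to match the paper's constant I instead expect they use the cruder bound that keeps $y$-summed structure, namely they likely bound $\|\mathrm{P}^{{\softmin}_\phi Q,\mu,\tilde\mu}-\mathbbm{1}\otimes(c^\phi_{min}\text{-floor})\|$ in a way producing $2-|\mathcal{X}|c_{min}^\phi$; so I would reconstruct the argument to arrive at exactly that factor, most plausibly by writing $\sum_{x'}(\mu-\mu')(x')k(x')$ with $k$ having all entries in $[c_{min}^\phi,1]$ and using $\|k-\text{(const)}\|_\infty\le 1-c_{min}^\phi$ together with a $\|\mu-\mu'\|_1\le 2$-type slack, adding the $+2$ or the $|\mathcal{X}|c_{min}^\phi$ from combining the three pieces.

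For the second inequality (the $Q$-dependence), only ${\softmin}_\phi Q$ changes, so the difference is $\sum_{x'}\mu(x')\sum_a\big(({\softmin}_\phi Q)(x',a)-({\softmin}_\phi Q')(x',a)\big)p(y|x',a,\mu,\tilde\mu)$. I would invoke the Lipschitz property of the softmin map: ${\softmin}_\phi$ is $\phi$-Lipschitz from $(\mathbbm{R}^{|\mathcal{A}|},\|\cdot\|_\infty)$ to $(\mathbbm{R}^{|\mathcal{A}|},\|\cdot\|_1)$ (a standard fact, since the Jacobian of the softmax has operator norm controlled by $\phi$), hence $\sum_a|({\softmin}_\phi Q)(x',a)-({\softmin}_\phi Q')(x',a)|\le\phi\|Q(x',\cdot)-Q'(x',\cdot)\|_\infty\le\phi\|Q-Q'\|_\infty$; bounding each $p(y|\cdots)\le 1$ and using $\sum_{x'}\mu(x')=1$ gives a bound of $\phi\|Q-Q'\|_\infty$. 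The paper states $\phi|\mathcal{A}|\|Q-Q'\|_\infty$, so they evidently use the weaker per-coordinate softmin Lipschitz estimate $|({\softmin}_\phi Q)(x',a)-({\softmin}_\phi Q')(x',a)|\le\phi\|Q-Q'\|_\infty$ for each $a$ and then sum the $|\mathcal{A}|$ actions, which is the route I would actually take to land on their constant. The main obstacle I anticipate is purely bookkeeping: getting the constants to match exactly the claimed $2-|\mathcal{X}|c_{min}^\phi$ and $\phi|\mathcal{A}|$ requires choosing the ``right'' (i.e., deliberately non-optimal) intermediate estimates, so I would organize the proof around three clean add-and-subtract decompositions and be explicit at each step about whether I am using an $L^1$ kernel bound, the softmin Lipschitz constant, or the zero-sum property of $\mu-\mu'$, rather than chasing the sharpest possible inequality.
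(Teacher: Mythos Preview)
Your overall add-and-subtract skeleton is correct and matches the paper's, but two of the three sub-arguments you propose take a genuinely different route from the paper.

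For the $Q$-dependence, the paper does \emph{not} use the $\ell^\infty\!\to\!\ell^1$ Lipschitz property of $\softmin_\phi$ nor the per-coordinate bound you suggest. Instead it first passes to an $\ell^1$ bound (summing over $x$), applies Cauchy--Schwarz to get a factor $|\mathcal{A}|^{1/2}$ times $\|\softmin_\phi Q(x')-\softmin_\phi Q'(x')\|_2$, then invokes the $\ell^2\!\to\!\ell^2$ Lipschitz estimate $\|\softmin_\phi z-\softmin_\phi z'\|_2\le\phi\|z-z'\|_2$, and finally uses $\|Q(x',\cdot)-Q'(x',\cdot)\|_2\le|\mathcal{A}|^{1/2}\|Q-Q'\|_\infty$. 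Your route is simpler and yields the same (or the sharper $\phi$) constant; the paper's route is the one that explains why the $|\mathcal{A}|$ appears as $|\mathcal{A}|^{1/2}\cdot|\mathcal{A}|^{1/2}$.

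For the $\mu$-dependence, your zero-sum/centering argument on a single column is correct and in fact gives the sharper pointwise factor $(1-c_{min}^\phi)/2$, but it is not how the paper obtains $2-|\mathcal{X}|c_{min}^\phi$. The paper first bounds $\|\cdot\|_\infty\le\|\cdot\|_1$ and then uses a Dobrushin/Doeblin contraction on the full $\ell^1$ norm: since every entry of $\mathrm{P}^{\softmin_\phi Q,\mu,\tilde\mu}$ is at least $c_{min}^\phi$, one writes $\mathrm{P}=(1-|\mathcal{X}|c_{min}^\phi)\,\mathrm{Q}+c_{min}^\phi J$ with $J$ the all-ones matrix, whence $\|\mu\mathrm{P}-\mu'\mathrm{P}\|_1\le(1-|\mathcal{X}|c_{min}^\phi)\|\mu-\mu'\|_1$. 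Adding the $L_p^{glob}$ term (change of the kernel's $\mu$-argument) and the $\|\mu-\mu'\|_1$ from the stand-alone $-\mu(y)$ gives exactly $L_p^{glob}+2-|\mathcal{X}|c_{min}^\phi$. So the constant you were trying to reverse-engineer comes from this Doeblin decomposition on the $\ell^1$ scale, not from a per-column slack; your speculation about ``$\|\mu-\mu'\|_1\le 2$-type slack'' is off the mark here.
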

\begin{proof}
We have:
\begin{align*}
        \|\mathcal{P}_3(\mu,Q,\tilde\mu) - \mathcal{P}_3(\mu,Q,\tilde\mu')\|_{\infty} &\leq \|\mu\mathrm{P}^{\softmin_\phi Q, \mu, \tilde\mu} - \mu \mathrm{P}^{\softmin_\phi Q, \mu, \tilde\mu'}\|_\infty \\
        &\leq \|\mu\mathrm{P}^{\softmin_\phi Q, \mu, \tilde\mu} - \mu\mathrm{P}^{\softmin_\phi Q, \mu, \tilde\mu'})\|_1\\
        &\leq L_p^{loc} \|\tilde\mu-\tilde\mu'\|_1 .
\end{align*}
Moreover, 
\begin{align*}
        & \|\mathcal{P}_3(\mu,Q,\tilde\mu) - \mathcal{P}_3(\mu,Q',\tilde\mu)\|_{\infty} 
        \\
        & = \|\mu \mathrm{P}^{\softmin_\phi Q, \mu, \tilde\mu} - \mu \mathrm{P}^{\softmin_\phi Q', \mu, \tilde\mu}\|_\infty
        \\
        &\leq\sum_{x}\left|\sum_{x'} \mu(x')\sum_a(({\softmin}_\phi Q(x'))(a)p(x|x',a,\mu,\tilde\mu)-({\softmin}_{\phi} Q'(x'))(a)p(x|x',a,\mu,\tilde\mu))\right|\\
        &\leq\sum_{x'} \mu(x')\left|\sum_a\sum_{x}(({\softmin}_\phi Q(x'))(a)p(x|x',a,\mu,\tilde\mu)-({\softmin}_{\phi} Q'(x'))(a)p(x|x',a,\mu,\tilde\mu))\right|
        \\
        &\leq \sum_{x'} \mu(x') \sqrt{\sum_a 1^2}\|{\softmin}_\phi Q(x')-{\softmin}_{\phi} Q'(x')\|_2\\
        &\leq\sum_{x'} \mu(x')|\mathcal{A}|^{\frac{1}{2}}\phi\|Q(x')-Q'(x')\|_2\\
        &\leq \phi|\mathcal{A}|\|Q-Q'\|_\infty.
\end{align*}    
Last, for the Lipschitz continuity with respect to $\mu$, we first make the following remark: we can find $p(i,j)>\epsilon$, so that $1-N\epsilon>0$, and then define $q(i,j)=(p(i,j)-\epsilon)/(1-N\epsilon)$. Thus, $P = (1-N\epsilon)Q+\epsilon J$, where $J$ is the $N\times N$ matrix with all entries 1. We use this fact to calculate the total variation. Hence we have: 
\begin{align*}
        \|\mathcal{P}_3(\mu,Q,\tilde\mu) - \mathcal{P}_3(\mu',Q,\tilde\mu)\|_{\infty} &\leq \|\mu\mathrm{P}^{\softmin_\phi Q, \mu, \tilde\mu} - \mu' \mathrm{P}^{\softmin_\phi Q, \mu', \tilde\mu}\|_\infty + \|\mu-\mu'\|_\infty \\
        &\leq \|\mu\mathrm{P}^{\softmin_\phi Q, \mu, \tilde\mu} - \mu'\mathrm{P}^{\softmin_\phi Q, \mu, \tilde\mu})\|_1 
        \\
        & \qquad + \|\mu'\mathrm{P}^{\softmin_\phi Q, \mu, \tilde\mu} - \mu'\mathrm{P}^{\softmin_\phi Q, \mu', \tilde\mu}\|_1+\|\mu-\mu'\|_1\\
        &\leq(1-|\mathcal{X}|c_{min}^\phi)\|\mu-\mu'\|_1 + L_p^{glob}\|\mu-\mu'\|_1+\|\mu-\mu'\|_1\\
        &\leq (L_p^{glob} + 2-|\mathcal{X}|c_{min}^\phi)\|\mu-\mu'\|_1.
\end{align*}
\end{proof}

\begin{proposition}\label{TPLipschitz}
If Assumption~\ref{mfcglip} holds, then for every $(x,a)$, 
    the function $(\mu,Q,\tilde\mu) \mapsto \mathcal{\tilde P}^{(x,a)}_3(\mu,Q,\tilde\mu)$ is Lipschitz and more precisely: %
        \begin{align}\label{LipTP3mu}
        \|\mathcal{\tilde P}^{(x,a)}_3(\mu,Q,\tilde\mu) - \mathcal{\tilde P}^{(x,a)}_3(\mu',Q,\tilde\mu)\|_{\infty}
    \leq L^{glob}_p \|\mu-\mu'\|_1,
    \end{align}
    \begin{align}\label{LipTP3Q}
        \|\mathcal{\tilde P}^{(x,a)}_3(\mu,Q,\tilde\mu) - \mathcal{\tilde P}^{(x,a)}_3(\mu,Q',\tilde\mu)\|_{\infty}
    \leq \phi|\mathcal{A}| \|Q-Q'\|_\infty,
    \end{align}
and
     \begin{align}\label{LipTP3tildemu}
        \|\mathcal{\tilde P}^{(x,a)}_3(\mu,Q,\tilde\mu) - \mathcal{\tilde P}^{(x,a)}_3(\mu,Q,\tilde\mu')\|_{\infty}
         \leq (L^{loc}_p + 2-|\mathcal{X}|c_{min}) \|\tilde\mu-\tilde\mu'\|_1.
    \end{align}
    \end{proposition}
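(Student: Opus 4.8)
The plan is to prove each of the three Lipschitz bounds in Proposition~\ref{TPLipschitz} separately, following the same strategy used in the proof of Proposition~\ref{PLipschitz} but keeping careful track of the fact that $\tilde{\mathcal P}^{(x,a)}_3$ treats the state $x$ differently from the others. Throughout I will use the representation~\eqref{def:P3T3bis}, namely $\tilde{\mathcal P}^{(x,a)}_3(\mu,Q,\tilde\mu)(y) = \tilde\mu\,\mathrm{\tilde P}_{(x,a)}^{\softmin_\phi Q,\mu,\tilde\mu}(y) - \tilde\mu(y)$, together with the explicit form of $\mathrm{\tilde P}_{(x,a)}$ in~\eqref{def:P3T3}, which splits into a sum over $x'\neq x$ governed by $\softmin_\phi Q(x')$ and a single term at $x'=x$ governed by the fixed action $a$.

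For~\eqref{LipTP3mu}, I would fix $Q$ and $\tilde\mu$ and note that the $-\tilde\mu(y)$ term cancels, so the difference reduces to $\tilde\mu(\mathrm{\tilde P}_{(x,a)}^{\softmin_\phi Q,\mu,\tilde\mu} - \mathrm{\tilde P}_{(x,a)}^{\softmin_\phi Q,\mu',\tilde\mu})$. Summing over $y$, moving the absolute value inside past the convex weights $\tilde\mu(x')$, and using that for each fixed $x'$ (whether $x'=x$ with action $a$, or $x'\neq x$ with the mixed policy $\softmin_\phi Q(x')$) the map $\mu\mapsto p(\cdot|x',\cdot,\mu,\tilde\mu)$ is $L_p^{glob}$-Lipschitz in $L^1$ by Assumption~\ref{mfcglip}, gives the bound $L_p^{glob}\|\mu-\mu'\|_1$. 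The key point is that the fixed action at $x'=x$ causes no trouble since the Lipschitz estimate on $p$ in Assumption~\ref{mfcglip} is uniform in $(x',a)$. The bound~\eqref{LipTP3Q} is essentially identical to the corresponding $Q$-estimate in Proposition~\ref{PLipschitz}: only the $x'\neq x$ terms depend on $Q$ (the $x'=x$ term uses the fixed action $a$), so the difference is controlled by $\sum_{x'\neq x}\tilde\mu(x')\|\softmin_\phi Q(x') - \softmin_\phi Q'(x')\|_1$, and the Lipschitz constant $\phi$ of $\softmin_\phi$ (together with $\sum_{x'\neq x}\tilde\mu(x')\le 1$ and $\|\cdot\|_1\le|\mathcal A|^{1/2}\|\cdot\|_2$ or the direct $\ell^1$ Lipschitz bound $\le \phi|\mathcal A|$ on $\softmin_\phi$) yields $\phi|\mathcal A|\|Q-Q'\|_\infty$.

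The estimate~\eqref{LipTP3tildemu} is the one that requires the contraction-type argument, and it is the main obstacle because $\tilde\mu$ now appears in three places: as the measure being pushed forward, inside $p(\cdot|\cdot,\cdot,\mu,\tilde\mu)$, and in the subtracted term $-\tilde\mu(y)$. I would split $\tilde{\mathcal P}^{(x,a)}_3(\mu,Q,\tilde\mu) - \tilde{\mathcal P}^{(x,a)}_3(\mu,Q,\tilde\mu')$ into three pieces: (i) $(\tilde\mu - \tilde\mu')\mathrm{\tilde P}_{(x,a)}^{\softmin_\phi Q,\mu,\tilde\mu}$, (ii) $\tilde\mu'(\mathrm{\tilde P}_{(x,a)}^{\softmin_\phi Q,\mu,\tilde\mu} - \mathrm{\tilde P}_{(x,a)}^{\softmin_\phi Q,\mu,\tilde\mu'})$, and (iii) $-(\tilde\mu - \tilde\mu')$. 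Piece (ii) contributes $L_p^{loc}\|\tilde\mu-\tilde\mu'\|_1$ by the $L^1$-Lipschitz property of $p$ in its last argument, and piece (iii) contributes $\|\tilde\mu-\tilde\mu'\|_1$. For piece (i), I would observe that $\mathrm{\tilde P}_{(x,a)}^{\pi,\mu,\tilde\mu}$ is a stochastic matrix each of whose entries is bounded below by $c_{min}$ (the uniform lower bound on $p$, which applies to both the $x'=x$ row using action $a$ and the $x'\neq x$ rows using $\softmin_\phi Q$, since $c_{min}$ from~\eqref{def:cmin} is a min over all actions), so the decomposition $\mathrm{\tilde P}_{(x,a)} = (1-|\mathcal X|c_{min})\,\mathrm{Q}' + c_{min}\,J$ used in the proof of Proposition~\ref{PLipschitz} applies and gives $\|(\tilde\mu-\tilde\mu')\mathrm{\tilde P}_{(x,a)}\|_1 \le (1-|\mathcal X|c_{min})\|\tilde\mu-\tilde\mu'\|_1$ (the $J$ part annihilates the signed measure $\tilde\mu-\tilde\mu'$ since its coordinates sum to zero). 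Adding the three contributions gives $(1-|\mathcal X|c_{min}) + L_p^{loc} + 1 = L_p^{loc} + 2 - |\mathcal X|c_{min}$ times $\|\tilde\mu-\tilde\mu'\|_1$, which is exactly~\eqref{LipTP3tildemu}. The one subtlety to check carefully is that the lower bound $c_{min}$ rather than $c_{min}^\phi$ is the right constant here: because the $x'=x$ row of $\mathrm{\tilde P}_{(x,a)}$ uses the deterministic action $a$, its entries are exactly $p(y|x,a,\mu,\tilde\mu)\ge c_{min}$, whereas the other rows are convex combinations $\sum_{a'}\softmin_\phi Q(x')(a')\,p(y|x',a',\mu,\tilde\mu)\ge c_{min}^\phi \ge c_{min}$, so $c_{min}$ is a valid uniform lower bound for the whole matrix, which is why it (and not $c_{min}^\phi$) appears in the final constant.
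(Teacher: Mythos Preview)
Your proposal is correct and follows essentially the same strategy as the paper: use the representation $\tilde{\mathcal P}^{(x,a)}_3(\mu,Q,\tilde\mu)=\tilde\mu\,\mathrm{\tilde P}_{(x,a)}^{\softmin_\phi Q,\mu,\tilde\mu}-\tilde\mu$, cancel the $-\tilde\mu$ term for~\eqref{LipTP3mu} and~\eqref{LipTP3Q}, and for~\eqref{LipTP3tildemu} split into the ``push-forward'', ``kernel'', and ``$-\tilde\mu$'' pieces, invoking the stochastic-matrix contraction $P=(1-|\mathcal X|c_{min})Q'+c_{min}J$ on the first. Your treatment is in fact more explicit than the paper's on two points: for~\eqref{LipTP3tildemu} the paper passes from $\mathrm{\tilde P}_{(x,a)}$ to $\mathrm P$ in one line without justification (whereas you work directly with $\mathrm{\tilde P}_{(x,a)}$ and verify that every row, including the deterministic $x'=x$ row, has entries $\ge c_{min}$), and you correctly identify that this deterministic row is exactly why $c_{min}$ rather than $c_{min}^\phi$ appears in the final constant.
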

\begin{proof}
Let us fix $(x,a)$. First, we have
\begin{align*}
        \|\mathcal{\tilde P}^{(x,a)}_3(\mu,Q,\tilde\mu) - \mathcal{\tilde P}^{(x,a)}_3(\mu',Q,\tilde\mu)\|_{\infty} 
        &\leq \|\tilde\mu\mathrm{\tilde P}_{(x,a)}^{\softmin_\phi Q, \mu, \tilde\mu} - \tilde\mu\mathrm{\tilde P}_{(x,a)}^{\softmin_\phi Q, \mu', \tilde\mu}\|_\infty 
        \\
        &\leq \|\sum_{x'\neq x}\tilde\mu(x')p(\cdot|x',{\softmin}_\phi Q(x'),\mu,\tilde\mu) +\tilde\mu(x)p(\cdot|x,a,\mu,\tilde\mu) 
        \\
        &\qquad - \sum_{x'\neq x}\tilde\mu(x')p(\cdot|x',{\softmin}_\phi Q(x'),\mu',\tilde\mu) +\tilde\mu(x)p(\cdot|x,a,\mu',\tilde\mu)\|_\infty 
        \\
        &\leq \sum_{x'}\mu(x')\|\max_{x',a'} p(\cdot|x',a',\mu,\tilde\mu) - p(\cdot|x',a',\mu',\tilde\mu)\|_\infty 
        \\
        &\leq L^{glob}_p \|\mu-\mu'\|_1.
\end{align*}    
Second, we have:
\begin{align*}
        \|\mathcal{\tilde P}^{(x,a)}_3(\mu,Q,\tilde\mu) - \mathcal{\tilde P}^{(x,a)}_3(\mu,Q',\tilde\mu)\|_{\infty} 
        &= \|\mu\mathrm{\tilde P}_{(x,a)}^{\softmin_\phi Q, \mu, \tilde\mu} - \mu\mathrm{\tilde P}_{(x,a)}^{\softmin_\phi Q', \mu, \tilde\mu}\|_\infty
        \\
        &\leq \|\sum_{x'\neq x}\tilde\mu(x')p(\cdot|x',{\softmin}_\phi Q(x'),\mu,\tilde\mu) +\tilde\mu(x)p(\cdot|x,a,\mu,\tilde\mu) 
        \\
        &\qquad - \sum_{x'\neq x}\tilde\mu(x')p(\cdot|x',{\softmin}_\phi Q'(x'),\mu,\tilde\mu) +\tilde\mu(x)p(\cdot|x,a,\mu,\tilde\mu)\|_\infty 
        \\
        &\leq\|\mu\mathrm{P}^{\softmin_\phi Q, \mu, \tilde\mu} - \mu\mathrm{P}^{\softmin_\phi Q', \mu, \tilde\mu}\|_{1}\\
        &\leq \phi|\mathcal{A}| \|Q-Q'\|_\infty. 
\end{align*}    
 Last, 
\begin{align*}
        \|\mathcal{\tilde P}^{(x,a)}_3(\mu,Q,\tilde\mu) - \mathcal{\tilde P}^{(x,a)}_3(\mu,Q,\tilde\mu')\|_{\infty} 
        &\leq\|\tilde\mu\mathrm{\tilde P}_{(x,a)}^{\softmin_\phi Q, \mu, \tilde\mu} - \tilde\mu'\mathrm{\tilde P}_{(x,a)}^{\softmin_\phi Q, \mu, \tilde\mu'}\|_\infty + \|\tilde\mu-\tilde\mu'\|_\infty \\
        &\leq\|\tilde\mu\mathrm{\tilde P}_{(x,a)}^{\softmin_\phi Q, \mu, \tilde\mu} - \tilde\mu'\mathrm{\tilde P}_{(x,a)}^{\softmin_\phi Q, \mu, \tilde\mu'}\|_{1} + \|\tilde\mu-\tilde\mu'\|_{1}\\
        &\leq\|\tilde\mu\mathrm{P}^{\softmin_\phi Q, \mu, \tilde\mu} - \tilde\mu'\mathrm{P}^{\softmin_\phi Q, \mu, \tilde\mu'}\|_{1} + \|\tilde\mu-\tilde\mu'\|_{1}\\
        &\leq (L^{loc}_p + 2-|\mathcal{X}|c_{min})\|\tilde\mu-\tilde\mu'\|_1.
\end{align*}
\end{proof}

\begin{proposition}\label{TLipschitz}
    If Assumption~\ref{mfcglip} holds,
    then the function $(\mu,Q,\tilde\mu) \mapsto \mathcal{T}_3(\mu,Q,\tilde\mu)$ is Lipschitz. %
\end{proposition}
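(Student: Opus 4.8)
The plan is to prove that $\mathcal{T}_3$ is Lipschitz by establishing a Lipschitz estimate separately in each of its three arguments $\mu$, $Q$, and $\tilde\mu$, exactly as in Propositions~\ref{PLipschitz} and~\ref{TPLipschitz}; since there are only three arguments, the triangle inequality then upgrades the three one-variable bounds to joint Lipschitz continuity. Throughout, the output of $\mathcal{T}_3$ and the $Q$-variable are measured in $\|\cdot\|_\infty$ and the distribution variables in $\|\cdot\|_1$.

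For the dependence on $Q$: the term $f(x,a,\mu,\tilde\mu)$ is independent of $Q$, the term $-Q(x,a)$ contributes at most $\|Q-Q'\|_\infty$, and since $|\min_{a'}Q(x',a')-\min_{a'}Q'(x',a')|\le\|Q-Q'\|_\infty$ and $\sum_{x'}p(x'|x,a,\mu,\tilde\mu)=1$, the middle term contributes at most $\gamma\|Q-Q'\|_\infty$. This yields $\|\mathcal{T}_3(\mu,Q,\tilde\mu)-\mathcal{T}_3(\mu,Q',\tilde\mu)\|_\infty\le(1+\gamma)\|Q-Q'\|_\infty$.

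For the dependence on $\mu$ (and symmetrically on $\tilde\mu$), I would split the difference $\mathcal{T}_3(\mu,Q,\tilde\mu)(x,a)-\mathcal{T}_3(\mu',Q,\tilde\mu)(x,a)$ into the $f$-increment, bounded by $L_f^{glob}\|\mu-\mu'\|_1$ via Assumption~\ref{mfcglip}, and the $p$-increment $\gamma\sum_{x'}\big(p(x'|x,a,\mu,\tilde\mu)-p(x'|x,a,\mu',\tilde\mu)\big)\min_{a'}Q(x',a')$, bounded by $\gamma\|Q\|_\infty\,\|p(\cdot|x,a,\mu,\tilde\mu)-p(\cdot|x,a,\mu',\tilde\mu)\|_1\le\gamma\|Q\|_\infty L_p^{glob}\|\mu-\mu'\|_1$, again by Assumption~\ref{mfcglip}; the analogous splitting in $\tilde\mu$ produces the constants $L_f^{loc}$ and $L_p^{loc}$.

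The one delicate point — and the main obstacle — is that these last two estimates are not uniform over all $Q\in\mathbbm{R}^{|\mathcal{X}|\times|\mathcal{A}|}$: their constants grow linearly in $\|Q\|_\infty$, so $\mathcal{T}_3$ is Lipschitz in $\mu,\tilde\mu$ only on bounded sets of $Q$. This is handled by the standard a priori bound for Q-learning iterates: since $f$ is bounded by some $M_f$ (Assumption~\ref{mfcglip}) and $\gamma<1$, the ideal iteration $Q_{n+1}(x,a)=(1-\rho^Q_n)Q_n(x,a)+\rho^Q_n\big(f(x,a,\mu_n,\mu^{(x,a)}_n)+\gamma\sum_{x'}p(x'|x,a,\mu_n,\mu^{(x,a)}_n)\min_{a'}Q_n(x',a')\big)$ expresses $Q_{n+1}$ as a (possibly degenerate) convex combination of terms of $\|\cdot\|_\infty$-norm at most $\max(\|Q_n\|_\infty, M_f+\gamma\|Q_n\|_\infty)$, which together with $Q_0=0$ keeps $\|Q_n\|_\infty\le C_Q:=M_f/(1-\gamma)$ for all $n$. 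It therefore suffices to prove the proposition on $\{\|Q\|_\infty\le C_Q\}$, and substituting $\|Q\|_\infty\le C_Q$ above and collecting the three estimates gives, for all $\mu,\mu',\tilde\mu,\tilde\mu'$ and all $Q,Q'$ in that ball,
\[
\|\mathcal{T}_3(\mu,Q,\tilde\mu)-\mathcal{T}_3(\mu',Q',\tilde\mu')\|_\infty\le (L_f^{glob}+\gamma C_Q L_p^{glob})\|\mu-\mu'\|_1+(1+\gamma)\|Q-Q'\|_\infty+(L_f^{loc}+\gamma C_Q L_p^{loc})\|\tilde\mu-\tilde\mu'\|_1,
\]
which is the asserted Lipschitz continuity.
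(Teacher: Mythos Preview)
Your proof is correct and follows essentially the same approach as the paper: split into the three arguments, use the $1$-Lipschitz property of $\min$ for the $Q$-variable to get the $(1+\gamma)$ constant, and use Assumption~\ref{mfcglip} on $f$ and $p$ for the two distribution variables to get constants of the form $L_f+\gamma L_p\|Q\|_\infty$. You actually go further than the paper by addressing the $\|Q\|_\infty$-dependence of the Lipschitz constants via an a priori bound on the iterates; the paper simply records the bounds with $\|Q\|_\infty$ left in and does not comment on this point.
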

\begin{proof}
We recall that the Bellman operator $\mathcal{B}_{\mu,\tilde\mu}$ has been introduced in~\eqref{eq:BQ}. 
We note that:
    \begin{align*}
        \|\mathcal{T}_3(\mu,Q,\tilde\mu) - \mathcal{T}_3(\mu, Q',\tilde\mu)\|_{\infty} &\leq \|\mathcal{B}_{\mu,\tilde\mu} Q - \mathcal{B}_{\mu,\tilde\mu} Q'\|_\infty + \|Q-Q'\|_\infty\\
        &\leq \gamma \|\sum_{x'} p(x'|\cdot,\cdot,\mu,\tilde\mu)(\min_{a'} Q(x',a')-\min_{a'} Q'(x',a'))\|_\infty+\|Q-Q'\|_\infty\\
        &\leq \gamma \|Q-Q'\|_\infty + \|Q-Q'\|_\infty\\
        &\leq (\gamma + 1)\|Q-Q'\|_\infty,
    \end{align*}
    where we used the fact that 
    $$
        \|\sum_{x'} p(x'|\cdot,\cdot,\mu,\tilde\mu)(\min_{a'} Q(x',a')-\min_{a'} Q'(x',a'))\|_\infty \le \sup_{x,a}  \sum_{x'} p(x'|x,a,\mu,\tilde\mu)|(\min_{a'} Q(x',a')-\min_{a'} Q'(x',a'))|,
    $$ 
    with $p(x'|x,a,\mu,\tilde\mu) \le 1$ and $|\min_{a'} Q(x',a')-\min_{a'} Q'(x',a')| \le \|Q- Q'\|_\infty$.  Moreover, 
    \begin{align*}
        \|\mathcal{T}_3(\mu,Q,\tilde\mu) - \mathcal{T}_2(\mu,Q,\tilde\mu')\|_{\infty} &\leq \|f(\cdot,\cdot,\mu,\tilde\mu) - f(\cdot,\cdot,\mu,\tilde\mu')\|_\infty + \|\gamma\sum_{x'}|p(x'|\cdot,\cdot,\mu,\tilde\mu)-p(x'|\cdot,\cdot,\mu,\tilde\mu')||\min_{a'} Q(x',a')|\|_\infty\\
        &\leq (L_f^{glob} + \gamma L_p^{glob}\|Q\|_\infty)\|\tilde\mu-\tilde\mu'\|_1\\
        &\leq |\mathcal{X}|(L_f^{glob} + \gamma L_p^{glob}\|Q\|_\infty)\|\tilde\mu-\tilde\mu'\|_\infty.
    \end{align*}
    
    Likewise, we can show:
    \begin{align*}
        \|\mathcal{T}_3(\mu,Q,\tilde\mu) - \mathcal{T}_2(\mu',Q,\tilde\mu)\|_{\infty} 
        &\leq |\mathcal{X}|(L_f^{loc} + \gamma L_p^{loc}\|Q\|_\infty)\|\mu-\mu'\|_\infty.
    \end{align*}
\end{proof}

Now that we have these Lipschitz properties, we show that the ODE system (\ref{DEs}) has a unique global asymptotically stable equilibrium (GASE).
\begin{assumption}\label{mfcglp}
    Let $L_p^{loc} < \frac{1}{2}|\mathcal{X}|c_{min}$ and $L_p^{glob} < \frac{1}{2}|\mathcal{X}|c_{min}$ .
\end{assumption}
We first show that the fast scale variable, which corresponds to the local distribution,  converges to a GASE.
\begin{proposition}\label{emu'_MFCG}
    Suppose Assumptions \ref{mfcglip} and \ref{mfcglp} hold. For any given $Q$ and $\mu$, for every $(x,a)$, $\dot{\tilde\mu}_t = \tilde{\mathcal{P}}_3^{(x,a)}(\mu,Q,\tilde\mu_t)$ has a unique GASE, that we will denote by $\tilde\mu^{*\phi, (x,a)}_{Q,\mu}$. Moreover, $\tilde\mu^{*\phi, (x,a)}:\mathbbm{R}^{|\mathcal{X}|\times|\mathcal{A}|} \times \Delta^{|\mathcal{X}|}\to\Delta^{|\mathcal{X}|}$, $(\mu,Q) \mapsto \tilde\mu^{*\phi, (x,a)}_{\mu,Q}$ is Lipschitz.
\end{proposition}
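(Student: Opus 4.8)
\emph{Plan.} Fix $\mu$ and $Q$ once and for all and abbreviate $P(\nu):=\mathrm{\tilde P}_{(x,a)}^{\softmin_\phi Q,\mu,\nu}$, so that by \eqref{def:P3T3bis} the ODE reads $\dot{\tilde\mu}_t=\tilde\mu_t P(\tilde\mu_t)-\tilde\mu_t$. Everything rests on two structural facts about $P(\cdot)$. First, for every $\nu\in\Delta^{|\mathcal X|}$ the matrix $P(\nu)$ is row-stochastic with every entry $\ge c_{min}>0$: each row is one of the probability vectors $p(\cdot|x,a,\mu,\nu)$ or $\sum_{a'}({\softmin}_\phi Q(x'))(a')\,p(\cdot|x',a',\mu,\nu)$ with $x'\ne x$, all of which are bounded below by $c_{min}$. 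Writing $P(\nu)=(1-|\mathcal X|c_{min})R+c_{min}J$ with $J$ the all-ones matrix and $R$ row-stochastic, exactly as in the remark inside the proof of Proposition~\ref{PLipschitz}, this gives the ergodicity estimate $\|zP(\nu)\|_1\le(1-|\mathcal X|c_{min})\|z\|_1$ for every signed $z$ with $\sum_y z(y)=0$. Second, for every fixed $\nu'\in\Delta^{|\mathcal X|}$ the map $\nu\mapsto\nu'P(\nu)$ is $L_p^{loc}$-Lipschitz for $\|\cdot\|_1$, by the same computation that produces the $L_p^{loc}$-term in the proof of Proposition~\ref{TPLipschitz}. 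Since $\tilde{\mathcal P}_3^{(x,a)}(\mu,Q,\cdot)$ is Lipschitz (Proposition~\ref{TPLipschitz}) and $\Delta^{|\mathcal X|}$ is forward-invariant for the flow (the total mass is conserved because rows of $P(\tilde\mu_t)$ sum to one, and a coordinate hitting $0$ has nonnegative derivative), the ODE has, from each initial point in $\Delta^{|\mathcal X|}$, a unique global solution that stays in $\Delta^{|\mathcal X|}$.

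\emph{Equilibrium.} A point $\nu\in\Delta^{|\mathcal X|}$ is an equilibrium of the ODE precisely when $\nu=\nu P(\nu)$, i.e.\ when $\nu$ is a stationary distribution of $P(\nu)$; since $P(\nu)$ has strictly positive entries it has a unique stationary distribution $\Psi(\nu)$, so the equilibria of the ODE are exactly the fixed points of $\Psi:\Delta^{|\mathcal X|}\to\Delta^{|\mathcal X|}$. From the identity $\Psi(\nu_1)-\Psi(\nu_2)=\Psi(\nu_1)\big(P(\nu_1)-P(\nu_2)\big)+\big(\Psi(\nu_1)-\Psi(\nu_2)\big)P(\nu_2)$ and the ergodicity estimate applied to the zero-sum vector $\Psi(\nu_1)-\Psi(\nu_2)$, I obtain $\|\Psi(\nu_1)-\Psi(\nu_2)\|_1\le\frac{1}{|\mathcal X|c_{min}}\|\Psi(\nu_1)\big(P(\nu_1)-P(\nu_2)\big)\|_1\le\frac{L_p^{loc}}{|\mathcal X|c_{min}}\|\nu_1-\nu_2\|_1$, using the second structural fact. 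Assumption~\ref{mfcglp} makes this constant $<\tfrac12<1$, so Banach's fixed-point theorem gives a unique fixed point $\tilde\mu^{*\phi,(x,a)}_{Q,\mu}$, which is the unique equilibrium of the ODE.

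\emph{Global asymptotic stability.} Write $\tilde\mu^*:=\tilde\mu^{*\phi,(x,a)}_{Q,\mu}$, $e_t:=\tilde\mu_t-\tilde\mu^*$ (so $\sum_y e_t(y)=0$ by forward-invariance) and $V(\tilde\mu):=\|\tilde\mu-\tilde\mu^*\|_1$. Subtracting $0=\tilde\mu^*P(\tilde\mu^*)-\tilde\mu^*$ from the ODE gives $\dot e_t=-e_t+e_tP(\tilde\mu_t)+\tilde\mu^*\big(P(\tilde\mu_t)-P(\tilde\mu^*)\big)$. Bounding the upper Dini derivative of $V$ along the trajectory by $\max\{\langle\sigma,\dot e_t\rangle:\sigma\in\partial V(\tilde\mu_t)\}$ (every such $\sigma$ satisfies $\|\sigma\|_\infty\le1$ and $\langle\sigma,e_t\rangle=\|e_t\|_1$), then using the ergodicity estimate on $\|e_tP(\tilde\mu_t)\|_1$ and the second structural fact on $\|\tilde\mu^*(P(\tilde\mu_t)-P(\tilde\mu^*))\|_1$, yields $D^+V(\tilde\mu_t)\le\big(L_p^{loc}-|\mathcal X|c_{min}\big)V(\tilde\mu_t)$, with $|\mathcal X|c_{min}-L_p^{loc}>\tfrac12|\mathcal X|c_{min}>0$ by Assumption~\ref{mfcglp}. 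Gronwall's inequality then gives $V(\tilde\mu_t)\le V(\tilde\mu_0)\,e^{-(|\mathcal X|c_{min}-L_p^{loc})t}\to0$, i.e.\ $\tilde\mu^*$ is the (unique) global asymptotically stable equilibrium. (If one prefers to avoid Dini derivatives, the same conclusion follows by integrating $\frac{d}{dt}\big(e^{t}e_t\big)=e^{t}\big(e_tP(\tilde\mu_t)+\tilde\mu^*(P(\tilde\mu_t)-P(\tilde\mu^*))\big)$ and applying Gronwall to $e^{t}\|e_t\|_1$.)

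\emph{Lipschitz dependence, and the main obstacle.} Reinstating the parameters, $\Psi=\Psi_{\mu,Q}$ is a $\kappa$-contraction with $\kappa=L_p^{loc}/(|\mathcal X|c_{min})<\tfrac12$ \emph{uniformly} in $(\mu,Q)$. Using the stationary-distribution perturbation bound once more together with the estimates behind \eqref{LipTP3mu} and \eqref{LipTP3Q} — now the local-measure argument of $P$ is held fixed and only the outer arguments $\mu$ and $Q$ move — one gets $\|\Psi_{\mu_1,Q_1}(\nu)-\Psi_{\mu_2,Q_2}(\nu)\|_1\le\frac{1}{|\mathcal X|c_{min}}\big(L_p^{glob}\|\mu_1-\mu_2\|_1+\phi|\mathcal A|\|Q_1-Q_2\|_\infty\big)$ uniformly in $\nu$. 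The standard estimate for fixed points of a uniformly contractive, parameter-Lipschitz family then yields $\|\tilde\mu^{*\phi,(x,a)}_{Q_1,\mu_1}-\tilde\mu^{*\phi,(x,a)}_{Q_2,\mu_2}\|_1\le\frac{1}{(1-\kappa)|\mathcal X|c_{min}}\big(L_p^{glob}\|\mu_1-\mu_2\|_1+\phi|\mathcal A|\|Q_1-Q_2\|_\infty\big)$, i.e.\ the asserted Lipschitz continuity of $(\mu,Q)\mapsto\tilde\mu^{*\phi,(x,a)}_{Q,\mu}$. The step I expect to be most delicate is the global-stability one: extracting a strict, linear rate of decrease from the non-smooth $L^1$ Lyapunov function requires the Dobrushin/ergodicity estimate to strictly dominate the perturbation term, and it is precisely Assumption~\ref{mfcglp} that secures this; the remaining ingredients (forward-invariance of the simplex, Banach's theorem, and Lipschitz dependence of fixed points on parameters) are routine.
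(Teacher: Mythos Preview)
Your argument is correct. The route differs from the paper's in two respects worth noting. First, the paper works directly with the self-map $F(\tilde\mu)=\tilde\mu\,\tilde{\mathrm P}_{(x,a)}^{\softmin_\phi Q,\mu,\tilde\mu}$: combining your two ``structural facts'' in one stroke (as in the last display of the proof of Proposition~\ref{TPLipschitz}) already gives $\|F(\tilde\mu)-F(\tilde\mu')\|_1\le(L_p^{loc}+1-|\mathcal X|c_{min})\|\tilde\mu-\tilde\mu'\|_1$, so $F$ itself is a strict contraction and the equilibrium is obtained by Banach without passing through your auxiliary stationary-distribution map $\Psi$. Your detour via $\Psi$ is valid (fixed points of $\Psi$ and of $F$ coincide), but it is not needed. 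Second, for global asymptotic stability the paper simply cites the contraction-ODE results of Sell and of Borkar--Soumyanath, whereas you build an explicit $L^1$ Lyapunov/Gronwall argument yielding an exponential rate $e^{-(|\mathcal X|c_{min}-L_p^{loc})t}$; your version is more self-contained and more informative, at the price of the Dini-derivative bookkeeping. The Lipschitz-dependence step is essentially the same in both proofs (fixed points of a uniformly contractive, parameter-Lipschitz family), with the paper's constant $1/(|\mathcal X|c_{min}-L_p^{loc})$ in place of your $1/\big((1-\kappa)|\mathcal X|c_{min}\big)$.
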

\begin{proof}
        Let us fix $\mu$ and $Q$. By Proposition~\ref{TPLipschitz}, we have $ \|\tilde\mu\mathrm{\tilde P}^{\softmin_\phi Q, \mu, \tilde\mu}_{(x,a)}-\tilde\mu' \mathrm{\tilde P}^{\softmin_\phi Q, \mu, \tilde\mu'}_{(x,a)}\|_1 \leq (L^{loc}_p + 1-|\mathcal{X}|c_{min}) \|\tilde\mu-\tilde\mu'\|_1$. Since $(L^{loc}_p + 1-|\mathcal{X}|c_{min}) < 1$ by Assumption~\ref{mfcglp},  $\tilde\mu \mapsto  \tilde\mu \tilde{\mathrm{P}}^{\softmin_\phi Q, \mu,  \tilde\mu}_{(x,a)}$ is a strict contraction. As a result, by the contraction mapping theorem \cite{SELL197342}, 
        there is a unique GASE and furthermore, by \cite[Theorem 3.1]{563625}, it is the limit of $\tilde\mu_t$ denoted by $\tilde\mu^{*\phi, (x,a)}_{\mu,Q}$. Then,
    \begin{align*}
        \|\tilde\mu^{*\phi, (x,a)}_{\mu,Q} - \tilde\mu^{*\phi, (x,a)}_{\mu',Q'}\|_1 
        &\leq \|\tilde\mu^{*\phi, (x,a)}_{\mu,Q} \tilde{\mathrm{P}}^{\softmin_\phi Q, \mu, \tilde\mu^{*\phi, (x,a)}_{\mu,Q}}_{(x,a)} - \tilde\mu^{*\phi, (x,a)}_{\mu',Q'}\tilde{\mathrm{P}}^{\softmin_\phi Q', \mu', \tilde\mu^{*\phi, (x,a)}_{\mu',Q'}}_{(x,a)}\|_1
        \\
        &\leq \|\tilde\mu^{*\phi, (x,a)}_{\mu,Q} \tilde{\mathrm{P}}^{\softmin_\phi Q, \mu, \tilde\mu^{*\phi, (x,a)}_{\mu,Q}}_{(x,a)} - \tilde\mu^{*\phi, (x,a)}_{\mu',Q'}\tilde{\mathrm{P}}^{\softmin_\phi Q, \mu, \tilde\mu^{*\phi, (x,a)}_{\mu',Q'}}_{(x,a)}\|_1
        \\
        &\qquad + \|\tilde\mu^{*\phi, (x,a)}_{\mu',Q'}\tilde{\mathrm{P}}^{\softmin_\phi Q, \mu, \tilde\mu^{*\phi, (x,a)}_{\mu',Q'}}_{(x,a)} - \tilde\mu^{*\phi, (x,a)}_{\mu',Q'}\tilde{\mathrm{P}}^{\softmin_\phi Q, \mu', \tilde\mu^{*\phi, (x,a)}_{\mu',Q'}}_{(x,a)}\|_1
        \\
        &\qquad + \|\tilde\mu^{*\phi, (x,a)}_{\mu',Q'}\tilde{\mathrm{P}}^{\softmin_\phi Q, \mu', \tilde\mu^{*\phi, (x,a)}_{\mu',Q'}}_{(x,a)} - \tilde\mu^{*\phi, (x,a)}_{\mu',Q'}\tilde{\mathrm{P}}^{\softmin_\phi Q', \mu', \tilde\mu^{*\phi, (x,a)}_{\mu',Q'}}_{(x,a)}\|_1
        \\
        &\le (L^{loc}_p + 1 -|\mathcal{X}|c_{min})\|\tilde\mu^{*\phi, (x,a)}_{\mu,Q} - \tilde\mu^{*\phi, (x,a)}_{\mu',Q'}\|_1
        \\
        &\qquad + L_p^{glob} \|\mu - \mu'\|_1
        + \phi|\mathcal{A}| \|Q-Q'\|_\infty.
    \end{align*}
    Furthermore, $(L^{loc}_p + 1 -|\mathcal{X}|c_{min})<1$ by Assumption~\ref{mfcglp} and  as a result, $\tilde\mu^{*\phi, (x,a)}_{\mu,Q}$ is uniformly Lipschitz with respect to $\mu$ and $Q$. 
\end{proof}

\begin{assumption}\label{SecondGASE}
For any fixed $\mu$, the second ODE in the system \eqref{ODEs} along the GASE of the first ODEs, that is
$\dot{Q}_t = \mathcal{T}_3(\mu,Q_t,{\tilde\mu^{*\phi,(x,a)}_{Q_t,\mu}})$, has a unique GASE. We will denote it by $Q^{*\phi}_\mu$.
\end{assumption}

\begin{assumption}\label{ThirdGASE}
The third ODE in system \eqref{ODEs} along the GASE of the second ODE, that is $\dot{\mu}_t = \mathcal{P}_3(\mu_t, Q^{*\phi}_{\mu_t},{\tilde\mu^{*\phi,(x,a)}_{Q^{*\phi}_{\mu_t},\mu_t}})$, has a unique GASE. We will denote it by $\mu^{*\phi}$.
\end{assumption}

In Appendix \ref{gasecontraction}, we show how to derive the existence and uniqueness of these two GASEs using a contraction argument. However, this argument requires an upper bound on the choice of the parameter $\phi$ which limits its use in our context of deriving the convergence of our algorithm to a solution of the MFCG problem. Therefore, in what follows we work under Assumption \ref{SecondGASE} and Assumption \ref{ThirdGASE}   which could alternatively be verified by means of Lyapunov functions for instance, as we do in the example presented in Section \ref{sec:example}.

We can now introduce our first theorem, which guarantees the convergence of the idealized three-timescale algorithm. It relies on the following assumptions about the update rates:
\begin{assumption}\label{squaresumlearningrate_mfcg} 
    The learning rates $\rho^{\tilde{\mu}}_n$, $\rho^Q_n$ and $\rho^{\mu}_n$ are sequences of positive real numbers satisfying
    \begin{align*}
    \sum_n  \rho^{\tilde{\mu}}_n=  \sum_n \rho^Q_n = \sum_n \rho^{\mu}_n = \infty, \quad \sum_n |\rho^{\tilde{\mu}}_n|^2+|\rho^Q_n|^2 + |\rho^{\mu}_n|^2 <\infty,
        \quad \rho^\mu_n/\rho^Q_n \xrightarrow[n\to+\infty]{} 0, \quad \rho^Q_n /\rho^{\tilde{\mu}}_n\xrightarrow[n\to+\infty]{} 0.
    \end{align*}
\end{assumption}

\begin{theorem}\label{thMFCGcvg}
    Suppose Assumptions \ref{mfcglip}, \ref{mfcglp}, \ref{SecondGASE}, \ref{ThirdGASE} and \ref{squaresumlearningrate_mfcg} hold. Then, $(\mu_n, Q_n,\tilde\mu_n^{(x,a)})$ defined in \eqref{DEs} converge to $(\mu^{*\phi},Q^{*\phi}_{\mu^{*\phi}},\tilde\mu^{*\phi,(x,a)}_{Q^{*\phi},\mu^{*\phi}})$ as $n \to\infty$.
\end{theorem}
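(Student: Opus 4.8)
The plan is to cast the discrete iteration \eqref{DEs} as a stochastic-approximation recursion with three timescales and apply a three-timescale generalization of the two-timescale convergence result of \cite{Borkar97}. Since the idealized algorithm \eqref{DEs} uses exact expectations, there is no martingale-difference noise here, so the recursion is in fact a deterministic Euler discretization of the ODE system \eqref{ODEs} with step sizes $\rho^{\tilde\mu}_n, \rho^Q_n, \rho^\mu_n$. Under Assumption \ref{squaresumlearningrate_mfcg}, the step sizes are summable-square and non-summable, and their ratios go to zero in the correct order, which is exactly what is needed to separate the three scales.

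First, I would observe that on the \emph{fastest} timescale (step size $\rho^{\tilde\mu}_n$), the variables $\mu_n$ and $Q_n$ are quasi-static: because $\rho^Q_n/\rho^{\tilde\mu}_n \to 0$ and $\rho^\mu_n/\rho^Q_n\to 0$, the increments of $Q_n$ and $\mu_n$ per step are $o(\rho^{\tilde\mu}_n)$, so from the viewpoint of the $\tilde\mu^{(x,a)}$-iteration they are frozen. Proposition \ref{emu'_MFCG} then gives that, for frozen $(\mu,Q)$, the fast ODE $\dot{\tilde\mu}_t = \tilde{\mathcal{P}}_3^{(x,a)}(\mu,Q,\tilde\mu_t)$ has a unique GASE $\tilde\mu^{*\phi,(x,a)}_{Q,\mu}$ depending Lipschitz-continuously on $(\mu,Q)$; hence $\tilde\mu^{(x,a)}_n - \tilde\mu^{*\phi,(x,a)}_{Q_n,\mu_n} \to 0$. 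Substituting this limit into the $Q$-iteration, on the \emph{intermediate} timescale $\rho^Q_n$ the slow variable $\mu_n$ is still quasi-static while $\tilde\mu^{(x,a)}_n$ has equilibrated, so the effective dynamics of $Q_n$ is the ODE $\dot{Q}_t = \mathcal{T}_3(\mu,Q_t,\tilde\mu^{*\phi,(x,a)}_{Q_t,\mu})$; by Assumption \ref{SecondGASE} this has the unique GASE $Q^{*\phi}_\mu$, Lipschitz in $\mu$ (Lipschitzness of $\mathcal{T}_3$ from Proposition \ref{TLipschitz} combined with Proposition \ref{emu'_MFCG} is used to close the loop), so $Q_n - Q^{*\phi}_{\mu_n}\to 0$. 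Finally, on the \emph{slowest} timescale $\rho^\mu_n$, both $Q_n$ and $\tilde\mu^{(x,a)}_n$ track their equilibria, so $\mu_n$ follows $\dot{\mu}_t = \mathcal{P}_3(\mu_t, Q^{*\phi}_{\mu_t}, \tilde\mu^{*\phi,(x,a)}_{Q^{*\phi}_{\mu_t},\mu_t})$, which by Assumption \ref{ThirdGASE} converges to $\mu^{*\phi}$. Chaining the three limits yields $(\mu_n,Q_n,\tilde\mu^{(x,a)}_n)\to(\mu^{*\phi},Q^{*\phi}_{\mu^{*\phi}},\tilde\mu^{*\phi,(x,a)}_{Q^{*\phi},\mu^{*\phi}})$.

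To make this rigorous I would follow the scheme of \cite{Borkar97}: introduce the algorithmic timescales $t(n)=\sum_{k<n}\rho_k$ (one for each rate), construct the interpolated trajectories, and show via the Arzel\`a--Ascoli argument that they converge uniformly on compacts to solutions of the appropriate limiting ODEs, using the boundedness of all iterates ($\mu_n,\tilde\mu^{(x,a)}_n$ live in the compact simplex $\Delta^{|\mathcal{X}|}$, and $Q_n$ stays bounded because $\mathcal{T}_3$ is an affine contraction-type map with bounded $f$ — a uniform a priori bound $\|Q_n\|_\infty \le \|f\|_\infty/(1-\gamma)$ can be obtained as in standard Q-learning). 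The key structural inputs are exactly the Lipschitz bounds in Propositions \ref{PLipschitz}, \ref{TPLipschitz}, \ref{TLipschitz} (which guarantee well-posedness of all ODEs and continuity of the equilibrium maps) and the three GASE statements (Proposition \ref{emu'_MFCG}, Assumptions \ref{SecondGASE}, \ref{ThirdGASE}) which provide the global attractors at each level. I expect the main obstacle to be the careful bookkeeping of the \emph{three} nested perturbation arguments: showing that freezing the slower variables introduces only a vanishing perturbation at each faster level, so that the singular-perturbation / averaging lemma of \cite{Borkar97} applies twice in succession rather than once. In particular, one must verify that the error $\tilde\mu^{(x,a)}_n - \tilde\mu^{*\phi,(x,a)}_{Q_n,\mu_n}$ vanishes fast enough (relative to $\rho^Q_n$) to be treated as an asymptotically negligible input to the $Q$-recursion, and similarly that $Q_n - Q^{*\phi}_{\mu_n}$ vanishes fast enough relative to $\rho^\mu_n$; this is where the strict ordering $\frac12 < \omega^{\tilde\mu} < \omega^Q < \omega^\mu < 1$ and the ratio conditions in Assumption \ref{squaresumlearningrate_mfcg} are essential.
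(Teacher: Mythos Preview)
Your proposal is correct and follows essentially the same approach as the paper: verify that the Lipschitz regularity of $\mathcal{P}_3$, $\tilde{\mathcal{P}}^{(x,a)}_3$, $\mathcal{T}_3$ (Propositions \ref{PLipschitz}, \ref{TPLipschitz}, \ref{TLipschitz}), the GASE properties at each level (Proposition \ref{emu'_MFCG}, Assumptions \ref{SecondGASE}, \ref{ThirdGASE}), and the step-size conditions (Assumption \ref{squaresumlearningrate_mfcg}) together fulfill the hypotheses of the three-timescale extension of \cite[Theorem~1.1]{Borkar97}, and then invoke that result. The paper's proof is in fact a one-line appeal to this theorem, whereas you have additionally unpacked the mechanism (nested freezing of slow variables, interpolated trajectories, chaining of the three equilibrium limits), which is exactly what the cited result does internally.
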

\begin{proof}
    With Assumptions \ref{mfcglip}, \ref{mfcglp}, \ref{SecondGASE}, \ref{ThirdGASE}, \ref{squaresumlearningrate_mfcg},  and the results of Propositions \ref{PLipschitz}, \ref{TPLipschitz}, \ref{TLipschitz}, and \ref{emu'_MFCG}, the assumptions of \cite[Theorem 1.1]{Borkar97} are satisfied, and therefore  guarantees the convergence in the statement.
\end{proof}

\begin{assumption}\label{MFCGuniqueness}
In what follows, we assume that there exists a unique solution to our MFCG problem in the sense of Definition \ref{mfcg_definition}, and we further assume that this solution is a pure policy.
\end{assumption}
We will need the following result:
\begin{lemma}\label{musystem}
For any given pure policy $\alpha$, the following system for $\mu$ and $\tilde\mu^{(x,a)}$
\begin{align}\label{mus}
    \begin{cases}
        \tilde\mu^{(x,a)}=\tilde\mu^{(x,a)} \tilde{\mathrm{P}}^{\alpha, \mu, \tilde\mu^{(x,a)}}_{(x,a)}\\
        \mu=\mu\mathrm{P}^{\alpha, \mu, \tilde\mu^{(x,\alpha(x))}},
    \end{cases}
\end{align}
where $\mathrm{P}^{\pi,\mu,\tilde\mu}$ and $\tilde{\mathrm{P}}_{(x,a)}^{\pi, \mu, \tilde\mu}$ are defined in
\eqref{def:P3T3}, has a unique solution. Furthermore, $\tilde\mu^{(x,\alpha(x))}=\mu$, which is independent of $x$.
\end{lemma}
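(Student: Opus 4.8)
The plan is to use the fact that $\alpha$ is a pure policy to decouple and collapse the system. The first thing I would record is that for $a=\alpha(x)$ the modified control $\tilde\pi^{(x,\alpha(x))}$ coincides with $\alpha$ itself (it only overrides the action prescribed at $x$, and $\alpha$ already prescribes $\alpha(x)=a$ there), so comparing the definitions in \eqref{def:P3T3} one gets $\tilde{\mathrm{P}}_{(x,\alpha(x))}^{\alpha,\mu,\tilde\mu}=\mathrm{P}^{\alpha,\mu,\tilde\mu}$ for every $x$ and every pair $(\mu,\tilde\mu)$. I would also note the elementary but crucial point that, $\alpha$ being pure, each row of $\mathrm{P}^{\alpha,\mu,\tilde\mu}$ and of $\tilde{\mathrm{P}}_{(x,a)}^{\alpha,\mu,\tilde\mu}$ is a distribution of the form $p(\cdot|y,b,\mu,\tilde\mu)$ and hence has all entries bounded below by $c_{min}>0$ (positivity of $c_{min}$ following from Assumption~\ref{mfcglp}); such a stochastic matrix is primitive and thus has a unique invariant distribution.

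Next I would show that any solution $(\mu,(\tilde\mu^{(x,a)})_{x,a})$ of \eqref{mus} must satisfy $\tilde\mu^{(x,\alpha(x))}=\mu$ for every $x$. Fixing $x$ and setting $K:=\mathrm{P}^{\alpha,\mu,\tilde\mu^{(x,\alpha(x))}}$, the first equation of \eqref{mus} at the pair $(x,\alpha(x))$ reads, by the identity above, $\tilde\mu^{(x,\alpha(x))}=\tilde\mu^{(x,\alpha(x))}K$, while the second equation reads $\mu=\mu K$; since $K$ has a unique invariant distribution this forces $\mu=\tilde\mu^{(x,\alpha(x))}$. As $x$ is arbitrary, this proves the ``furthermore'' assertion and reduces \eqref{mus} to the problem: find $\mu\in\Delta^{|\mathcal{X}|}$ with $\mu=\mu\mathrm{P}^{\alpha,\mu,\mu}$, and then, $\mu$ being determined, solve $\tilde\mu^{(x,a)}=\tilde\mu^{(x,a)}\tilde{\mathrm{P}}_{(x,a)}^{\alpha,\mu,\tilde\mu^{(x,a)}}$ for the remaining pairs $a\neq\alpha(x)$.

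For the reduced equation I would define $T:\Delta^{|\mathcal{X}|}\to\Delta^{|\mathcal{X}|}$ by letting $T(\mu)$ be the unique invariant distribution of $\mathrm{P}^{\alpha,\mu,\mu}$, so that solutions of $\mu=\mu\mathrm{P}^{\alpha,\mu,\mu}$ are exactly fixed points of $T$, and prove $T$ is a strict $\|\cdot\|_1$-contraction. Writing $T(\mu)-T(\mu')=T(\mu)\mathrm{P}^{\alpha,\mu,\mu}-T(\mu')\mathrm{P}^{\alpha,\mu',\mu'}$ and inserting $T(\mu')\mathrm{P}^{\alpha,\mu,\mu}$ and $T(\mu')\mathrm{P}^{\alpha,\mu',\mu}$, I would bound the first resulting difference by the Dobrushin estimate $(1-|\mathcal{X}|c_{min})\|T(\mu)-T(\mu')\|_1$ (valid because all entries of $\mathrm{P}^{\alpha,\mu,\mu}$ are $\ge c_{min}$) and the other two by $L_p^{glob}\|\mu-\mu'\|_1$ and $L_p^{loc}\|\mu-\mu'\|_1$ using Assumption~\ref{mfcglip}, which yields $\|T(\mu)-T(\mu')\|_1\le \tfrac{L_p^{glob}+L_p^{loc}}{|\mathcal{X}|c_{min}}\|\mu-\mu'\|_1$; by Assumption~\ref{mfcglp} the constant is $<1$, so Banach's theorem gives a unique $\mu$. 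Finally, with $\mu$ fixed, each map $\tilde\mu\mapsto\tilde\mu\tilde{\mathrm{P}}_{(x,a)}^{\alpha,\mu,\tilde\mu}$ is a strict contraction of modulus $L_p^{loc}+1-|\mathcal{X}|c_{min}<1$, exactly as in Proposition~\ref{emu'_MFCG} with the softmin policy replaced by $\alpha$, so each $\tilde\mu^{(x,a)}$ with $a\neq\alpha(x)$ is uniquely determined; assembling $\mu$, the common value $\mu$ for all $\tilde\mu^{(x,\alpha(x))}$, and these $\tilde\mu^{(x,a)}$ produces a genuine solution, so both existence and uniqueness follow.

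The main obstacle I expect is the contraction estimate for $T$: one has to freeze the correct arguments at each step of the triangle inequality (keeping track of which slot of $p$ moves), and this is precisely where the smallness hypothesis $L_p^{glob},L_p^{loc}<\tfrac12|\mathcal{X}|c_{min}$ of Assumption~\ref{mfcglp} is consumed — the factor $2$ there is exactly what is needed to absorb the two Lipschitz contributions. The pure-policy reduction $\tilde{\mathrm{P}}_{(x,\alpha(x))}^{\alpha,\mu,\tilde\mu}=\mathrm{P}^{\alpha,\mu,\tilde\mu}$, although it is the conceptual heart of the lemma, is immediate once observed, and the uniqueness of the invariant distribution of a primitive kernel is standard.
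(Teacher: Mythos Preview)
Your argument is correct and rests on the same ingredients as the paper's proof---the pure-policy identity $\tilde{\mathrm{P}}_{(x,\alpha(x))}^{\alpha,\mu,\tilde\mu}=\mathrm{P}^{\alpha,\mu,\tilde\mu}$, the Dobrushin-type bound coming from $c_{min}>0$, and the Lipschitz Assumption~\ref{mfcglip} combined with Assumption~\ref{mfcglp}---but the organization is genuinely tighter. The paper first fixes $\mu$, shows the $\tilde\mu^{(x,\alpha(x))}$ all coincide with the unique fixed point of $\nu\mapsto\nu\mathrm{P}^{\alpha,\mu,\nu}$, then proves uniqueness for the coupled subsystem $(\mu,\nu)$ by a direct two-solutions comparison, and only at the very end deduces $\mu=\tilde\mu^{(x,\alpha(x))}$ by invoking the contraction $\nu\mapsto\nu\mathrm{P}^{\alpha,\nu,\nu}$. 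You instead observe immediately that for any solution both $\mu$ and $\tilde\mu^{(x,\alpha(x))}$ are invariant for the \emph{same fixed} primitive kernel $K=\mathrm{P}^{\alpha,\mu,\tilde\mu^{(x,\alpha(x))}}$ and are therefore equal; this collapses the system to the single scalar fixed-point equation $\mu=\mu\mathrm{P}^{\alpha,\mu,\mu}$ in one stroke. Your invariant-distribution map $T$ and the paper's map $\nu\mapsto\nu\mathrm{P}^{\alpha,\nu,\nu}$ have the same fixed points and both contract under Assumption~\ref{mfcglp}, so the endgame is equivalent. The net effect is that your route avoids the separate ``two-solutions'' computation for the subsystem and makes the role of the pure-policy hypothesis more transparent.
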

\begin{proof}
See Appendix \ref{prooflemma}.
\end{proof}

Next, we show that the limit point given by the algorithm, that is $(\mu^{*\phi},Q^{*\phi}_{\mu^{*\phi}},\tilde\mu^{*\phi,(x,a)}_{Q^{*\phi},\mu^{*\phi}})$, provides an approximation of this MFCG solution. 

\begin{definition}\label{MFCGS} 
Given $(\mu^{*\phi},Q^{*\phi}_{\mu^{*\phi}},\tilde\mu^{*\phi,(x,a)}_{Q^{*\phi},\mu^{*\phi}})$, we define the policy $\alpha^*$, the distributions $\mu^*$ and $\tilde\mu^{*,(x,a)}$ for every $(x,a)$, and the state-action value function $Q^*$ as:
    \begin{enumerate}
         \item $\alpha^*(x) = \argmin_{a'} Q^{*\phi}_{\mu^{*\phi}}(x,a')$
         \item  $\mu^*$ and $\tilde\mu^{*,(x,a)}$ are the fixed points of the following equations:
         \[
         \tilde\mu^{*,(x,a)}=\tilde\mu^{*,(x,a)} \tilde{\mathrm{P}}^{\alpha^*, \mu^*, \tilde\mu^{*,(x,a)}}_{(x,a)},\quad \mu^*=\mu^*\mathrm{P}^{\alpha^*, \mu^*, \tilde\mu^{*,(x,\alpha^*(x))}},
         \]
           where we use Lemma \ref{musystem} with $\alpha=\alpha^*$.
        \item For these $\mu^*$ and $\tilde\mu^{*,(x,a)}$, $Q^*$ is the solution to the Bellman equation~\eqref{MKV-Bellman}.
\end{enumerate}
\end{definition}

\begin{remark}\label{mu=mu}
By choosing $a=\alpha^*(x)$ in item 2, by Lemma \ref{musystem}, we have $\tilde\mu^{*,(x,\alpha^*(x))}=\mu^*$. This is expected as in the limiting equilibrium local and global distributions are indistinguishable.
\end{remark}

The following result shows that $(\mu^{*\phi},Q^{*\phi}_{\mu^{*\phi}},\tilde\mu^{*\phi,(x,a)}_{Q^{*\phi},\mu^{*\phi}})$ given by Theorem \ref{thMFCGcvg} are close to $(\mu^*,Q^*,\tilde\mu^{*,(x,a)})$.
\begin{theorem} \label{mfcg_main}
Suppose Assumptions \ref{mfcglip}, \ref{mfcglp}, \ref{SecondGASE}, \ref{ThirdGASE} and \ref{squaresumlearningrate_mfcg} hold.
Let $(\mu^*, Q^*,\tilde\mu^{*,(x,a)})$ given by Definition \ref{MFCGS}.
    Let $\delta(\phi)$ be the action gap defined as $\delta(\phi) = \min_{x\in\mathcal{X}}(\min_{a\notin \argmin_a Q^{*\phi}_{\mu^{*\phi}}}Q^{*\phi}_{\mu^{*\phi}}(x,a) -\min_a Q^{*\phi}_{\mu^{*\phi}}(x,a)) > 0$, and $\delta(\phi) =\infty$ if $Q^{*\phi}_{\mu^{*\phi}}(x)$ is constant with respect to $a$ for each $x$. 
    Then, 
    \begin{align}\label{mfcg_errorphi}
    \begin{cases}
        \|\tilde\mu^{*\phi,(x,a)}_{Q^{*\phi},\mu^{*\phi}} - \tilde\mu^{*,(x,a)}\|_1 + \|\mu^{*\phi}-\mu^*\|_1\leq \frac{4|\mathcal{A}|^{\frac{3}{2}}\exp(-\phi\delta(\phi))}{|\mathcal{X}|c_{min}-2L^{max}_p}
        \\
        \|Q^{*\phi}_{\mu^{*\phi}} - Q^*\|_\infty \leq \frac{(L_f + \frac{\gamma}{1-\gamma} L^{max}_p\|f\|_\infty)4|\mathcal{A}|^{\frac{3}{2}}\exp(-\phi\delta(\phi))}{(1-\gamma)(|\mathcal{X}|c_{min}-2L^{max}_p)}.
    \end{cases}
    \end{align}
\end{theorem}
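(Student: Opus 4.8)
\medskip

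The plan is to compare the two fixed-point systems — the one defining $(\mu^{*\phi}, Q^{*\phi}_{\mu^{*\phi}}, \tilde\mu^{*\phi,(x,a)}_{Q^{*\phi},\mu^{*\phi}})$ (the ``softmin'' system, coming from Assumptions \ref{SecondGASE}, \ref{ThirdGASE} and Proposition \ref{emu'_MFCG}) and the one defining $(\mu^*, Q^*, \tilde\mu^{*,(x,a)})$ from Definition \ref{MFCGS} (the ``argmin'' system). The key quantitative input is that when $Q^{*\phi}_{\mu^{*\phi}}$ has action gap $\delta(\phi)$, the vector ${\softmin}_\phi Q^{*\phi}_{\mu^{*\phi}}(x,\cdot)$ differs from the pure policy $\boldsymbol\delta_{\alpha^*(x)}$ by at most something like $|\mathcal{A}|\exp(-\phi\delta(\phi))$ in $\ell^1$; I would first record this elementary softmin estimate (bounding $\sum_{a\ne \alpha^*(x)} e^{-\phi Q(x,a)}/\sum_{a'} e^{-\phi Q(x,a')} \le |\mathcal{A}|\,e^{-\phi\delta(\phi)}$). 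This lets me replace ${\softmin}_\phi Q^{*\phi}$ by the pure policy $\alpha^*$ in the transition operators $\mathrm{P}$ and $\tilde{\mathrm{P}}_{(x,a)}$, at the cost of an $\ell^1$ error controlled by the softmin estimate times the Lipschitz-in-policy bound on $p$ (which is $\le 1$ since $p$ is a probability and the policies are probability vectors — exactly the mechanism used in the proof of Proposition \ref{PLipschitz}, inequality \eqref{LipP3Q}).

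\medskip

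Next I would set up the contraction estimate for the distribution variables. By Proposition \ref{emu'_MFCG} the map $\tilde\mu \mapsto \tilde\mu\,\tilde{\mathrm{P}}^{{\softmin}_\phi Q,\mu,\tilde\mu}_{(x,a)}$ is a strict contraction with modulus $L_p^{loc}+1-|\mathcal{X}|c_{min} < 1$ (using Assumption \ref{mfcglp}), and the same holds for the global map. Writing the fixed-point identities for $\tilde\mu^{*\phi,(x,a)}_{Q^{*\phi},\mu^{*\phi}}$ and $\tilde\mu^{*,(x,a)}$ and subtracting, I add and subtract intermediate terms exactly as in the Lipschitz proof of Proposition \ref{emu'_MFCG}: one term absorbs the contraction factor $(L_p^{loc}+1-|\mathcal{X}|c_{min})\|\tilde\mu^{*\phi,(x,a)} - \tilde\mu^{*,(x,a)}\|_1$, a second term gives $L_p^{glob}\|\mu^{*\phi}-\mu^*\|_1$, a third term gives the softmin-replacement error $\le |\mathcal{A}|^{3/2}\exp(-\phi\delta(\phi))$ (the extra $|\mathcal{A}|^{1/2}$ coming from passing between $\ell^2$ and $\ell^1$ as in Proposition \ref{PLipschitz}), and the $Q$-difference term contributes $\phi|\mathcal{A}|\|Q^{*\phi}-Q^*\|_\infty$. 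I do the same for the global $\mu$-equation. Adding the two inequalities and using $L_p^{glob}, L_p^{loc} \le L_p^{max} < \tfrac12|\mathcal{X}|c_{min}$, the coefficients of the distribution differences on the right can be rearranged so that $(|\mathcal{X}|c_{min} - 2L_p^{max})(\|\tilde\mu^{*\phi,(x,a)}-\tilde\mu^{*,(x,a)}\|_1 + \|\mu^{*\phi}-\mu^*\|_1)$ is bounded by a constant times $|\mathcal{A}|^{3/2}\exp(-\phi\delta(\phi))$ plus a term involving $\|Q^{*\phi}-Q^*\|_\infty$.

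\medskip

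Then I close the loop through the Bellman equation. Both $Q^{*\phi}_{\mu^{*\phi}}$ and $Q^*$ are fixed points of Bellman-type operators: $Q^* = \mathcal{B}_{\mu^*,\tilde\mu^{*,(x,a)}}Q^*$ by item 3 of Definition \ref{MFCGS}, while $Q^{*\phi}_{\mu^{*\phi}}$ solves $\mathcal{T}_3(\mu^{*\phi}, Q^{*\phi}, \tilde\mu^{*\phi,(x,a)}) = 0$, i.e. $Q^{*\phi}_{\mu^{*\phi}} = \mathcal{B}_{\mu^{*\phi},\tilde\mu^{*\phi,(x,a)}}Q^{*\phi}_{\mu^{*\phi}}$ (possibly with a ${\softmin}$ rather than $\min$ in the one-step term — here I would use the softmin estimate once more to compare $\softmin_\phi Q$ with $\min Q$, incurring another $\exp(-\phi\delta(\phi))$ error scaled by $\|Q\|_\infty \le \|f\|_\infty/(1-\gamma)$). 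Using $\gamma$-contractivity of $\mathcal{B}$ in $Q$ together with its Lipschitz dependence on $(\mu,\tilde\mu)$ from Proposition \ref{TLipschitz} (the relevant constant being $L_f + \frac{\gamma}{1-\gamma}L_p^{max}\|f\|_\infty$), I get $\|Q^{*\phi}-Q^*\|_\infty \le \gamma\|Q^{*\phi}-Q^*\|_\infty + (\text{Lip const})(\|\tilde\mu^{*\phi,(x,a)}-\tilde\mu^{*,(x,a)}\|_1 + \|\mu^{*\phi}-\mu^*\|_1) + (\text{softmin error})$, hence $\|Q^{*\phi}-Q^*\|_\infty \le \frac{1}{1-\gamma}\big[\,\cdots\,\big]$. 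Substituting the distribution bound into this and vice versa, and absorbing the (small, $O(\phi e^{-\phi\delta})$ or $O(e^{-\phi\delta})$) cross terms, yields the two displayed inequalities in \eqref{mfcg_errorphi}.

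\medskip

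The main obstacle I anticipate is the coupling: the distribution bound has a $\|Q^{*\phi}-Q^*\|_\infty$ term with coefficient $\sim \phi|\mathcal{A}|$, and the $Q$-bound has a distribution term, so one must be careful that the combined linear system is still invertible with the clean constants stated — in particular the $\phi|\mathcal{A}|$ coefficient is not small, so one cannot simply treat it perturbatively. The resolution is presumably that the $Q$-difference is itself $O(\exp(-\phi\delta(\phi)))$ (the dominant scale), so $\phi|\mathcal{A}|\cdot O(e^{-\phi\delta(\phi)})$ is still $o(1)$ and can be absorbed; making this rigorous requires solving the $2\times2$ linear inequality system and checking the sign/size of the resulting coefficients against $|\mathcal{X}|c_{min} - 2L_p^{max} > 0$. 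A secondary technical point is being careful about whether the softmin Bellman fixed point for $Q^{*\phi}$ uses $\min_{a'}$ or $\softmin_\phi$ in the continuation term — the statement's $\mathcal{T}_3$ uses $\min_{a'}$, which is convenient, so the only softmin error in the $Q$-equation comes through the $\tilde\mu$-dependence and through comparing $\alpha^*$ with $\softmin_\phi Q^{*\phi}$ in the distribution equations.
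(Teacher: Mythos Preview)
Your overall strategy—compare the two fixed-point systems via triangle inequality, invoke the softmin-to-argmin estimate, and exploit the contraction constants from Propositions~\ref{PLipschitz}--\ref{TPLipschitz}—matches the paper. However, the ``main obstacle'' you anticipate is a phantom.

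The coupling term $\phi|\mathcal{A}|\,\|Q^{*\phi}_{\mu^{*\phi}}-Q^*\|_\infty$ in your distribution inequality should not be there. Look again at Definition~\ref{MFCGS}, item~1: the pure policy $\alpha^*$ is $\argmin_{a'} Q^{*\phi}_{\mu^{*\phi}}(x,a')$, i.e.\ it is defined from the \emph{softmin} $Q$-function, not from $Q^*$. Consequently, in the fixed-point equation for $\tilde\mu^{*,(x,a)}$ the transition operator is $\tilde{\mathrm P}_{(x,a)}^{\alpha^*,\mu^*,\tilde\mu^{*,(x,a)}}$, and the only policy change when comparing it to $\tilde{\mathrm P}_{(x,a)}^{\softmin_\phi Q^{*\phi}_{\mu^{*\phi}},\,\mu^{*\phi},\,\tilde\mu^{*\phi,(x,a)}}$ is $\softmin_\phi Q^{*\phi}_{\mu^{*\phi}} \to \argmin Q^{*\phi}_{\mu^{*\phi}}$. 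That step is bounded directly by the action-gap estimate (the paper invokes \cite[Lemma~7]{guo2019learning}) as $|\mathcal{A}|^{1/2}\max_x\|\softmin_\phi Q^{*\phi}_{\mu^{*\phi}}(x)-\argmin Q^{*\phi}_{\mu^{*\phi}}(x)\|_2 \le 2|\mathcal{A}|^{3/2}\exp(-\phi\delta(\phi))$, with \emph{no} $Q$-difference term. Your ``third'' and ``fourth'' terms are the same term.

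Once you drop that spurious term, the argument decouples entirely: adding the $\tilde\mu$- and $\mu$-inequalities gives
\[
(|\mathcal{X}|c_{min}-2L_p^{loc})\,\|\tilde\mu^{*\phi,(x,a)}-\tilde\mu^{*,(x,a)}\|_1 + (|\mathcal{X}|c_{min}-2L_p^{glob})\,\|\mu^{*\phi}-\mu^*\|_1 \le 4|\mathcal{A}|^{3/2}\exp(-\phi\delta(\phi)),
\]
which is the first line of~\eqref{mfcg_errorphi}. The $Q$-bound then follows in one step from the $\gamma$-contractivity of $\mathcal{B}$ and its Lipschitz dependence on $(\mu,\tilde\mu)$ (Proposition~\ref{TLipschitz}), by plugging in the already-established distribution bound. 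No $2\times 2$ linear system, no absorption of $\phi$-dependent cross terms, and the stated constants fall out cleanly. Your final remark is correct: since $\mathcal{T}_3$ uses $\min_{a'}$, there is no additional softmin error in the $Q$-equation.
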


\begin{proof}
    Using a similar argument as in Proposition \ref{PLipschitz} for the first term of the second inequality below, we have:
\begin{align*}
     \|\tilde\mu^{*\phi,(x,a)}_{Q^{*\phi},\mu^{*\phi}} - \tilde\mu^{*,(x,a)}\|_1 &= \|\tilde\mu^{*\phi,(x,a)}_{Q^{*\phi},\mu^{*\phi}}\mathrm{\tilde P}_{(x,a)}^{\softmin_\phi Q^{*\phi}_{\mu^{*\phi}},\mu^{*\phi},\tilde\mu^{*\phi,(x,a)}_{Q^{*\phi},\mu^{*\phi}}} - \tilde\mu^{*,(x,a)}\mathrm{\tilde P}_{(x,a)}^{\alpha^*,\mu^*,\tilde\mu^{*,(x,a)}}\|_1 \\
    &\leq\|\tilde\mu^{*\phi,(x,a)}_{Q^{*\phi},\mu^{*\phi}}\mathrm{\tilde P}_{(x,a)}^{\softmin_\phi Q^{*\phi}_{\mu^{*\phi}},\mu^{*\phi},\tilde\mu^{*\phi,(x,a)}_{Q^{*\phi},\mu^{*\phi}}} - \tilde\mu^{*\phi,(x,a)}_{Q^{*\phi},\mu^{*\phi}}\mathrm{\tilde P}_{(x,a)}^{\alpha^*,\mu^{*\phi},\tilde\mu^{*\phi,(x,a)}_{Q^{*\phi},\mu^{*\phi}}}\|_1\\
    &\qquad+\|\tilde\mu^{*\phi,(x,a)}_{Q^{*\phi},\mu^{*\phi}}\mathrm{\tilde P}_{(x,a)}^{\alpha^*,\mu^{*\phi},\tilde\mu^{*\phi,(x,a)}_{Q^{*\phi},\mu^{*\phi}}} - \tilde\mu^{*,(x,a)}\mathrm{\tilde P}_{(x,a)}^{\alpha^*,\mu^{*\phi},\tilde\mu^{*,(x,a)}}\|_1 \\
    &\qquad+\|\tilde\mu^{*,(x,a)}\mathrm{\tilde P}_{(x,a)}^{\alpha^*,\mu^{*\phi},\tilde\mu^{*,(x,a)}}-\tilde\mu^{*,(x,a)}\mathrm{\tilde P}_{(x,a)}^{\alpha^*,\mu^*,\tilde\mu^{*,(x,a)}}\|_1\\
    &\leq|\mathcal{A}|^{\frac{1}{2}}\max_{x\in\mathcal{X}}\|{\softmin}_\phi Q^{*\phi}_{\mu^{*\phi}}(x)-\argmin Q^{*\phi}_{\mu^{*\phi}}(x)\|_2 \\
    &\qquad+ (L^{loc}_p+1-|\mathcal{X}|c_{min})\|\tilde\mu^{*\phi,(x,a)}_{Q^{*\phi},\mu^{*\phi}} - \tilde\mu^{*,(x,a)}\|_1 + L^{glob}_p\|\mu^{*\phi}-\mu^*\|_1  \\
    &\leq
    2|\mathcal{A}|^{\frac{3}{2}}\exp(-\phi\delta(\phi))+(L^{loc}_p+1-|\mathcal{X}|c_{min})\|\tilde\mu^{*\phi,(x,a)}_{Q^{*\phi},\mu^{*\phi}} - \tilde\mu^{*,(x,a)}\|_1 + L^{glob}_p\|\mu^{*\phi}-\mu^*\|_1,
\end{align*}
where the first term in the last inequality comes from~\cite[Lemma 7]{guo2019learning}.
With a similar argument, we also have 
\begin{align*}
    \|\mu^{*\phi}-\mu^*\|_1\leq 2|\mathcal{A}|^{\frac{3}{2}}\exp(-\phi\delta(\phi))+(L^{glob}_p+1-|\mathcal{X}|c_{min})\|\mu^{*\phi}-\mu^*\|_1  + L^{loc}_p\|\tilde\mu^{*\phi,(x,\alpha^*(x))}_{Q^{*\phi},\mu^{*\phi}} - \tilde\mu^{*,(x,\alpha^*(x))}\|_1.
\end{align*}
Consequently, we obtain 
\[
(|\mathcal{X}|c_{min}-2L^{loc}_p)\|\tilde\mu^{*\phi,(x,a)}_{Q^{*\phi},\mu^{*\phi}} - \tilde\mu^{*,(x,a)}\|_1 + (|\mathcal{X}|c_{min}-2L^{glob}_p) \|\mu^{*\phi}-\mu^*\|_1\leq 4|\mathcal{A}|^{\frac{3}{2}}\exp(-\phi\delta(\phi)),
\]
which implies
\[
\|\tilde\mu^{*\phi,(x,a)}_{Q^{*\phi},\mu^{*\phi}} - \tilde\mu^{*,(x,a)}\|_1 + \|\mu^{*\phi}-\mu^*\|_1\leq \frac{4|\mathcal{A}|^{\frac{3}{2}}\exp(-\phi\delta(\phi))}{|\mathcal{X}|c_{min}-2L^{max}_p},
\]
so that we deduce the first inequality in \eqref{mfcg_errorphi}.
Then, 
\begin{align*}
    \|Q^{*\phi}_{\mu^{*\phi}} - Q^*\|_\infty &= \|\mathcal{B}_{\mu^{*\phi},\tilde\mu^{*\phi}_{Q^{*\phi},\mu^{*\phi}}} Q^{*\phi}_{\mu^{*\phi}} - \mathcal{B}_{\mu^*,\tilde\mu^*} Q^*\|_\infty \\
    &\leq \|\mathcal{B}_{\mu^{*\phi},\tilde\mu^{*\phi}_{Q^{*\phi},\mu^{*\phi}}} Q^{*\phi}_{\mu^{*\phi}} - \mathcal{B}_{\mu^{*\phi},\tilde\mu^{*\phi}_{Q^{*\phi},\mu^{*\phi}}} Q^*\|_\infty + \|\mathcal{B}_{\mu^{*\phi},\tilde\mu^{*\phi}_{Q^{*\phi},\mu^{*\phi}}} Q^* - \mathcal{B}_{\mu^{*\phi},\tilde\mu^*} Q^*\|_\infty \\
    &\qquad + \|\mathcal{B}_{\mu^{*\phi},\tilde\mu^*} Q^* - \mathcal{B}_{\mu^*,\tilde\mu^*} Q^*\|_\infty\\
    &\leq \gamma \|Q^{*\phi}_{\mu^{*\phi}} - Q^*\|_\infty + (L_f + \frac{\gamma}{1-\gamma} L^{glob}_p\|f\|_\infty)\|\mu^{*\phi}-\mu^*\|_1\\
    &\qquad+ (L_f + \frac{\gamma}{1-\gamma} L^{loc}_p\|f\|_\infty)\|\tilde\mu^{*\phi}_{Q^{*\phi},\mu^{*\phi}} - \tilde\mu^{*}\|_1,
\end{align*}
and consequently,
\[
\|Q^{*\phi}_{\mu^{*\phi}} - Q^*\|_\infty \leq \frac{(L_f + \frac{\gamma}{1-\gamma} L^{max}_p\|f\|_\infty)4|\mathcal{A}|^{\frac{3}{2}}\exp(-\phi\delta(\phi))}{(1-\gamma)(|\mathcal{X}|c_{min}-2L^{max}_p)}.
\]
\end{proof}

We are now ready for the main result of this section.
\begin{theorem} \label{th:mainMFCG}
Suppose
Assumptions~\ref{mfcglip}, \ref{mfcglp}, \ref{SecondGASE}, \ref{ThirdGASE}, \ref{squaresumlearningrate_mfcg}, and \ref{MFCGuniqueness} hold. Let $\delta = \liminf_{\phi\to\infty}\delta(\phi)$ and assume $\delta >0$.  
Then, the pure policy $\alpha^*$ and the distribution $\mu^*=\tilde\mu^{*, (x,\alpha^*(x))}$  introduced in Definition~\ref{MFCGS}  form a solution to the MFCG problem as in Definition~\ref{mfcg_definition}.
\end{theorem}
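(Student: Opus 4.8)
The plan is to let the soft-min temperature $\phi$ tend to infinity and to recover the MFCG solution as a limit of the limit points produced by Theorem~\ref{mfcg_main}. Since $\delta=\liminf_{\phi\to\infty}\delta(\phi)>0$, there are $\delta_0>0$ and $\phi_0$ with $\delta(\phi)\ge\delta_0$ for $\phi\ge\phi_0$, hence $\exp(-\phi\delta(\phi))\le\exp(-\phi\delta_0)\to0$, and Theorem~\ref{mfcg_main} gives $\|\mu^{*\phi}-\mu^*\|_1\to0$, $\|\tilde\mu^{*\phi,(x,a)}_{Q^{*\phi},\mu^{*\phi}}-\tilde\mu^{*,(x,a)}\|_1\to0$ and $\|Q^{*\phi}_{\mu^{*\phi}}-Q^*\|_\infty\to0$ as $\phi\to\infty$. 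The subtlety is that $\alpha^*,\mu^*,\tilde\mu^{*,(x,a)},Q^*$ of Definition~\ref{MFCGS} all depend on $\phi$ through $Q^{*\phi}_{\mu^{*\phi}}$. Since there are only finitely many pure policies on $\mathcal X\times\mathcal A$, I would extract a sequence $\phi_k\to\infty$ along which $\alpha^*$ is a fixed pure policy $\bar\alpha$; by Lemma~\ref{musystem} the measures $\mu^*,\tilde\mu^{*,(x,a)}$ are then fixed along $(\phi_k)$, say $\bar\mu,\bar{\tilde\mu}^{(x,a)}$, and because the Bellman map $Q\mapsto\mathcal B_{\bar\mu,\bar{\tilde\mu}^{(x,a)}}Q$ is a $\gamma$-contraction, $Q^*$ is fixed too, say $\bar Q$. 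Thus $\mu^{*\phi_k}\to\bar\mu$, $\tilde\mu^{*\phi_k,(x,a)}_{Q^{*\phi_k},\mu^{*\phi_k}}\to\bar{\tilde\mu}^{(x,a)}$, $Q^{*\phi_k}_{\mu^{*\phi_k}}\to\bar Q$, and it remains to verify that $(\bar\alpha,\bar\mu)$ solves Definition~\ref{mfcg_definition}.

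For the best-response condition, the key point is that the action gap survives the limit. Once $k$ is large enough that $\|Q^{*\phi_k}_{\mu^{*\phi_k}}-\bar Q\|_\infty<\delta_0/4$, for every $x$ and every $a\notin\argmin_{a'}Q^{*\phi_k}_{\mu^{*\phi_k}}(x,a')$ one gets $\bar Q(x,a)-\min_{a'}\bar Q(x,a')\ge\delta(\phi_k)-2\|Q^{*\phi_k}_{\mu^{*\phi_k}}-\bar Q\|_\infty>\delta_0/2>0$, so $\argmin_{a'}\bar Q(x,a')=\argmin_{a'}Q^{*\phi_k}_{\mu^{*\phi_k}}(x,a')=\{\bar\alpha(x)\}$: $\bar\alpha$ is the greedy policy of $\bar Q$. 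By Definition~\ref{MFCGS}(2)--(3) and Lemma~\ref{musystem}, the measures $\bar{\tilde\mu}^{(x,a)}$ are precisely the limiting local distributions $\mu^{\widetilde{\bar\alpha}^{(x,a)},\bar\mu}$ of the processes controlled by the modified policies $\widetilde{\bar\alpha}^{(x,a)}$, and $\bar Q$ solves the Bellman equation~\eqref{MKV-Bellman} at the global measure $\bar\mu$. Invoking the optimality characterization behind~\eqref{MKV-Bellman} (the generalization of~\cite[Theorem~2, Appendix~C]{Andrea20}), one concludes that $\bar Q=Q^{*}_{\bar\mu}$ and that its greedy policy $\bar\alpha$ minimizes $\pi\mapsto V^{\pi}_{\bar\mu}$; this is condition~1.

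For the consistency condition, by Definition~\ref{MFCGS}(2) and Remark~\ref{mu=mu} we have $\bar{\tilde\mu}^{(x,\bar\alpha(x))}=\bar\mu$, hence $\bar\mu=\bar\mu\,\mathrm P^{\bar\alpha,\bar\mu,\bar\mu}$, i.e.\ $\bar\mu$ solves $\nu=\nu\,\mathrm P^{\bar\alpha,\bar\mu,\nu}$. Under Assumption~\ref{mfcglp} the map $\nu\mapsto\nu\,\mathrm P^{\bar\alpha,\bar\mu,\nu}$ is a strict $\|\cdot\|_1$-contraction (the estimate of Proposition~\ref{emu'_MFCG}, using $c_{min}>0$), so its unique fixed point coincides with the limiting law $\mu^{\bar\alpha,\bar\mu}$ of the controlled process in Definition~\ref{mfcg_definition}; therefore $\mu^{\bar\alpha,\bar\mu}=\bar\mu$, which is condition~2. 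Hence $(\bar\alpha,\bar\mu)$ is a solution of the MFCG problem. By Assumption~\ref{MFCGuniqueness} this solution is unique, so every subsequential limit obtained as above coincides with $(\bar\alpha,\bar\mu)$; consequently $\alpha^*$ of Definition~\ref{MFCGS} equals $\bar\alpha$ for all large $\phi$ (a competing pure policy along a subsequence would yield a second solution) and $\mu^*=\tilde\mu^{*,(x,\alpha^*(x))}=\bar\mu$, which identifies the pair of Definition~\ref{MFCGS} with the MFCG solution.

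I expect the main obstacle to be exactly this bookkeeping of the $\phi$-dependence: the reference triple $(\mu^*,Q^*,\tilde\mu^{*,(x,a)})$ of Definition~\ref{MFCGS} is itself $\phi$-dependent, so one has to combine the $\exp(-\phi\delta(\phi))$ bounds of Theorem~\ref{mfcg_main} with the finiteness of the set of pure policies to extract a genuinely convergent subsequence. The role of the hypothesis $\liminf_{\phi\to\infty}\delta(\phi)>0$ is to make the action gap persist in the limit, which forces the greedy policy of $Q^*$ to equal $\alpha^*$; without it, the contraction fixed point defining $Q^*$ need not be the McKean--Vlasov Bellman optimality equation~\eqref{MKV-Bellman} of the best-response problem and the identification would fail. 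A minor caveat is that the relevant $\argmin$'s should be single-valued so that $\alpha^*$ is genuinely a pure policy, which is in the spirit of Assumption~\ref{MFCGuniqueness}.
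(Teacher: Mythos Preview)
Your proposal is correct and follows essentially the same route as the paper: both use the positive action gap together with the error bounds of Theorem~\ref{mfcg_main} to transfer the Bellman fixed-point equation and the strict inequality $Q^{*\phi}_{\mu^{*\phi}}(x,a)>\min_{a'}Q^{*\phi}_{\mu^{*\phi}}(x,a')$ from the soft-min limit to $(\mu^*,Q^*,\tilde\mu^{*,(x,a)})$, obtaining $\argmin Q^*=\alpha^*$ and hence that $(\alpha^*,\mu^*)$ satisfies both conditions of Definition~\ref{mfcg_definition}. The paper's argument is terser---it simply takes $\phi$ large enough so that the errors in~\eqref{mfcg_errorphi} are dominated by the gap and replaces directly---whereas your subsequence extraction and appeal to Assumption~\ref{MFCGuniqueness} make the $\phi$-dependence of Definition~\ref{MFCGS} explicit, but the substance is the same.
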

\begin{proof}
From the iterations described in \eqref{DEs} we have that the limit satisfies, for all $(x,a)$, 
\[
    Q^{*\phi}_{\mu^{*\phi}}(x,a)=f(x,a,\mu^{*\phi},\tilde\mu^{*\phi,(x,a)}_{Q^{*\phi},\mu^{*\phi}})+\gamma\sum_{x'}p(x'|x,a,\mu^{*\phi},\tilde\mu^{*\phi,(x,a)}_{Q^{*\phi},\mu^{*\phi}}) \min_{a'} Q^{*\phi}_{\mu^{*\phi}}(x,a'), 
\]
and by definition of $\alpha^*$, 
\[
    Q^{*\phi}_{\mu^{*\phi}}(x,a) > \min_{a'} Q^{*\phi}_{\mu^{*\phi}}(x,a')\quad \text{for}\quad a\neq \alpha^*(x).
\]
By definition of the action gap, the difference between the left-hand side and the right-hand side above is greater than the gap $\delta(\phi)$. Under the condition that the errors in 
\eqref{mfcg_errorphi} are small enough (by choosing  $\phi$ large enough), we can replace $Q^{*\phi}_{\mu^{*\phi}}$ by $Q^*$, $\tilde\mu^{*\phi}_{Q^{*\phi},\mu^{*\phi}}$ by $\tilde\mu^*$ and $\mu^{*\phi}$ by $\mu^*$ and obtain:
\[
Q^*(x,a) = f(x,a,\mu^{*},\tilde\mu^{*, (x,a)})+\gamma\sum_{x'}p(x'|x,a,\mu^{*},\tilde\mu^{*, (x,a)})\min_{a'} Q^*(x,a') > \min_{a'} Q^*(x,a')\,\, \text{for}\,\, a\neq \alpha^*(x),
\] 
i.e. $\argmin Q^*= \argmin Q^{*\phi}_{\mu^{*\phi}}=\alpha^*$, so that $(\alpha^*,\mu^*)$ is indeed the MFCG solution. Recall that by Remark~\ref{mu=mu}, for $a=\alpha^*(x)$, we have $\tilde\mu^{*,(x,\alpha^*(x))}=\mu^*$ and $Q^*(x,\alpha^*(x))=V^*(x)$, the optimal value function.
\end{proof}

\subsection{Synchronous Stochastic Approximation}
Recall in Section~\ref{sec:syncalgo}, we defined a synchronous algorithm with stochastic approximation. In this chapter, we will established the convergence of this three-timescale approach with stochastic approximation. Let us introduce the following notation,
\[
    \psi^\mu_n(x):=\sum_{m=1}^n \rho^\mu_m\mathbf{P}_m(x),
    \\\qquad
    \psi^{\mu^{(x,a)}}_n(y):=\sum_{m=1}^n \rho^{\tilde\mu}_m\mathbf{P}^{(x,a)}_m(y),
    \\\qquad
    \psi^Q_n(x,a):=\sum_{m=1}^n \rho^Q_m \mathbf{T}_m(x,a),
\]
where $\mathbf{P}_m(x)$, $\mathbf{P}^{(x,a)}_m(y)$ and $\mathbf{T}_m(x,a)$ are defined in \eqref{mdifference}.
\begin{proposition}\label{squaremartingale}
    Suppose  Assumption~\ref{squaresumlearningrate_mfcg} hold. $\psi^\mu_n(x)$, $\psi^{\mu^{(x,a)}}_n(y)$ and $\psi^Q_n(x,a)$ defined above, are square integrable martingales and hence they converge a.s. as $n \to +\infty$.
\end{proposition}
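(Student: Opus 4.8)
The plan is to show that each of the three processes $\psi^\mu_n$, $\psi^{\mu^{(x,a)}}_n$, $\psi^Q_n$ is a zero-mean martingale with respect to the natural filtration $\mathcal{F}_n := \sigma(\mu_m, Q_m, \mu_m^{(x,a)}, X_m, X_m^{(x,a)} : m \le n)$ generated by the algorithm, and then bound the sum of conditional variances to invoke the martingale convergence theorem for $L^2$-bounded martingales. The key observation, already recorded in \eqref{mdifference} together with the expectation identities \eqref{def:T3}, \eqref{eq:exp-Pcheck-P}, \eqref{eq:mfc-Pcheck-P}, is that $\mathbf{P}_n$, $\mathbf{P}^{(x,a)}_n$, and $\mathbf{T}_n$ are \emph{martingale differences}: conditionally on $\mathcal{F}_n$ (which determines $\mu_n, Q_n, \mu_n^{(x,a)}$ and the current states), the random quantities $\widecheck{\mathcal{T}}_{\mu_n,Q_n,\mu_n^{(x,a)}}(x,a)$, $\widecheck{\mathcal{P}}_{X_n, \softmin_\phi Q_n(X_n)}(\mu)$ and $\widecheck{\mathcal{P}}_{X_n^{(x,a)}, \tilde\pi^{(x,a)}(X_n^{(x,a)})}(\mu^{(x,a)})$ have conditional expectations exactly equal to $\mathcal{T}_3(\mu_n,Q_n,\mu_n^{(x,a)})(x,a)$, $\mathcal{P}_3(\mu_n,Q_n,\mu_n^{(x,a)})$ and $\tilde{\mathcal{P}}^{(x,a)}_3(\mu_n,Q_n,\mu_n^{(x,a)})$ respectively, so each difference has $\mathbb{E}[\,\cdot \mid \mathcal{F}_n] = 0$. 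Since $\rho^\mu_m, \rho^Q_m, \rho^{\tilde\mu}_m$ are deterministic (hence $\mathcal{F}_{m-1}$-measurable), the partial sums $\psi^\mu_n(x) = \sum_{m=1}^n \rho^\mu_m \mathbf{P}_m(x)$ etc.\ are martingales.

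Next I would verify square-integrability and, more importantly, summable conditional second moments. Here the boundedness comes from two sources: first, $\widecheck{\mathcal{P}}$-type quantities are differences of an indicator vector and a probability vector, hence bounded in absolute value by $1$ coordinatewise, so $|\mathbf{P}_m(x)| \le 2$ and $|\mathbf{P}^{(x,a)}_m(y)| \le 2$ uniformly; second, for $\widecheck{\mathcal{T}}$ one needs $Q_n$ to stay bounded, which follows since $f$ is bounded (Assumption~\ref{mfcglip}) and the $Q$-update is a contraction-type iteration with discount $\gamma < 1$ — a standard a priori bound gives $\|Q_n\|_\infty \le \|f\|_\infty/(1-\gamma)$ for all $n$ (using $Q_0 = 0$), so $|\mathbf{T}_m(x,a)| \le C$ for a deterministic constant $C$ depending only on $\|f\|_\infty$ and $\gamma$. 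Therefore
\begin{align*}
\mathbb{E}\!\left[\sum_{m=1}^\infty \mathbb{E}\big[|\rho^\mu_m \mathbf{P}_m(x)|^2 \mid \mathcal{F}_{m-1}\big]\right] \le 4 \sum_{m=1}^\infty |\rho^\mu_m|^2 < \infty,
\end{align*}
by Assumption~\ref{squaresumlearningrate_mfcg}, and similarly $\sum_m |\rho^{\tilde\mu}_m|^2 < \infty$ and $\sum_m |\rho^Q_m|^2 < \infty$ control the other two. Hence each $\psi$-process has uniformly bounded $L^2$ norm (equivalently, finite total quadratic variation in expectation), so by the martingale convergence theorem each converges almost surely, and in $L^2$, as $n \to \infty$.

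The main subtlety — not really an obstacle but the point that needs care — is justifying the uniform boundedness of $Q_n$ used above, since the $Q$-iteration in the synchronous algorithm is $Q_{n+1}(x,a) = Q_n(x,a) + \rho^Q_n \widecheck{\mathcal{T}}_{\mu_n, Q_n, \mu_n^{(x,a)}}(x,a)$ and the increment itself involves $Q_n$. One argues by induction: if $\|Q_n\|_\infty \le \|f\|_\infty/(1-\gamma) =: M$, then since $\rho^Q_n \in (0,1)$ and the Bellman-type update $r + \gamma \min_{a'} Q_n(x'',a')$ lies in $[-M, M]$ whenever $|r| \le \|f\|_\infty$ and $\|Q_n\|_\infty \le M$, the convex-combination structure $Q_{n+1}(x,a) = (1-\rho^Q_n) Q_n(x,a) + \rho^Q_n\big(f_{n+1} + \gamma \min_{a'} Q_n(\cdot,a')\big)$ keeps $\|Q_{n+1}\|_\infty \le M$; the base case $Q_0 = 0$ is immediate. (If this argument is deferred to the asynchronous-convergence section, one may simply cite it.) With that in hand, all three increments are bounded by deterministic constants, the square-summability of the learning rates closes the estimate, and the conclusion follows. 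This is exactly the stochastic-approximation noise-decomposition step underpinning the three-timescale analysis: having shown the martingale noise is asymptotically negligible, the iterates \eqref{DEs} behave like the noiseless system, which is handled in the previous subsection.
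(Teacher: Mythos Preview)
Your proof is correct and follows essentially the same route as the paper's: show the increments are martingale differences with bounded second moments, use the square-summability of the learning rates from Assumption~\ref{squaresumlearningrate_mfcg} to get $L^2$-boundedness, and invoke the martingale convergence theorem. The only difference is that where the paper dispatches the $\psi^Q_n$ case (and implicitly the boundedness of $Q_n$) by citing \cite[Lemma~4.5]{Borkar00}, you spell out the inductive bound $\|Q_n\|_\infty \le \|f\|_\infty/(1-\gamma)$ directly --- which is more self-contained and makes explicit the implicit use of the boundedness of $f$ from Assumption~\ref{mfcglip}.
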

\begin{proof}
    By Lemma 4.5 in \cite{Borkar00}, we directly have $\psi^Q_n$ is a square integrable martingale. Also we have $\mathbbm{E}[\|\mathbf{{P}}_n\|^2]<1$ and $\mathbbm{E}[\|\mathbf{{P}}^{(x,a)}_n\|^2]<1$. Using this and the square summability of $(\rho^{\mu}_n)_n$ and $(\rho^{\tilde\mu}_n)_n$ assumed in Assumption~\ref{squaresumlearningrate_mfcg}, the bound immediately follows, which shows that $\psi^\mu_n$ and $\psi^{\mu^{(x,a)}}_n$ are also square integrable martingales. Then by martingale convergence theorem \cite[p.62]{neveu1975discrete}, we have the convergence of the three martingales.
\end{proof}

Based on the above result and the results proved in the previous subsection, we have the following theorem. 
\begin{theorem}\label{Stocastic MFCG}
    Suppose Assumptions \ref{mfcglip}, \ref{mfcglp}, \ref{SecondGASE}, \ref{ThirdGASE} and \ref{squaresumlearningrate_mfcg} hold. Then, $(\mu_n, Q_n,\tilde\mu_n^{(x,a)})$ obtained by the synchronous algorithm described in Section~\ref{sec:syncalgo} converge to $(\mu^{*\phi},Q^{*\phi}_{\mu^{*\phi}},\tilde\mu^{*\phi,(x,a)}_{Q^{*\phi},\mu^{*\phi}})$ a.s. as $n \to\infty$.
\end{theorem}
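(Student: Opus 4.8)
The plan is to show that the synchronous iterations of Section~\ref{sec:syncalgo} are exactly the idealized iterations \eqref{DEs} perturbed by additive martingale-difference noise, and then to invoke a three-timescale stochastic-approximation theorem whose hypotheses have, up to this perturbation, all been checked in the previous subsection.

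First I would rewrite the three updates of the synchronous algorithm in the canonical stochastic-approximation form
\begin{align*}
    \mu^{(x,a)}_{n+1} &= \mu^{(x,a)}_n + \rho^{\tilde\mu}_n\big(\tilde{\mathcal{P}}^{(x,a)}_3(\mu_n,Q_n,\mu^{(x,a)}_n) + \mathbf{P}^{(x,a)}_n\big),\\
    Q_{n+1}(x,a) &= Q_n(x,a) + \rho^Q_n\big(\mathcal{T}_3(\mu_n,Q_n,\mu^{(x,a)}_n)(x,a) + \mathbf{T}_n(x,a)\big),\\
    \mu_{n+1} &= \mu_n + \rho^\mu_n\big(\mathcal{P}_3(\mu_n,Q_n,\mu^{(x,a)}_n) + \mathbf{P}_n\big),
\end{align*}
with $\mathbf{P}^{(x,a)}_n,\mathbf{T}_n,\mathbf{P}_n$ as in \eqref{mdifference}; by \eqref{def:T3}, \eqref{eq:exp-Pcheck-P} and \eqref{eq:mfc-Pcheck-P} these are, conditionally on the $\sigma$-field generated by the samples up to step $n$, centered, hence form martingale-difference sequences. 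I would then record the two auxiliary facts needed for the abstract theorem: (i) the iterates stay in a bounded set a.s.\ — the distributions $\mu_n$ and $\mu^{(x,a)}_n$ remain in $\Delta^{|\mathcal{X}|}$ since each update is a convex combination, and $\|Q_n\|_\infty$ stays bounded because $\mathcal{T}_3$ pushes $Q$ towards the bounded fixed-point region (using $\|f\|_\infty<\infty$ and $\gamma<1$ from Assumption~\ref{mfcglip}); (ii) the conditional second moments of the noise are uniformly bounded — immediate for $\mathbf{P}_n,\mathbf{P}^{(x,a)}_n$, which are differences of simplex vectors, and for $\mathbf{T}_n$ it follows from the boundedness of $f$ and of $Q_n$. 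Together with Assumption~\ref{squaresumlearningrate_mfcg}, this is precisely what Proposition~\ref{squaremartingale} uses to conclude that $\psi^\mu_n,\psi^{\mu^{(x,a)}}_n,\psi^Q_n$ are convergent square-integrable martingales, so that the accumulated noise is asymptotically negligible at each timescale.

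I would then invoke the three-timescale generalization of the two-timescale stochastic-approximation result of \cite{Borkar97} (using also \cite{Borkar00}). Its hypotheses are: Lipschitz continuity of $\mathcal{P}_3,\tilde{\mathcal{P}}^{(x,a)}_3,\mathcal{T}_3$ (Propositions~\ref{PLipschitz}, \ref{TPLipschitz}, \ref{TLipschitz}); the step-size conditions, including the two ratio limits $\rho^\mu_n/\rho^Q_n\to0$ and $\rho^Q_n/\rho^{\tilde\mu}_n\to0$ (Assumption~\ref{squaresumlearningrate_mfcg}); martingale-difference noise with bounded second moments whose running sums against the rates converge a.s.\ (Proposition~\ref{squaremartingale}); and the existence of the nested GASEs $\tilde\mu^{*\phi,(x,a)}_{Q,\mu}$ for the fast ODE (Proposition~\ref{emu'_MFCG}), $Q^{*\phi}_\mu$ for the middle ODE (Assumption~\ref{SecondGASE}), and $\mu^{*\phi}$ for the slow ODE (Assumption~\ref{ThirdGASE}). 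The conclusion is exactly that $(\mu_n,Q_n,\mu^{(x,a)}_n)\to(\mu^{*\phi},Q^{*\phi}_{\mu^{*\phi}},\tilde\mu^{*\phi,(x,a)}_{Q^{*\phi},\mu^{*\phi}})$ a.s., which is the claim; this parallels the way Theorem~\ref{thMFCGcvg} was obtained from the deterministic form of the same abstract result, the only genuinely new ingredient being the control of the noise via Proposition~\ref{squaremartingale}.

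The main obstacle is not this bookkeeping but the abstract three-timescale stochastic-approximation theorem itself (which the paper establishes separately): one must run the tracking argument of \cite{Borkar97} in two nested stages while simultaneously absorbing the noise. Concretely, one shows first that on the fastest clock $\mu^{(x,a)}_n$ shadows the flow of $\dot{\tilde\mu}_t=\tilde{\mathcal{P}}^{(x,a)}_3(\mu,Q,\tilde\mu_t)$ for quasi-static $(\mu,Q)$ and stays near $\tilde\mu^{*\phi,(x,a)}_{Q,\mu}$; then, substituting this approximation, that on the middle clock $Q_n$ shadows $\dot Q_t=\mathcal{T}_3(\mu,Q_t,\tilde\mu^{*\phi,(x,a)}_{Q_t,\mu})$ and stays near $Q^{*\phi}_\mu$; and finally that on the slow clock $\mu_n$ tracks $\dot\mu_t=\mathcal{P}_3(\mu_t,Q^{*\phi}_{\mu_t},\tilde\mu^{*\phi,(x,a)}_{Q^{*\phi}_{\mu_t},\mu_t})$ and converges to $\mu^{*\phi}$ — each frozen-parameter approximation being legitimate thanks to the Lipschitz regularity of the GASE maps (Proposition~\ref{emu'_MFCG} and the analogous regularity of $Q^{*\phi}_\mu$), and the martingale increments from Proposition~\ref{squaremartingale} contributing only a vanishing perturbation at every level.
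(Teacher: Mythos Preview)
Your proposal is correct and follows essentially the same route as the paper: rewrite the synchronous updates as the idealized drifts $\mathcal{P}_3,\mathcal{T}_3,\tilde{\mathcal{P}}^{(x,a)}_3$ plus the martingale-difference noises of \eqref{mdifference}, and then invoke the three-timescale version of \cite[Theorem~1.1]{Borkar97} after checking its hypotheses via Propositions~\ref{PLipschitz}, \ref{TPLipschitz}, \ref{TLipschitz}, \ref{emu'_MFCG}, and \ref{squaremartingale} together with Assumptions~\ref{mfcglip}, \ref{mfcglp}, \ref{SecondGASE}, \ref{ThirdGASE}, \ref{squaresumlearningrate_mfcg}. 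If anything, you are more explicit than the paper (which compresses the proof to a single sentence citing Borkar) in spelling out the boundedness of the iterates and the nested tracking argument.
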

\begin{proof}
    With Assumptions \ref{mfcglip}, \ref{mfcglp}, \ref{SecondGASE}, \ref{ThirdGASE} and \ref{squaresumlearningrate_mfcg}, and the results of Propositions \ref{PLipschitz}, \ref{TPLipschitz}, \ref{TLipschitz}, \ref{emu'_MFCG}  and \ref{squaremartingale}, the assumptions of \cite[Theorem 1.1]{Borkar97} are satisfied. This result guarantees the convergence in the statement.
\end{proof}
\subsection{Stochastic Approximation and Asynchronous Setting for the Three-timescale System}
In the previous section, we assumed that learner have access to a generative model, i.e., to a simulator which can provide the samples of transitions drawn according to the hidden dynamic for arbitrary state $x$. However, as in our full Algorithm~\ref{algo:U3MFQL} a more realistic situation, the learner is constrained to follow the trajectory sampled by the environment without the ability to choose arbitrarily its state. Only one state-action pair is visited at each time step. We call this an \emph{asynchronous setting}. Let $\rho^{\tilde Q}_n$ be a short notation of $\rho^Q_{n,X^{(X_n,A_n)}_n,A^{(X_n,A_n)}_n}$. We define the system as follows for MFCG,
 \begin{subnumcases}{}
    \label{diffa}
        \mu^{(x,a)}_{n+1} = \mu^{(x,a)}_n + \rho_{n,X^{(x,a)}_n,A^{(x,a)}_n}^{\tilde\mu}(\mathcal{\tilde P}^{(x,a)}_3(\mu_n,Q_n,\mu_n^{(x,a)}) + \mathbf{P}^{(x,a)}_n)\\
        \label{diffb}
        \mu_{n+1} = \mu_n + \rho_n^{\mu}(\mathcal{P}_3(\mu_n, Q_n,\mu^{(x,a)}_n) + \mathbf{P}_n)\\
        \label{diffc}
        Q_{n+1}(X_n,A_n) = Q_n(X_n,A_n) + \rho^{\tilde Q}_n(\mathcal{T}_3(\mu_n, Q_n,\mu^{(X_n,A_n)}_n)+\mathbf{T}_n(X_n,A_n))\mathbbm{1}_{\{X^{(X_n,A_n)}_n=X_n\}}
\end{subnumcases}
for $x\in\mathcal{X},\:a\in\mathcal{A}, n\geq0$, where $(X^{(x,a)}_n,A^{(x,a)}_n)$ and $(X_n, A_n)$ are the state-action pair at time $n$ defined in Algorithm~\ref{algo:U3MFQL}. Comparing to the synchronous environment in the previous section, now we update the $Q$-table at most one state-action pair for each time. As a consequence, the state-action pairs are not all visited at the same frequency and the learning rate needs to be adjusted accordingly. We denote the learning rate for each state-action pair $(x,a)$ as $\rho^Q_{n,x,a}$. The martingale difference sequences are defined  as those in the Section~\ref{sec:syncalgo}, namely~\eqref{mdifference}.

Next, we will establish the convergence of the three-timescale approach with stochastic approximation under asynchronous setting defined in \eqref{diffa}--\eqref{diffc}. 

We will use the following assumption. 
\begin{assumption}\label{dlearningrate}
    For any $(x,a)$, the sequence $(\rho^{\tilde\mu}_{n,x,a})_{n\in \mathbbm{N}}$ satisfy the following conditions: 
     \[\sum_k \rho^{\tilde\mu}_{k,x,a} = \infty\quad \mbox{and} \quad \sum_k |\rho^{\tilde\mu}_{k,x,a}|^2<\infty.\]
\end{assumption}
\begin{assumption}[Ideal tapering stepsize]\label{asynchronouslearningrate}
    For any $(x,a)$, the sequence  $(\rho^Q_{n,x,a})_{n\in \mathbbm{N}}$ satisfies the following conditions: 
    \begin{itemize}
        \item (i) $\sum_k \rho^Q_{k,x,a} = \infty,$ and  $\sum_k |\rho^Q_{k,x,a}|^2<\infty.$
        \item (ii) $\rho^Q_{k+1,x,a}\leq \rho^Q_{k,x,a}$ from some k onwards.
        \item (iii) there exists $r\in(0,1)$ such that
        $
            \sum_{k}(\rho^Q_{k,x,a})^{(1+q)}<\infty, \forall q\geq r.
        $
        \item (iv) for $\xi\in(0,1)$, 
        $
            \sup_{k} \frac{\rho^Q_{[\xi k],x,a}}{\rho^Q_{k,x,a}}<\infty,
        $
        where $[\cdot]$ stands for the integer part.
        \item (v) for $\xi \in(0,1)$, and $D(k):=\sum_{\ell=0}^k \rho^Q_{\ell,x,a}$, 
        $
            \frac{D([yk])}{D(k)}\to 1,
        $
        uniformly in $y\in[\xi,1]$.
    \end{itemize}
\end{assumption}

In the asynchronous setting, we update step-by-step the Q table along the trajectory generated randomly. In order to ensure convergence, every state-action pair needs to be updated repeatedly. We introduce $\nu(x,a,n)=\sum_{m=0}^{n}\mathbbm{1}_{\{(X_m,A_m)=(x,a)\}}$ as the number of times the process $(X_n,A_n)$ visit state $(x,a)$ up to time $n$. 
The following assumptions are related to the (almost sure) good behavior of the number visits along trajectories. 
\begin{assumption}\label{frequentupdate}
    There exists a deterministic $\Delta > 0$ such that for all $(x,a)$, 
\begin{align*}
    \liminf_{n\to\infty} \frac{\nu(x,a,n)}{n}\geq\Delta\:\:\:\:\:a.s.
\end{align*}
Furthermore, letting
$
    N(n,\xi) = \min\left\{m>n:\sum_{k=n+1}^{m}\rho^Q_{k,x,a}>\xi\right\} 
$
for $\xi>0$, the limit 
\begin{align*}
    \lim_{n\to\infty}\frac{\sum_{k=\nu((x,a),n)}^{\nu((x,a),N(n,\xi))}\rho^Q_{k,x,a}}{\sum_{k=\nu((x',a'),n)}^{\nu((x',a'),N(n,\xi))}\rho^Q_{k,x',a'}}
\end{align*}
exists a.s. for all pairs $(x,a)$, $(x',a')$. 
\end{assumption}

\begin{assumption}\label{balanced}
    There exists $d_{ij}>0$, such that for each $i=(x,a)$ and $j=(x',a')$
    \begin{align*}
        \lim_{n\to\infty} \frac{\sum_{m=0}^n \rho^Q_{m,j}}{\sum_{m=0}^n \rho^Q_{m,i}}=d_{ij}.
    \end{align*}
\end{assumption}

\begin{proposition}\label{lyp}
    Suppose Assumptions~\ref{mfcglip}, \ref{mfcglp}, ~\ref{SecondGASE} and  ~\ref{ThirdGASE}  hold. %
    Then for every $\mu$, there exists a strict Lyapunov function for the ODE $\dot Q_t = \mathcal{T}_3(\mu,Q_t,{\tilde\mu^{*\phi,(x,a)}_{Q_t,\mu}})$. 
\end{proposition}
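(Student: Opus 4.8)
The plan is to exhibit the natural candidate Lyapunov function built from the distance to the GASE $Q^{*\phi}_\mu$ and to exploit the contraction property already established for the $Q$-iteration. Fix $\mu$. By Assumption \ref{SecondGASE}, the ODE $\dot Q_t = \mathcal{T}_3(\mu,Q_t,\tilde\mu^{*\phi,(x,a)}_{Q_t,\mu})$ has a unique GASE $Q^{*\phi}_\mu$. The key observation is that, along the fast GASE of the local distributions, the map $Q \mapsto \mathcal{B}_{\mu,\tilde\mu^{*\phi,(x,a)}_{Q,\mu}} Q$ is a contraction in $\|\cdot\|_\infty$: indeed, combining the $\gamma$-contraction of the Bellman operator $\mathcal{B}_{\mu,\tilde\mu}$ in its $Q$-argument (as used in the proof of Proposition \ref{TLipschitz}) with the Lipschitz dependence of $\tilde\mu^{*\phi,(x,a)}_{Q,\mu}$ on $Q$ (Proposition \ref{emu'_MFCG}) and the Lipschitz dependence of $\mathcal{B}_{\mu,\cdot}$ on the local measure (Assumption \ref{mfcglip}), one gets a composite Lipschitz constant; under Assumption \ref{mfcglp} and for $\phi$ in the admissible range this constant is strictly less than $1$. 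Write $G(Q) := \mathcal{T}_3(\mu,Q,\tilde\mu^{*\phi,(x,a)}_{Q,\mu}) = \mathcal{B}_{\mu,\tilde\mu^{*\phi,(x,a)}_{Q,\mu}}Q - Q$, so that $G(Q^{*\phi}_\mu)=0$ and the ODE reads $\dot Q_t = G(Q_t)$.

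The candidate is $\Lambda(Q) := \|Q - Q^{*\phi}_\mu\|_\infty$, or, to get differentiability, a smoothed version; a cleaner route is to follow Borkar--Soumyanatha-style arguments and use $\Lambda(Q) = \limsup_{s\to\infty}\|\Phi_s(Q) - Q^{*\phi}_\mu\|_\infty$ built from the semiflow, but the most transparent is the direct one. First I would show that for the sup-norm one has $\frac{d}{dt}\|Q_t - Q^{*\phi}_\mu\|_\infty \le (L_c - 1)\|Q_t - Q^{*\phi}_\mu\|_\infty$ where $L_c<1$ is the composite contraction constant: at a coordinate $(x_0,a_0)$ achieving the sup (using the upper Dini derivative to handle non-smoothness), $\frac{d}{dt}(Q_t(x_0,a_0) - Q^{*\phi}_\mu(x_0,a_0))$ has the sign of $(Q_t(x_0,a_0)-Q^{*\phi}_\mu(x_0,a_0))$ times $\big(\mathcal{B}_{\mu,\tilde\mu^{*\phi}_{Q_t,\mu}}Q_t(x_0,a_0) - Q_t(x_0,a_0) - (\mathcal{B}_{\mu,\tilde\mu^{*\phi}_{Q^{*\phi}_\mu,\mu}}Q^{*\phi}_\mu(x_0,a_0) - Q^{*\phi}_\mu(x_0,a_0))\big)$, and bounding $|\mathcal{B}_{\mu,\tilde\mu^{*\phi}_{Q_t,\mu}}Q_t - \mathcal{B}_{\mu,\tilde\mu^{*\phi}_{Q^{*\phi}_\mu,\mu}}Q^{*\phi}_\mu|_\infty \le L_c \|Q_t - Q^{*\phi}_\mu\|_\infty$ gives the differential inequality. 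Hence $\Lambda$ (or its square, to smooth out the kink at $0$) is a strict Lyapunov function: $\Lambda(Q^{*\phi}_\mu)=0$, $\Lambda>0$ elsewhere, $\Lambda$ is radially unbounded, and $\frac{d}{dt}\Lambda(Q_t)<0$ whenever $Q_t \ne Q^{*\phi}_\mu$. If a genuinely $C^1$ function is required for the downstream application of Borkar's asynchronous theorem, I would instead invoke the converse Lyapunov theorem for globally asymptotically stable equilibria (e.g. Kurzweil's theorem) to upgrade this to a smooth strict Lyapunov function, since global asymptotic stability of $Q^{*\phi}_\mu$ is exactly Assumption \ref{SecondGASE}.

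The main obstacle is the non-differentiability of the sup-norm combined with the fact that the vector field $G$ is not obviously a gradient field, so one cannot simply read off a Lyapunov function; the work is in justifying the Dini-derivative computation rigorously (the max is over finitely many smooth functions, so the upper right Dini derivative equals the max over the active indices of the coordinate derivatives, which makes this manageable) and, more delicately, in verifying that the composite constant $L_c$ is genuinely $<1$ under Assumptions \ref{mfcglip}--\ref{mfcglp} for the relevant range of $\phi$ — this is the same quantitative issue flagged in the remark after Assumption \ref{ThirdGASE} about needing an upper bound on $\phi$. An alternative that sidesteps the contraction-constant bookkeeping entirely is to not construct $\Lambda$ by hand at all: simply note that Assumption \ref{SecondGASE} asserts global asymptotic stability of an isolated equilibrium for a locally Lipschitz ODE on $\mathbb{R}^{|\mathcal{X}|\times|\mathcal{A}|}$, and apply the converse Lyapunov theorem directly to produce a smooth strict Lyapunov function; Assumptions \ref{mfcglip}, \ref{mfcglp}, \ref{ThirdGASE} then only enter through guaranteeing (via Propositions \ref{TLipschitz} and \ref{emu'_MFCG}) that the right-hand side is indeed locally Lipschitz and that the equilibrium and stability claims are consistent. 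I would present the direct Dini-derivative argument as the main proof and mention the converse-theorem shortcut as a remark.
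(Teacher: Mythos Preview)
Your approach is essentially the paper's: the paper also takes $V(Q)=\|Q-Q^{*\phi}_\mu\|_\infty$ and, in place of the Dini-derivative computation you sketch, simply cites \cite[Case~1, p.~844]{Borkar98} for the fact that this is a strict Lyapunov function for an ODE of the form $\dot Q=h(Q)-Q$ with $h$ a sup-norm contraction. Your worry about whether the composite contraction constant is genuinely below~$1$ under only Assumptions~\ref{mfcglip}--\ref{ThirdGASE} (without the $\phi$-smallness condition of Appendix~\ref{gasecontraction}) is not resolved in the paper's one-line proof either, and your converse-Lyapunov alternative is not used there.
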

\begin{proof}
Let $\tilde{Q}$ be defined by  $\mathcal{T}_3(\mu,\tilde{Q},{\tilde\mu^{*\phi,(x,a)}_{\tilde{Q},\mu}}) = 0$. Let us define $V= \|Q-\tilde{Q}\|_\infty$. By \cite[Case 1, Page 844]{Borkar98}, $V$ is the strict Lyapunov function for the ODE $\dot Q_t = \mathcal{T}_3(\mu,Q_t,{\tilde\mu^{*\phi,(x,a)}_{Q_t,\mu}})$. 
\end{proof}
Then, we have the following theorem.
\begin{theorem}\label{Asyc MFG}
    Suppose Assumptions~\ref{mfcglip}, \ref{mfcglp}, 
    \ref{SecondGASE}, \ref{ThirdGASE}, 
    \ref{squaresumlearningrate_mfcg}, 
    \ref{dlearningrate}, \ref{asynchronouslearningrate}, \ref{frequentupdate} and \ref{balanced} hold. Then, $(\mu_n, Q_n,\tilde\mu_n^{(x,a)})$ defined in \eqref{diffa}--\eqref{diffc} converges to $(\mu^{*\phi},Q^{*\phi}_{\mu^{*\phi}},\tilde\mu^{*\phi,(x,a)}_{Q^{*\phi},\mu^{*\phi}})$ a.s. as $n \to\infty$.
\end{theorem}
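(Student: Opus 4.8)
The plan is to combine the three-timescale singular-perturbation analysis already carried out for Theorems~\ref{thMFCGcvg} and~\ref{Stocastic MFCG} with the asynchronous stochastic approximation framework of~\cite{Borkar98}. First I would put the system \eqref{diffa}--\eqref{diffc} in the canonical stochastic approximation form $z_{n+1}=z_n+\rho_n\big(h(z_n)+M_{n+1}\big)$ on the product space $(\Delta^{|\mathcal X|})^{|\mathcal X|\times|\mathcal A|}\times\mathbbm{R}^{|\mathcal X|\times|\mathcal A|}\times\Delta^{|\mathcal X|}$, with driving vector fields $\tilde{\mathcal P}^{(x,a)}_3$, $\mathcal T_3$, $\mathcal P_3$ and martingale-difference noise $\mathbf P^{(x,a)}_n$, $\mathbf T_n$, $\mathbf P_n$ as in \eqref{mdifference}. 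Because the $Q$-update \eqref{diffc} touches only the pair $(X_n,A_n)$ and is gated by $\mathbbm{1}_{\{X^{(X_n,A_n)}_n=X_n\}}$ — and the local-distribution rates in \eqref{diffa} are indexed by the visit counts of the auxiliary processes — each coordinate effectively runs on its own local clock $\nu(\cdot,\cdot,n)$, so the analysis must be conducted along these rescaled clocks rather than along the global index $n$.

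Second, I would verify the hypotheses of the asynchronous convergence theorem of~\cite{Borkar98}: (a) the iterates are bounded — the distributions live in the simplex and $\|Q_n\|_\infty$ obeys the usual a priori bound since $\|f\|_\infty<\infty$ and $\gamma<1$; (b) the driving fields are Lipschitz — Propositions~\ref{PLipschitz}, \ref{TPLipschitz} and~\ref{TLipschitz}; (c) the stepsizes satisfy the summability and ideal-tapering conditions — Assumptions~\ref{squaresumlearningrate_mfcg}, \ref{dlearningrate} and~\ref{asynchronouslearningrate}(i)--(v), which are precisely the hypotheses of~\cite{Borkar98}; (d) every coordinate is visited with uniformly positive asymptotic frequency and the relative local clocks converge — Assumptions~\ref{frequentupdate} and~\ref{balanced}; and (e) there is a strict Lyapunov function for the intermediate $Q$-ODE at frozen $\mu$ — Proposition~\ref{lyp}. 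The noise is controlled exactly as in Proposition~\ref{squaremartingale}: $\mathbbm{E}[\|\mathbf P_n\|^2],\mathbbm{E}[\|\mathbf P^{(x,a)}_n\|^2]\le1$ and $\mathbf T_n$ is a martingale difference whose quadratic variation is controlled via~\cite{Borkar00}, so square-summability of the rates forces the associated martingales to converge a.s.\ and the noise becomes asymptotically negligible.

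Third, the asynchronous theorem reduces everything to the limiting ODEs, up to multiplication of the $Q$-coordinates by the diagonal matrix $\mathrm{diag}(d_{ij})$ produced by Assumption~\ref{balanced}; since this rescaling affects neither the location of the equilibrium nor its asymptotic stability (the Lyapunov function of Proposition~\ref{lyp} still decreases along the rescaled flow), the asynchronous $Q$-iteration tracks the same ODE $\dot Q_t=\mathcal T_3(\mu,Q_t,\tilde\mu^{*\phi,(x,a)}_{Q_t,\mu})$ as in the synchronous case. One then replays the three-timescale argument underlying Theorem~\ref{Stocastic MFCG}: on the fastest scale $\tilde\mu^{(x,a)}_n$ equilibrates to $\tilde\mu^{*\phi,(x,a)}_{Q_n,\mu_n}$ (Proposition~\ref{emu'_MFCG}); on the intermediate scale $Q_n$ equilibrates to $Q^{*\phi}_{\mu_n}$ (Assumption~\ref{SecondGASE}); and on the slowest scale $\mu_n$ converges to $\mu^{*\phi}$ (Assumption~\ref{ThirdGASE}); giving the a.s.\ limit $(\mu^{*\phi},Q^{*\phi}_{\mu^{*\phi}},\tilde\mu^{*\phi,(x,a)}_{Q^{*\phi},\mu^{*\phi}})$.

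The main obstacle I anticipate is reconciling the asynchronous time-change with the \emph{three}-fold separation of timescales simultaneously: \cite{Borkar98} is formulated for a single asynchronously updated timescale and \cite{Borkar97} for two synchronous ones, so the genuinely new point is to check that after passing to the per-coordinate local clocks the ratios $\rho^\mu_n/\rho^Q_n\to0$ and $\rho^Q_n/\rho^{\tilde\mu}_n\to0$ are preserved uniformly over coordinates — which is exactly what Assumptions~\ref{frequentupdate} and~\ref{balanced} are designed to guarantee — and that the gating indicator $\mathbbm{1}_{\{X^{(X_n,A_n)}_n=X_n\}}$ merely thins the update instants without biasing the conditional drift, hence can be absorbed into the visit counts. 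Making these reductions rigorous, rather than re-running the ODE stability analysis (which is already in place), is where the actual work lies.
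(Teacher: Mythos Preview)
Your proposal is correct and follows essentially the same approach as the paper: verify the Lipschitz, stepsize, visit-frequency, martingale-noise, and Lyapunov hypotheses via Propositions~\ref{PLipschitz}--\ref{emu'_MFCG}, \ref{squaremartingale}, \ref{lyp} and Assumptions~\ref{squaresumlearningrate_mfcg}--\ref{balanced}, then invoke the asynchronous stochastic-approximation convergence theorem of~\cite{Borkar98}. The one addition in the paper is that, for the issue you flag as the main obstacle (combining the asynchronous per-coordinate clocks with the three-fold timescale separation), it appeals directly to \cite[Lemma 4.11]{Konda99} alongside \cite[Theorem 3.1]{Borkar98}, rather than rederiving the preservation of the stepsize ratios by hand.
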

\begin{proof}
It's not hard to check our learning rates defined in \eqref{diffa}--\eqref{diffc} satisfy \ref{squaresumlearningrate_mfcg}, \ref{dlearningrate}, \ref{asynchronouslearningrate}, \ref{frequentupdate} and \ref{balanced}. With the stated assumptions and the results of Propositions \ref{PLipschitz}, \ref{TPLipschitz}, \ref{TLipschitz}, \ref{emu'_MFCG}, \ref{squaremartingale} and \ref{lyp}, the assumptions of \cite[Theorem 3.1]{Borkar98} and \cite[Lemma 4.11]{Konda99} are satisfied. These results guarantee the convergence in the statement.
\end{proof}
\section{Numerical Illustration}\label{sec:example}

We present a very simple example with an explicit solution to the MFCG problem to illustrate the performance of our algorithm.

\subsection{The Model}
Let us consider the state space $\mathcal{X}=\{x_0=0,x_1=1\}$ and the action space $\mathcal{A}=\{stay=0,move=1\}$. The Markovian dynamics is given by
\[ 
    p(x'|x,a) = 
    \begin{cases}
        1-p, & \hbox{ if } x'=a(x) 
        \\
        p, & \hbox{ if } x'\ne a(x),
    \end{cases}
\] 
where $0<p<1$
represents a small noise parameter, and where as such we impose $0<p<0.5$. Note that the dynamics does not depend on the population distributions $\mu$ or $\tilde\mu$, and, therefore, in the notation of Assumption \ref{mfcglip}, $L^{glob}_p=L^{loc}_p=0$.

Let us consider the cost function:
\[
   f(x,a,\mu,\tilde\mu) = x + c_{g}\mu_0 +  c_{l}\tilde\mu_0,
\]
where $c_g>0$ and $c_l>0$, and $\mu_0$ (resp. $\tilde\mu_0$) is the probability to be at $x_0$ under the global distribution $\mu$ (resp. under the local distribution $\tilde\mu$). One may think of $\mu_0$ as $1-\bar{\mu}$, and $\tilde\mu_0$ as $1-\bar{\tilde\mu}$, that is as functions of the means. Note that 
\[
    |f(x,a,\mu,\tilde\mu) - f(x,a,\mu',\tilde\mu)|
    = c_g|\mu_0 - \mu'_0| 
    \le L^{glob}_f \|\mu - \mu'\|_1,
\]
\[
    |f(x,a,\mu,\tilde\mu) - f(x,a,\mu,\tilde\mu')|
    = c_l|\tilde\mu_0 - \tilde\mu'_0| 
    \le L^{loc}_f \|\tilde\mu - \tilde\mu'\|_1,
\]
with $L^{glob}_f=c_{g}/2$ and $L^{loc}_f= c_{l}/2$.

Recall that $c_{min}$ is defined in \eqref{def:cmin}. In this example we have:
\[
    c_{min}
    = \min_{x,x',a,\mu,\tilde\mu} p(x'|x,a,\mu,\tilde\mu)
    = p>0.
\]      
Therefore, in this example Assumptions~\ref{mfcglip} and \ref{mfcglp} are satisfied.

\subsection{Theoretical Solutions}
There are 4 pure policies $\alpha$ given by: 
$
\{(s,s),(s,m), (m,s), (m,m)\},
$
associated to the corresponding limiting distributions $\mu^\alpha$: 
$
\{(\frac{1}{2},\frac{1}{2}), (1-p,p), (p,1-p), (\frac{1}{2},\frac{1}{2})\}.
$

As in Section \ref{sec:Q-MFCG}, for a strategy $\alpha$, we define the strategy $\tilde{\alpha}^{(x,a)}$ by:
\begin{align*}
    \tilde{\alpha}^{(x,a)} (x') := \begin{cases}
        a\hspace{3mm} \text{if}\hspace{5mm} x'=x, \\
        \alpha(x')\hspace{2mm} \text{for}\hspace{3mm} x'\neq x.
    \end{cases} 
\end{align*}

For a given distribution $\mu$, the Bellman equation \eqref{MKV-Bellman} reads:
\[
Q^*_\mu(x,a)=f(x,a,\mu,\mu^{\tilde{\alpha^*}(x,a)})+\gamma\left[ (1-p)Q^*_\mu(a(x),s)\wedge Q^*_\mu(a(x),m) +p Q^*_\mu(1-a(x),s)\wedge Q^*_\mu(1-a(x),m)\right].
\]
When $c_{l}$ large enough ($c_l>2\gamma$), one can check that the optimal strategy is $\alpha^*=(m,s)$ which corresponds to the case 
\[
Q^*(0,m)<Q^*(0,s),\quad Q^*(1,s)<Q^*(1,m),
\]
and to the distribution $\mu^*=\mu^{\alpha^*}=(p,1-p)$.
The optimal Q-values are given by
\[
Q^*(0,m)=\frac{c_gp}{1-\gamma}+\frac{c_lp-\gamma p+\gamma}{1-\gamma}, \quad Q^*(1,s)=Q^*(0,m) +1,
\]
\[
Q^*(0,s)=\frac{c_gp}{1-\gamma}+\frac{c_l}{2}+\gamma\frac{c_lp-2\gamma p+\gamma +p}{1-\gamma},\quad Q^*(1,m)=Q^*(0,s)+1,
\]
so that $Q^*(0,s)-Q^*(0,m)=Q^*(1,m)-Q^*(1,s)=\frac{1}{2}(1-2p)(c_l-2\gamma)>0$ as soon as $c_l>2\gamma$.
Finally, note that $V^*(0)=Q^*(0,m)$ and $V^*(1)=Q^*(1,s)$.

Under our model,  for fixed $\mu$, the MFCG problem can be considered as a MFC problem. The global distribution $\mu$ will only shift the Q-table from the corresponding MFC problem by $\frac{c_g \mu_0}{1-\gamma}$. In \cite[Appendix D]{andrea23}, we proved that for such a model there is a unique optimal strategy $(m,s)$ for such MFC problem, so that we have unique solution to our MFCG problem. So far, we have proved Assumption is \ref{MFCGuniqueness} is satisfied.
\subsection{MFCG Assumptions \ref{SecondGASE}, \ref{ThirdGASE} and the gap in Theorem \ref{th:mainMFCG} }
We use a $\softmin$  policy $\pi$ defined by $\pi(a|x)={\softmin}_\phi Q(x)(a)$.
For any given $Q$,  the four limiting distributions (denoted $\tilde\mu^{*\phi, (x,a)}_{Q,\mu}$ in Proposition \ref{emu'_MFCG}) corresponding to the four state-action paris are characterized by:
\[
\tilde\mu_1^{(0,s)} = \frac{p}{1-(1-2p)\pi(s|1)}, \quad \tilde\mu_1^{(0,m)} =\frac{1-p}{2-2p-(1-2p)\pi(s|1)},
\]
\[
\tilde\mu_1^{(1,s)} = \frac{1-p - (1-2p)\pi(s|0)}{1 - (1-2p)\pi(s|0)}, \quad \tilde\mu_1^{(1,m)} =\frac{1-p - (1-2p)\pi(s|0)}{2-2p - (1-2p)\pi(s|0)}.
\]
Then, we compute the GASE $Q_\mu^{*\phi}$ in Assumption \ref{SecondGASE} by solving $\mathcal{T}_3(\mu,Q_\mu^{*\phi},\tilde\mu^{*\phi, (x,a)}_{Q_\mu^{*\phi},\mu}) = 0$. The uniqueness is proved by a similar argument as in \cite{andrea23} Section 5.2.4.. Their values are given by:
\begin{align*}
Q_\mu^{*\phi}(0,m) &= \frac{c_g\mu_0+c_l\tilde\mu_0^{{*\phi}(0,m)}+\gamma(1-p)(1-c_l\tilde\mu_0^{{*\phi}(0,m)} + c_l\tilde\mu_0^{{*\phi}(1,s)})}{1-\gamma}\\ Q_\mu^{*\phi}(1,s)&=Q_\mu^{*\phi}(0,m)+1-c_l\tilde\mu_0^{{*\phi}(0,m)} + c_l\tilde\mu_0^{{*\phi}(1,s)}\\
Q_\mu^{*\phi}(0,s) &= c_g\mu_0+c_l\tilde\mu_0^{{*\phi}(0,s)}+\gamma\frac{c_g\mu_0+c_l\tilde\mu_0^{{*\phi}(0,m)}+\gamma(1-p)(1-c_l\tilde\mu_0^{{*\phi}(0,m)} + c_l\tilde\mu_0^{{*\phi}(1,s)})}{1-\gamma}\\
&+\gamma p(1-c_l\tilde\mu_0^{{*\phi}(0,m)} + c_l\tilde\mu_0^{{*\phi}(1,s)})\\
Q_\mu^{*\phi}(1,m)&=Q_{\mu}^{*\phi}(0,s)+1-c_l\tilde\mu_0^{{*\phi}(0,s)} + c_l\tilde\mu_0^{{*\phi}(1,m)}
\end{align*}
Finally, we can compute the GASE $\mu^{*\phi}$ in Assumption \ref{ThirdGASE} by solving $\mathcal{P}_3 = 0$. Since we have two states, it is enough to write this equation at one point, say $x=1$. Since the global distribution $\mu$ affects Q only as an additive shift, the policy $\pi$ given by $Q_\mu^{*\phi}$ is independent of $\mu$, and we obtain the equation:

\begin{align*}
    \mu^{*\phi}_1 & = (1-\mu^{*\phi}_1)\left[\pi(s|0)p(1|0,s) + \pi(m|0)p(1|0,m)\right] + \mu^{*\phi}_1\left[\pi(s|1)p(1|1,s) + \pi(m|1)p(1|1,m)\right] \\
          &=  (1-\mu^{*\phi}_1)\left[p\pi(s|0)+ (1-p)\pi(m|0)\right] + \mu^{*\phi}_1\left[ (1-p)\pi(s|1)+p\pi(m|1)\right]\\
          &= (p\pi(s|0)+ (1-p)\pi(m|0)) - \mu^{*\phi}_1(p\pi(s|0)+ (1-p)\pi(m|0)) + \mu^{*\phi}_1((1-p)\pi(s|1)+p\pi(m|1))\\
          &= (p\pi(s|0)+ (1-p)\pi(m|0)) - \mu^{*\phi}_1(p\pi(s|0)+ (1-p)\pi(m|0)-(1-p)\pi(s|1)-p\pi(m|1))
\end{align*}
This is a linear equation which has a unique solution. One can also prove this is a GASE because $L(\mu_1)=(\mu_1-\mu^{*\phi}_1)^2$ is 
a Lyapunov function for the ODE $\dot{\mu_1} = \mathcal{P}_3(\mu_1)$ since $L'( \mu_1)g(\mu_1)<0$ and $L'(\mu^{*\phi}_1)g(\mu^{*\phi}_1)=0$. Repeating the argument with $\mu^{*\phi}_0=1-\mu^{*\phi}_1$, one deduces that $\mu^{*\phi}$ is the unique GASE in Assumption \ref{ThirdGASE}.

Regarding the gap in Theorem \ref{th:mainMFCG}, we have:
\[\delta(\phi) = \min\left(Q^{*\phi}_{\mu^{*\phi}}(0,m)-Q^{*\phi}_{\mu^{*\phi}}(0,s),Q^{*\phi}_{\mu^{*\phi}}(1,s)-Q^{*\phi}_{\mu^{*\phi}}(1,m)\right)
\]
and $\lim_{\phi\to\infty} \delta(\phi) = Q^{*}(0,m)-Q^{*}(0,s) = Q^{*}(1,s)-Q^{*}(1,m) = \frac{1}{2}(1-2p)(c_l-2\gamma)>0$ as soon as $c_l>2\gamma$ and $p<0.5$.

\subsection{Numerical Results}\label{nr}
We consider the problem defined above with the choice of parameters: $N =2$ states, $p=0.1$, $c_l=c_g=5$, $ \phi=500$, $ \gamma=0.5$. 
We run the deterministic Algorithm \eqref{DEs} with $(\omega^{\tilde\mu},\omega^Q,\omega^{\mu}) = (0.55,0.75,0.95)$. This choice satisfies the assumptions that we used to prove convergence to the MFCG solution. Figure~\ref{odemfcg} illustrates the result.
We provide the convergence plot of $\mu_n(x_0)$, i.e., the value of the distribution at state 0, as a function of the step $n$. The y-axis is the value of $\mu_n(x_0)$, $\mu^{\alpha^*}_n(x_0)$ and x-axis is the number of iterations.  

\begin{figure}[H]
\center 
\subfloat[MFCG: $(\omega^{\tilde\mu},\omega^Q,\omega^{\mu}) = (0.55,0.75,0.95)$]{\includegraphics[width=0.5\textwidth]{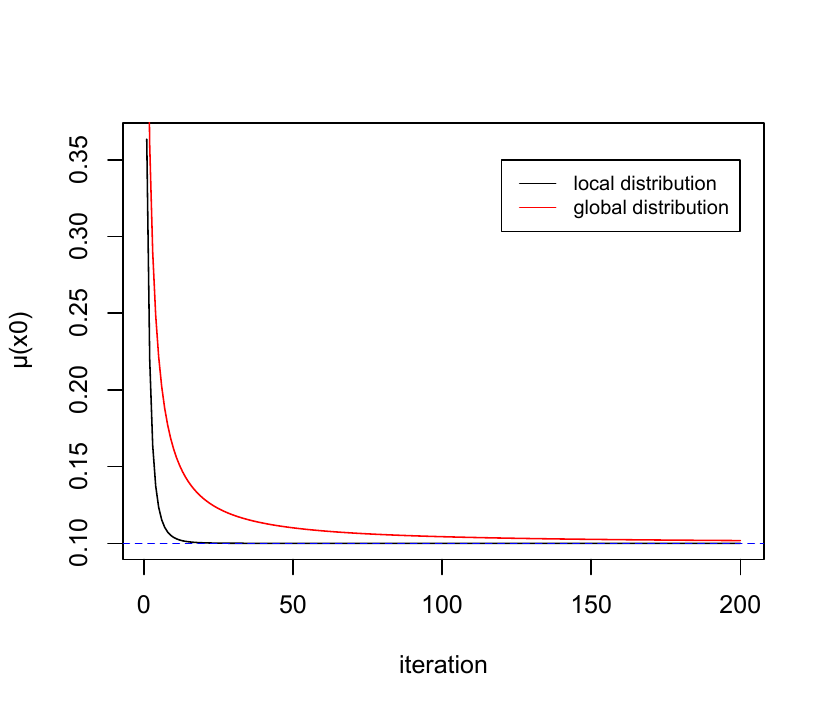}}
\caption{MFCG setting: Convergence of the distribution. The plot represents the value of $\mu_n(x_0)$ and $\mu^{\alpha^*}_n(x_0)$ as a function of $n$.}
\label{odemfcg}
\end{figure}
The resulting distribution $\mu=\mu^{\alpha^*}=(\mu(x_0),\mu(x_1))$ which is about  $(0.1,0.9)=(p,1-p)$, corresponds to the equilibrium distribution in the  MFCG scenario with optimal strategy $\alpha^*=(m,s)$ given by the limiting Q-table:

\begin{center}
\begin{tabular}{ |p{3cm}||p{2cm}|p{2cm}|p{2cm}|p{2cm}|  }
 \hline
 & 
 \multicolumn{2}{|c|}{Q-table} 
 &                                            %
\multicolumn{2}{|c|}{Theoretical Values} \\
 \hline
 States/Actions & Action 0 &Action 1 & Action 0 & Action 1\\
 \hline
 State 0  &4.501 & 2.901  &4.5 &2.9\\
 \hline 
 State 1  & 3.901& 5.501 &3.9& 5.5    \\

\hline
\end{tabular}
\end{center}
We  also run the algorithms \ref{algo:U3MFQL} for this example with the same choice of learning rates. Figure~\ref{fig:example-mfcg1} illustrates the result.
We provide the convergence plot of $\mu_n(x_0)$, i.e., the value of the distribution at state 0, as a function of the step $n\leq 1000K$. The y-axis is the value of $\mu_n(x_0)$, $\mu^{\alpha^*}_n(x_0)$ and x-axis is the number of iterations.  

\begin{figure}[H]
\center 
\subfloat[MFCG: $(\omega^{\tilde\mu}, \omega^Q, \omega^{\mu})=(0.55,0.75, 0.95)$, first $2$K iterations]
{\includegraphics[width=0.4\textwidth]{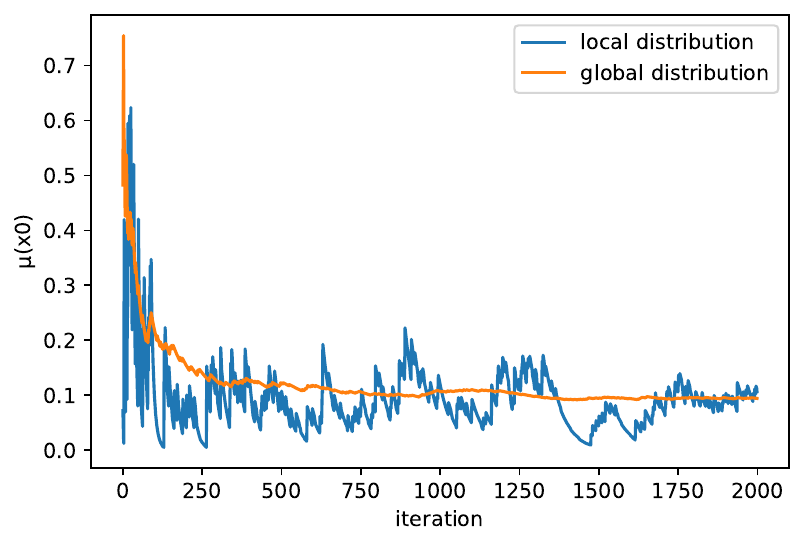}\label{fig:MFCG}}\qquad\qquad
\subfloat[MFCG: $(\omega^{\tilde\mu}, \omega^Q, \omega^{\mu})=(0.55,0.75, 0.95)$, for $1000$K iterations]{\includegraphics[width=0.4\textwidth]{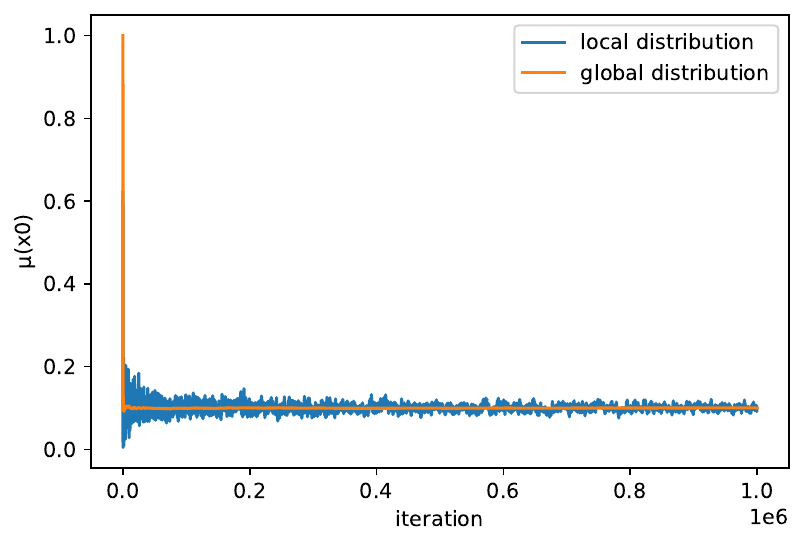}\label{fig:MFCG1}}
\caption{MFCG setting: Convergence of the distribution. The plot represents the value of $\mu_n(x_0)$ and $\mu^{\alpha^*}_n(x_0)$ as a function of $n$.  \label{fig:example-mfcg1}}
\end{figure}

As expected, figure \ref{fig:MFCG} shows that local distribution converges faster than the global distribution but have more fluctuations since local distribution is updated faster than the global distribution. Figure \ref{fig:MFCG1} shows eventually they converge perfectly. 

The resulting distribution $\mu=\mu^{\alpha^*}=(\mu(x_0),\mu(x_1))$ which is about  $(0.1,0.9)=(p,1-p)$, corresponds to the equilibrium distribution in the  MFCG scenario with optimal strategy $\alpha^*=(m,s)$ given by the limiting Q-table:

\begin{center}
\begin{tabular}{ |p{3cm}||p{2cm}|p{2cm}|p{2cm}|p{2cm}|  }
 \hline
 & 
 \multicolumn{2}{|c|}{Q-table} 
 &                                            %
\multicolumn{2}{|c|}{Theoretical Values} \\
 \hline
 States/Actions & Action 0 &Action 1 & Action 0 & Action 1\\
 \hline
 State 0  &4.334 & 2.907  &4.5 &2.9\\
 \hline 
 State 1  & 3.913& 5.477 &3.9& 5.5    \\

\hline
\end{tabular}
\end{center}

\bibliographystyle{apalike}
\bibliography{bibtex}

\begin{thebibliography}{}

\bibitem[Angiuli et~al., 2022a]{andrea22}
Angiuli, A., Detering, N., Fouque, J.-P., Lauri{\`e}re, M., and Lin, J.
  (2022a).
\newblock Reinforcement learning algorithm for mixed mean field control games.
\newblock {\em arXiv:2205.02330}.

\bibitem[Angiuli et~al., 2022b]{Andrea20}
Angiuli, A., Fouque, J.-P., and Lauri{\`e}re, M. (2022b).
\newblock Unified reinforcement q-learning for mean field game and control
  problems.
\newblock {\em Mathematics of Control, Signals, and Systems}, 34:217--271.

\bibitem[Angiuli et~al., 2023]{andrea23}
Angiuli, A., Fouque, J.-P., Lauri{\`e}re, M., and Zhang, M. (2023).
\newblock Convergence of multiscale reinforcement q-learning algorithms for
  mean field game and control problems.
\newblock {\em arXiv:2312.06659}.

\bibitem[Bensoussan et~al., 2013]{MR3134900}
Bensoussan, A., Frehse, J., and Yam, S. C.~P. (2013).
\newblock {\em Mean field games and mean field type control theory}.
\newblock Springer Briefs in Mathematics. Springer, New York.

\bibitem[Borkar and Soumyanatha, 1997]{563625}
Borkar, V. and Soumyanatha, K. (1997).
\newblock An analog scheme for fixed point computation. i. theory.
\newblock {\em IEEE Transactions on Circuits and Systems I: Fundamental Theory
  and Applications}, 44(4):351--355.

\bibitem[Borkar, 1997]{Borkar97}
Borkar, V.~S. (1997).
\newblock Stochastic approximation with two time scales.
\newblock {\em Systems \& Control Letters}, 29(5):291--294.

\bibitem[Borkar, 1998]{Borkar98}
Borkar, V.~S. (1998).
\newblock Asynchronous stochastic approximations.
\newblock {\em SIAM Journal on Control and Optimization}, 36(3):840--851.

\bibitem[Borkar and Meyn, 2000]{Borkar00}
Borkar, V.~S. and Meyn, S.~P. (2000).
\newblock The o.d.e. method for convergence of stochastic approximation and
  reinforcement learning.
\newblock {\em SIAM Journal on Control and Optimization}, 38(2):447--469.

\bibitem[Carmona and Delarue, 2018]{carmona2018probabilisticI-II}
Carmona, R. and Delarue, F. (2018).
\newblock {\em Probabilistic Theory of Mean Field Games with Applications
  I-II}.
\newblock Springer.

\bibitem[Carmona et~al., 2019]{CarmonaLauriereTan-2019-LQMFRL}
Carmona, R., Lauri{\`e}re, M., and Tan, Z. (2019).
\newblock Linear-quadratic mean-field reinforcement learning: Convergence of
  policy gradient methods.
\newblock Preprint.

\bibitem[Carmona et~al., 2023]{carmona2023model}
Carmona, R., Lauri{\`e}re, M., and Tan, Z. (2023).
\newblock Model-free mean-field reinforcement learning: mean-field mdp and
  mean-field q-learning.
\newblock {\em The Annals of Applied Probability}, 33(6B):5334--5381.

\bibitem[Cui and Koeppl, 2021]{cui2021approximately}
Cui, K. and Koeppl, H. (2021).
\newblock Approximately solving mean field games via entropy-regularized deep
  reinforcement learning.
\newblock In {\em International Conference on Artificial Intelligence and
  Statistics}, pages 1909--1917. PMLR.

\bibitem[Elie et~al., 2020]{elie2020convergence}
Elie, R., Perolat, J., Lauri{\`e}re, M., Geist, M., and Pietquin, O. (2020).
\newblock On the convergence of model free learning in mean field games.
\newblock In {\em in proc. of AAAI}.

\bibitem[Frikha et~al., 2023]{frikha2023actor}
Frikha, N., Germain, M., Lauri{\`e}re, M., Pham, H., and Song, X. (2023).
\newblock Actor-critic learning for mean-field control in continuous time.
\newblock {\em arXiv preprint arXiv:2303.06993}.

\bibitem[Gu et~al., 2021]{gu2021meanQ}
Gu, H., Guo, X., Wei, X., and Xu, R. (2021).
\newblock Mean-field controls with q-learning for cooperative marl: convergence
  and complexity analysis.
\newblock {\em SIAM Journal on Mathematics of Data Science}, 3(4):1168--1196.

\bibitem[Guo et~al., 2019]{guo2019learning}
Guo, X., Hu, A., Xu, R., and Zhang, J. (2019).
\newblock Learning mean-field games.
\newblock In {\em Advances in Neural Information Processing Systems}, pages
  4966--4976.

\bibitem[Huang et~al., 2006]{MR2346927}
Huang, M., Malham{\'e}, R.~P., and Caines, P.~E. (2006).
\newblock Large population stochastic dynamic games: closed-loop
  {M}c{K}ean-{V}lasov systems and the {N}ash certainty equivalence principle.
\newblock {\em Commun. Inf. Syst.}, 6(3):221--251.

\bibitem[Konda and Borkar, 1999]{Konda99}
Konda, V.~R. and Borkar, V.~S. (1999).
\newblock Actor-critic--type learning algorithms for markov decision processes.
\newblock {\em SIAM Journal on Control and Optimization}, 38(1):94--123.

\bibitem[Lasry and Lions, 2007]{MR2295621}
Lasry, J.-M. and Lions, P.-L. (2007).
\newblock Mean field games.
\newblock {\em Jpn. J. Math.}, 2(1):229--260.

\bibitem[Lauri{\`e}re et~al., 2022]{lauriere2022learning}
Lauri{\`e}re, M., Perrin, S., Geist, M., and Pietquin, O. (2022).
\newblock Learning mean field games: A survey.
\newblock {\em arXiv preprint arXiv:2205.12944}.

\bibitem[Motte and Pham, 2019]{motte2019mean}
Motte, M. and Pham, H. (2019).
\newblock Mean-field markov decision processes with common noise and open-loop
  controls.
\newblock {\em arXiv preprint arXiv:1912.07883}.

\bibitem[Neveu, 1975]{neveu1975discrete}
Neveu, J. (1975).
\newblock {\em Discrete-parameter Martingales}.
\newblock North-Holland mathematical library. North-Holland.

\bibitem[Sell, 1973]{SELL197342}
Sell, G.~R. (1973).
\newblock Differential equations without uniqueness and classical topological
  dynamics.
\newblock {\em Journal of Differential Equations}, 14(1):42--56.

\bibitem[Subramanian and Mahajan, 2019]{SubramanianMahajan-2018-RLstatioMFG}
Subramanian, J. and Mahajan, A. (2019).
\newblock Reinforcement learning in stationary mean-field games.
\newblock In {\em Proceedings. 18th International Conference on Autonomous
  Agents and Multiagent Systems}.

\bibitem[Sutton and Barto, 2018]{sutton2018reinforcement}
Sutton, R.~S. and Barto, A.~G. (2018).
\newblock {\em Reinforcement learning: An introduction}.
\newblock MIT press.

\bibitem[Tembine, 2017]{tembine2017mean}
Tembine, H. (2017).
\newblock Mean-field-type games.
\newblock {\em AIMS Math}, 2(4):706--735.

\bibitem[Wang et~al., 2020]{wang2020reinforcement}
Wang, H., Zariphopoulou, T., and Zhou, X.~Y. (2020).
\newblock Reinforcement learning in continuous time and space: A stochastic
  control approach.
\newblock {\em Journal of Machine Learning Research}, 21(198):1--34.

\bibitem[Wang and Zhou, 2020]{wang2020continuous}
Wang, H. and Zhou, X.~Y. (2020).
\newblock Continuous-time mean--variance portfolio selection: A reinforcement
  learning framework.
\newblock {\em Mathematical Finance}, 30(4):1273--1308.

\bibitem[Watkins, 1989]{watkins1989learning}
Watkins, C. J. C.~H. (1989).
\newblock {\em Learning from delayed rewards}.
\newblock PhD thesis, King's College, Cambridge.

\end{thebibliography}

\appendix
\section{GASE by Contraction Argument}\label{gasecontraction}
To prove that the GASE exist for the second and third O.D.E. in (\ref{DEs}), we need to control the parameter $\phi$ properly to have a strict contraction. To that end, we introduce the following assumption.
\begin{assumption}\label{controlphicg}
    Assume $$
        0<\phi < \min\left(\frac{|\mathcal{X}|c_{min}-L_{p}^{total}}{|\mathcal{A}|}\frac{1-\gamma}{L_{f}^{total} + \frac{\gamma}{1-\gamma} L_{p}^{total}\|f\|_\infty}, \frac{(|\mathcal{X}|c_{min}-L_{p}^{glob})^2}{\phi|\mathcal{A}|(|\mathcal{X}|c_{min} -L_{p}^{glob}+ L_{p}^{loc})}\frac{1-\gamma}{L_{f}^{glob} + \frac{\gamma}{1-\gamma} L_{p}^{glob}\|f\|_\infty}\right),
    $$
    where $L_p^{total} = L_p^{glob} + L_p^{loc}$ and $L_f^{total} = L_f^{glob} + L_f^{loc}$.
\end{assumption}
 With this assumption, we have $$\gamma + (L_{f}^{loc} + \gamma L_{p}^{loc}\|Q\|_\infty)\frac{\phi|\mathcal{A}|}{|\mathcal{X}|c_{min}-L_{p}^{loc}} <1,$$ and  $$\frac{\phi |\mathcal{A}|(|\mathcal{X}|c_{min} -L_{p}^{glob}+ L_{p}^{loc})}{|\mathcal{X}|c_{min}-L_{p}^{glob}}\frac{L_f^{glob} + \gamma L_p^{glob}\|Q^*_{\mu}\|_\infty}{1-\gamma} + L_{p}^{glob} + 1-|\mathcal{X}|c_{min}<1,$$ which will give the strict contraction property we need in the next propositions. 
\begin{proposition}\label{eQ_MFCG}
    Suppose Assumption \ref{mfcglip} and \ref{controlphicg} hold. Then for any given $\mu$, $\dot{Q}_t = \mathcal{T}_3(\mu,Q_t,{\mu^{*\phi}_{Q_t,\mu}})$ has a unique GASE, that we will denote by $Q^{*\phi}_\mu$.
\end{proposition}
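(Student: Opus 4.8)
The plan is to mirror the proof of Proposition~\ref{emu'_MFCG}: recast the equilibrium of the second ODE as a fixed point of an operator on $(\mathbbm{R}^{|\mathcal{X}|\times|\mathcal{A}|},\|\cdot\|_\infty)$, show that this operator is a strict contraction thanks to the first term in the minimum defining Assumption~\ref{controlphicg}, and then invoke the contraction mapping theorem \cite{SELL197342} together with \cite[Theorem 3.1]{563625} to upgrade the unique fixed point to a GASE. Fix $\mu$ and define, for $(x,a)\in\mathcal{X}\times\mathcal{A}$,
\[
(\mathcal{G}_\mu Q)(x,a) := f\bigl(x,a,\mu,\tilde\mu^{*\phi,(x,a)}_{Q,\mu}\bigr) + \gamma\sum_{x'\in\mathcal{X}} p\bigl(x'|x,a,\mu,\tilde\mu^{*\phi,(x,a)}_{Q,\mu}\bigr)\min_{a'}Q(x',a'),
\]
where $\tilde\mu^{*\phi,(x,a)}_{Q,\mu}$ is the fast-scale GASE furnished by Proposition~\ref{emu'_MFCG} (abbreviated $\mu^{*\phi}_{Q,\mu}$ in the statement). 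Then $\mathcal{T}_3(\mu,Q,\tilde\mu^{*\phi,(x,a)}_{Q,\mu})(x,a) = (\mathcal{G}_\mu Q)(x,a) - Q(x,a)$, so the ODE in the statement is exactly $\dot Q_t = \mathcal{G}_\mu(Q_t) - Q_t$, whose equilibria are precisely the fixed points of $\mathcal{G}_\mu$.

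Next I would establish that $\mathcal{G}_\mu$ is a strict $\|\cdot\|_\infty$-contraction on the ball $B := \{Q:\|Q\|_\infty \le \|f\|_\infty/(1-\gamma)\}$, which is closed (hence complete) and forward-invariant both for $\mathcal{G}_\mu$ (since $\|\mathcal{G}_\mu Q\|_\infty \le \|f\|_\infty + \gamma\|Q\|_\infty$) and for the flow. For $Q,Q'\in B$, decompose $\mathcal{G}_\mu Q - \mathcal{G}_\mu Q'$ into a \emph{direct} part, obtained by replacing $\min_{a'}Q(x',a')$ by $\min_{a'}Q'(x',a')$ while keeping $\tilde\mu^{*\phi,(x,a)}_{Q,\mu}$ fixed, which contributes at most $\gamma\|Q-Q'\|_\infty$ (because $p(\cdot|x,a,\mu,\tilde\mu)$ is a probability vector and $z\mapsto\min_i z_i$ is $1$-Lipschitz in $\|\cdot\|_\infty$); and an \emph{indirect} part, coming from the change $\tilde\mu^{*\phi,(x,a)}_{Q,\mu}\to\tilde\mu^{*\phi,(x,a)}_{Q',\mu}$. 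For the indirect part I would combine the Lipschitz bounds of $f$ and $p$ in the local argument from Assumption~\ref{mfcglip} (constants $L_f^{loc}$, $L_p^{loc}$), the a priori bound $\|\min_{a'}Q(\cdot,a')\|_\infty \le \|f\|_\infty/(1-\gamma)$ available on $B$ to control the term $\|p(\cdot|x,a,\mu,\tilde\mu^{*\phi,(x,a)}_{Q,\mu}) - p(\cdot|x,a,\mu,\tilde\mu^{*\phi,(x,a)}_{Q',\mu})\|_1\,\|\min_{a'}Q(\cdot,a')\|_\infty$, and the Lipschitz constant of $Q\mapsto\tilde\mu^{*\phi,(x,a)}_{Q,\mu}$, namely $\phi|\mathcal{A}|/(|\mathcal{X}|c_{min}-L_p^{loc})$, which is extracted from the estimate inside the proof of Proposition~\ref{emu'_MFCG}. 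Collecting the two parts yields
\[
\|\mathcal{G}_\mu Q - \mathcal{G}_\mu Q'\|_\infty \le \Bigl(\gamma + \bigl(L_f^{loc} + \tfrac{\gamma}{1-\gamma}L_p^{loc}\|f\|_\infty\bigr)\tfrac{\phi|\mathcal{A}|}{|\mathcal{X}|c_{min}-L_p^{loc}}\Bigr)\|Q-Q'\|_\infty,
\]
and the first term in the minimum defining Assumption~\ref{controlphicg} is precisely what forces the bracket to be $<1$, using $L_f^{loc}\le L_f^{total}$, $L_p^{loc}\le L_p^{total}$ and $|\mathcal{X}|c_{min}-L_p^{total}\le|\mathcal{X}|c_{min}-L_p^{loc}$; this is the inequality $\gamma + (L_f^{loc}+\gamma L_p^{loc}\|Q\|_\infty)\tfrac{\phi|\mathcal{A}|}{|\mathcal{X}|c_{min}-L_p^{loc}}<1$ recorded right after Assumption~\ref{controlphicg}, with $\|Q\|_\infty$ replaced by its bound $\|f\|_\infty/(1-\gamma)$ on $B$.

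Finally I would conclude as follows. By the contraction mapping theorem \cite{SELL197342}, $\mathcal{G}_\mu$ has a unique fixed point $Q^{*\phi}_\mu\in B$; since any equilibrium $Q$ of the ODE satisfies $Q=\mathcal{G}_\mu Q$ and hence $\|Q\|_\infty\le\|f\|_\infty+\gamma\|Q\|_\infty$, which forces $Q\in B$, $Q^{*\phi}_\mu$ is the unique equilibrium overall; and since $\mathcal{G}_\mu$ is a contraction on the forward-invariant $B$, \cite[Theorem 3.1]{563625} applied to $\dot Q_t=\mathcal{G}_\mu(Q_t)-Q_t$ shows $Q_t\to Q^{*\phi}_\mu$ for every initial condition, i.e.\ $Q^{*\phi}_\mu$ is the GASE. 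The main obstacle I anticipate is the bookkeeping in the indirect part: one must be careful that the perturbation of $p$ is multiplied by $\min_{a'}Q(x',a')$ rather than by a difference of $Q$'s, so the contraction is only available on the bounded ball $B$, and one must then separately (and easily) observe that every equilibrium lies in $B$ so that uniqueness and global stability are not lost; chaining the three Lipschitz constants (of $f$, of $p$, and of $\tilde\mu^{*\phi,(x,a)}_{Q,\mu}$) in the correct local arguments is the only genuine computation. Note that this proof uses only the first of the two terms in Assumption~\ref{controlphicg}; the second term is reserved for the slow ODE governing $\mu^{*\phi}$.
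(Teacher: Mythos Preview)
Your approach is essentially identical to the paper's: define the composite Bellman operator $Q\mapsto\mathcal{B}_{\mu,\tilde\mu^{*\phi,(x,a)}_{Q,\mu}}Q$ (your $\mathcal{G}_\mu$), split the difference into a direct Bellman contraction $\gamma\|Q-Q'\|_\infty$ and an indirect term controlled by the Lipschitz constant $\phi|\mathcal{A}|/(|\mathcal{X}|c_{min}-L_p^{loc})$ from Proposition~\ref{emu'_MFCG}, then invoke \cite{SELL197342} and \cite[Theorem 3.1]{563625}. The paper's own proof obtains exactly the same contraction constant, written as $\gamma + (L_f^{loc}+\gamma L_p^{loc}\|Q\|_\infty)\frac{\phi|\mathcal{A}|}{|\mathcal{X}|c_{min}-L_p^{loc}}$, and stops there.

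Your proposal is in fact slightly more careful than the paper on one point: the contraction constant contains $\|Q\|_\infty$ (or $\|Q'\|_\infty$), so the map is not a global strict contraction unless one first restricts to a forward-invariant bounded set. You make this explicit by working on the ball $B=\{\|Q\|_\infty\le\|f\|_\infty/(1-\gamma)\}$, verifying that $\mathcal{G}_\mu$ and the flow preserve $B$, and observing that any equilibrium necessarily lies in $B$; the paper leaves this step implicit.
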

\begin{proof}

    We show that $Q \mapsto \mathcal{B}_{\mu,\mu^{*\phi}_Q}Q$ is a strict contraction. We have:
    \begin{align*}
        \|\mathcal{B}_{\mu,\mu^{*\phi}_Q}Q - \mathcal{B}_{\mu,\mu^{*\phi}_{Q'}}Q'\|_\infty &\leq \|\mathcal{B}_{\mu,\mu^{*\phi}_Q}Q - \mathcal{B}_{\mu,\mu^{*\phi}_{Q}}Q'\|_\infty+\|\mathcal{B}_{\mu,\mu^{*\phi}_Q}Q' - \mathcal{B}_{\mu,\mu^{*\phi}_{Q'}}Q'\|_\infty \\
        &\leq \gamma \|Q-Q'\|_\infty + (L_f^{loc} + \gamma L_p^{loc}\|Q\|_\infty)\|\mu^{*\phi}_Q - \mu^{*\phi}_{Q'}\|_1 \\
        &\leq \gamma \|Q-Q'\|_\infty + (L_f^{loc} + \gamma L_p^{loc}\|Q\|_\infty)\frac{\phi|\mathcal{A}|}{|\mathcal{X}|c_{min}-L_p}\|Q-Q'\|_\infty\\
        &\leq \left(\gamma + (L_f^{loc} + \gamma L_p^{loc}\|Q\|_\infty)\frac{\phi|\mathcal{A}|}{|\mathcal{X}|c_{min}-L_p^{loc}}\right)\|Q-Q'\|_\infty,
    \end{align*}
    where we used the bound derived in the proof of Proposition~\ref{emu'_MFCG}. 
    This shows the strict contraction property. As a result, by the contraction mapping theorem \cite{SELL197342}, a unique GASE exists and furthermore by \cite[Theorem 3.1]{563625}, it is the limit of $Q_t$.
\end{proof}

\begin{proposition}\label{emu_MFCG}
    Under assumptions \ref{mfcglip} and \ref{controlphicg}, $\dot{\mu}_t = \mathcal{P}_3(\mu_t,Q^{*\phi}_{\mu_t},\tilde\mu^{*\phi}_{Q^{*\phi}_{\mu_t},\mu_t})$ has a unique GASE, that we will denote by $\mu^{*\phi}$.
\end{proposition}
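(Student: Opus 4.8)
The plan is to follow the template of Propositions~\ref{emu'_MFCG} and~\ref{eQ_MFCG}: exhibit a self-map $\Phi$ of the simplex $\Delta^{|\mathcal{X}|}$ whose fixed points are exactly the equilibria of the ODE, show that $\Phi$ is a strict contraction for the $L^1$ norm, and then invoke the contraction mapping theorem \cite{SELL197342} together with \cite[Theorem 3.1]{563625} to conclude that the unique fixed point is the GASE and is the limit of $\mu_t$. Writing $Q^{*\phi}_\mu$ for the GASE of Proposition~\ref{eQ_MFCG} and $\tilde\mu^{*\phi}_{Q,\mu}$ for the one of Proposition~\ref{emu'_MFCG} (suppressing the $(x,a)$-dependence as elsewhere), the right-hand side equals, by~\eqref{def:P3T3bis}, $\mathcal{P}_3(\mu,Q^{*\phi}_\mu,\tilde\mu^{*\phi}_{Q^{*\phi}_\mu,\mu}) = \mu\,\mathrm{P}^{\softmin_\phi Q^{*\phi}_\mu,\mu,\tilde\mu^{*\phi}_{Q^{*\phi}_\mu,\mu}} - \mu$, so the relevant map is $\Phi(\mu) := \mu\,\mathrm{P}^{\softmin_\phi Q^{*\phi}_\mu,\mu,\tilde\mu^{*\phi}_{Q^{*\phi}_\mu,\mu}}$ and the ODE reads $\dot\mu_t = \Phi(\mu_t) - \mu_t$, whose equilibria are the fixed points of $\Phi$.

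The first, and main, step is to establish that the auxiliary map $\mu\mapsto Q^{*\phi}_\mu$ is Lipschitz. Since $\mathcal{B}_{\mu,\tilde\mu}$ is a $\gamma$-contraction with $\|\mathcal{B}_{\mu,\tilde\mu}(0)\|_\infty\le\|f\|_\infty$, its fixed point obeys $\|Q^{*\phi}_\mu\|_\infty\le\|f\|_\infty/(1-\gamma)$ uniformly in $\mu$. Starting from $Q^{*\phi}_\mu = \mathcal{B}_{\mu,\tilde\mu^{*\phi}_{Q^{*\phi}_\mu,\mu}}Q^{*\phi}_\mu$, I would insert and subtract $\mathcal{B}_{\mu',\tilde\mu^{*\phi}_{Q^{*\phi}_{\mu'},\mu'}}Q^{*\phi}_{\mu'}$ and split the difference into (i) a change of $Q$ alone, controlled by the contraction constant $\kappa := \gamma + (L_f^{loc}+\gamma L_p^{loc}\|f\|_\infty/(1-\gamma))\,\phi|\mathcal{A}|/(|\mathcal{X}|c_{min}-L_p^{loc})<1$ from the proof of Proposition~\ref{eQ_MFCG}; (ii) a change of the first argument of $\mathcal{B}$, bounded via Proposition~\ref{TLipschitz} by a constant times $\|\mu-\mu'\|_1$; and (iii) a change of the local distribution $\tilde\mu^{*\phi}_{Q^{*\phi}_{\mu'},\mu}$ versus $\tilde\mu^{*\phi}_{Q^{*\phi}_{\mu'},\mu'}$, bounded via Proposition~\ref{TLipschitz} and then the $\mu$-Lipschitz estimate of Proposition~\ref{emu'_MFCG} by another constant times $\|\mu-\mu'\|_1$. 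Solving for $\|Q^{*\phi}_\mu-Q^{*\phi}_{\mu'}\|_\infty$ yields a Lipschitz constant $L_{Q,\mu} = (1-\kappa)^{-1}(\cdots)$, finite thanks to the first clause of Assumption~\ref{controlphicg}, and whose form carries the factor $(L_f^{glob}+\gamma L_p^{glob}\|Q^{*\phi}_\mu\|_\infty)/(1-\gamma)$ appearing in the bound flagged right after Assumption~\ref{controlphicg}.

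With $L_{Q,\mu}$ in hand, the contraction estimate for $\Phi$ decomposes $\|\Phi(\mu)-\Phi(\mu')\|_1$ according to the three ways $\mu$ enters: (a) the leading measure together with the $\mu$-slot of the kernel, handled exactly as in the last display of the proof of Proposition~\ref{PLipschitz} using the decomposition of the transition matrix there, and bounded by $(1-|\mathcal{X}|c_{min}^\phi)\|\mu-\mu'\|_1 + L_p^{glob}\|\mu-\mu'\|_1 \le (1-|\mathcal{X}|c_{min}+L_p^{glob})\|\mu-\mu'\|_1$ since $c_{min}\le c_{min}^\phi$; (b) the change of policy $\softmin_\phi Q^{*\phi}_\mu$ versus $\softmin_\phi Q^{*\phi}_{\mu'}$, bounded as in the second display of the proof of Proposition~\ref{PLipschitz} by $\phi|\mathcal{A}|\,\|Q^{*\phi}_\mu-Q^{*\phi}_{\mu'}\|_\infty \le \phi|\mathcal{A}|L_{Q,\mu}\|\mu-\mu'\|_1$; and (c) the change of the local distribution, bounded by $L_p^{loc}\|\tilde\mu^{*\phi}_{Q^{*\phi}_\mu,\mu}-\tilde\mu^{*\phi}_{Q^{*\phi}_{\mu'},\mu'}\|_1$, which by Proposition~\ref{emu'_MFCG} is at most $L_p^{loc}\,(L_p^{glob}+\phi|\mathcal{A}|L_{Q,\mu})\,(|\mathcal{X}|c_{min}-L_p^{loc})^{-1}\|\mu-\mu'\|_1$. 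Summing, the Lipschitz constant of $\Phi$ is at most the quantity $\phi|\mathcal{A}|(|\mathcal{X}|c_{min}-L_p^{glob}+L_p^{loc})(|\mathcal{X}|c_{min}-L_p^{glob})^{-1}(L_f^{glob}+\gamma L_p^{glob}\|Q^{*\phi}_\mu\|_\infty)/(1-\gamma) + L_p^{glob} + 1 - |\mathcal{X}|c_{min}$ flagged after Assumption~\ref{controlphicg}, which that assumption guarantees is strictly below $1$. Hence $\Phi$ is a strict $L^1$-contraction, its unique fixed point $\mu^{*\phi}$ exists by \cite{SELL197342}, and by \cite[Theorem 3.1]{563625} it is the GASE of $\dot\mu_t = \mathcal{P}_3(\mu_t,Q^{*\phi}_{\mu_t},\tilde\mu^{*\phi}_{Q^{*\phi}_{\mu_t},\mu_t})$.

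I expect step (ii)--(iii), the Lipschitz bound on $\mu\mapsto Q^{*\phi}_\mu$, to be the real obstacle: Proposition~\ref{eQ_MFCG} only gives existence, not continuity in $\mu$, and the contraction constant $\kappa$ there is close to $1$, so $L_{Q,\mu}$ carries a $(1-\kappa)^{-1}$ factor; one must use the first clause of Assumption~\ref{controlphicg} to bound $1-\kappa$ away from $0$ while simultaneously keeping the products $\phi|\mathcal{A}|L_{Q,\mu}$ appearing in contributions (b) and (c) small enough that the second clause of Assumption~\ref{controlphicg} closes the argument. The remainder is routine triangle-inequality bookkeeping that simply reuses Propositions~\ref{PLipschitz}, \ref{TLipschitz} and~\ref{emu'_MFCG}.
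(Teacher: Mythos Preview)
Your proposal is correct and follows essentially the same route as the paper: define $\Phi(\mu)=\mu\,\mathrm{P}^{\softmin_\phi Q^{*\phi}_\mu,\mu,\tilde\mu^{*\phi}_{Q^{*\phi}_\mu,\mu}}$, split $\|\Phi(\mu)-\Phi(\mu')\|_1$ into the three contributions (leading measure/global kernel, change of policy via $Q^{*\phi}_\mu$, change of local distribution), and close with the contraction mapping theorem and \cite[Theorem 3.1]{563625}. You are in fact more explicit than the paper about the intermediate Lipschitz bound on $\mu\mapsto Q^{*\phi}_\mu$, which the paper simply inserts as $\|Q^{*\phi}_\mu-Q^{*\phi}_{\mu'}\|_\infty\le (L_f^{glob}+\gamma L_p^{glob}\|Q^{*\phi}_\mu\|_\infty)(1-\gamma)^{-1}\|\mu-\mu'\|_1$ without derivation; just be aware that your $(1-\kappa)^{-1}$ version of this constant is a priori larger than the paper's $(1-\gamma)^{-1}$, so matching the exact ``flagged'' constant may require a slightly sharper bookkeeping than you sketch.
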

\begin{proof}
We show that $\mu \mapsto \mathrm{P}^{\softmin_\phi Q^*_{\mu},\mu,\tilde\mu^{*\phi}_{Q^{*\phi}_{\mu},\mu}}\mu$ is a strict contraction. We have, denoting $\tilde\mu^{*\phi}_{Q^{*\phi}_{\mu},\mu}$ as $\mu'_\mu$ for the sake of brevity:
    \begin{align*}
        &\|\mathrm{P}^{\softmin_\phi Q^*_{\mu}, \mu, \mu'_\mu}\mu - \mathrm{P}^{\softmin_\phi Q^*_{\mu_1}, \mu_1, \mu'_{\mu_1}}\mu_1\|_1 
        \\
        &\leq |\mathrm{P}^{\softmin_\phi Q^*_{\mu}, \mu,\mu'_\mu}\mu - \mathrm{P}^{\softmin_\phi Q^*_{\mu'}, \mu,\mu'_{\mu_1}}\mu\|_1+ \|\mathrm{P}^{\softmin_\phi Q^*_{\mu}, \mu,\mu'_{\mu_1}}\mu - \mathrm{P}^{\softmin_\phi Q^*_{\mu_1}, \mu,\mu'_{\mu_1}}\mu\|_1 \\&\hspace{5mm}+\|\mathrm{P}^{\softmin_\phi Q^*_{\mu_1}, \mu,\mu'_{\mu_1}}\mu - \mathrm{P}^{\softmin_\phi Q^*_{\mu_1}, \mu_1,\mu'_{\mu_1}}\mu_1\|_1 
        \\
        &\leq \|\mathcal{P}_3(\mu, Q^*_{\mu},\mu'_\mu) - \mathcal{P}_3(\mu, Q^*_{\mu},\mu'_{\mu_1})\|_1 +\|\mathcal{P}_3(\mu, Q^*_{\mu},\mu'_{\mu_1}) - \mathcal{P}_3(\mu, Q^*_{\mu_1},\mu'_{\mu_1})\|_1 
        \\
        &\qquad +\|\mathcal{P}_3(\mu, Q^*_{\mu},\mu'_{\mu_1}) - \mathcal{P}_3(\mu_1, Q^*_{\mu_1},\mu'_{\mu_1})\|_1
        \\
        &\leq L_{p}^{loc}\|\mu'_{\mu}-\mu'_{\mu_1}\|_1+\|\mathcal{P}_3(\mu, Q^*_{\mu},\mu'_{\mu_1}) - \mathcal{P}_3(\mu, Q^*_{\mu_1},\mu'_{\mu_1})\|_1 +(1+L_{p}^{glob}-|\mathcal{X}|c_{min})\|\mu-\mu_1\|_1\\
        &\leq\frac{L_{p}^{loc}\phi|\mathcal{A}|}{|\mathcal{X}|c_{min}-L_p^{glob}}\|Q^*_{\mu}-Q^*_{\mu_1}\|_\infty+ \phi|\mathcal{A}|\|Q^*_{\mu}-Q^*_{\mu_1}\|_\infty +(1+L_{p}^{glob}-|\mathcal{X}|c_{min})\|\mu-\mu_1\|_1\\
        &\leq \left(\frac{\phi |\mathcal{A}|(|\mathcal{X}|c_{min} -L_{p}^{glob}+ L_{p}^{loc})}{|\mathcal{X}|c_{min}-L_{p}^{glob}}\frac{L_{f}^{glob} + \gamma L_{p}^{glob}\|Q^*_{\mu}\|_\infty}{1-\gamma} + L_{p}^{glob} + 1-|\mathcal{X}|c_{min}\right)\|\mu-\mu_1\|_1,
    \end{align*}
    which shows the strict contraction property. As a result, by contraction mapping theorem \cite{SELL197342}, a unique GASE exists and furthermore by \cite[Theorem 3.1]{563625}, it is the limit of $\mu_t$.    
\end{proof}

\section{Proof of Lemma \ref{musystem}}\label{prooflemma}
\begin{proof}
Recall system \eqref{mus}:
\begin{align*}
    \begin{cases}
        \tilde\mu^{(x,a)}=\tilde\mu^{(x,a)} \tilde{\mathrm{P}}^{\alpha, \mu, \tilde\mu^{(x,a)}}_{(x,a)}\\
        \mu=\mu\mathrm{P}^{\alpha, \mu, \tilde\mu^{(x,\alpha(x))}}.
    \end{cases}
\end{align*}
Firstly, for fixed $\mu$, by a similar argument as in Proposition \ref{emu'_MFCG},  $\tilde\mu^{(x,a)}\mapsto \tilde\mu^{(x,a)} \tilde{\mathrm{P}}^{\alpha, \mu, \tilde\mu^{(x,a)}}_{(x,a)}$ is a strict contraction, which guarantees that $\tilde\mu^{(x,a)}=\tilde\mu^{(x,a)} \tilde{\mathrm{P}}^{\alpha, \mu, \tilde\mu^{(x,a)}}_{(x,a)}$ has unique solution. Then, we can deduce that, for all $x$, $\tilde\mu^{(x,\alpha(x))}$ satisfies 
\begin{align}\label{localeq}
    \tilde\mu^{(x,\alpha(x))}=\tilde\mu^{(x,\alpha(x))} \tilde{\mathrm{P}}^{\alpha, \mu, \tilde\mu^{(x,\alpha(x))}}_{(x,\alpha(x))} = \sum_{x'}\tilde\mu^{(x,\alpha(x))}(x')p(\cdot|x',\alpha(x'),\mu,\tilde\mu^{(x,\alpha(x))}).
\end{align}
Let us denote $\tilde\mu^{(x,\alpha(x))}$ by $\nu^x$. We want to show that $\nu^x$ is the same for all $x$. By definition in \eqref{def:P3T3}, we can show that \eqref{localeq} can be written as $\nu^x = \nu^x\mathrm{P}^{\alpha, \mu, \nu^x}$. Since $\nu^x \mapsto \nu^x\mathrm{P}^{\alpha, \mu, \nu^x}$ is a strict contraction,  we have a unique solution for $\nu^x = \nu^x\mathrm{P}^{\alpha, \mu, \nu^x}$ and, consequently, $\nu^x$ is independent of $x$. Now the second equation in \eqref{mus} is well-defined. 

Next we want to show that the solution to $\eqref{mus}$ is unique. Let us consider  the following system first:
\begin{align}\label{subs}
    \begin{cases}
        \nu^x = \nu^x\mathrm{P}^{\alpha, \mu, \nu^x}\\
        \mu=\mu\mathrm{P}^{\alpha, \mu, \nu^x}
    \end{cases}
\end{align}
The above system \eqref{subs} is the system  \eqref{mus} with $a=\alpha(x)$, and we can show it has a unique solution as follows. Assuming $\nu'^{x}$ and $\mu'$ also satisfy \eqref{subs}, then
\begin{align*}
    \|\nu^x-\nu'^x\|_1+\|\mu-\mu'\|_1 &= \|\nu^x\mathrm{P}^{\alpha, \mu, \nu^x}- \nu'^x\mathrm{P}^{\alpha, \mu', \nu'^x}\|_1 + \|\mu\mathrm{P}^{\alpha, \mu, \nu_x}-\mu'\mathrm{P}^{\alpha, \mu', \nu'_x}\|_1\\
    &\leq \|\nu^x\mathrm{P}^{\alpha, \mu, \nu^x}- \nu'^x\mathrm{P}^{\alpha, \mu, \nu'^x}\|_1+\|\nu'^x\mathrm{P}^{\alpha, \mu, \nu'^x}- \nu'^x\mathrm{P}^{\alpha, \mu', \nu'^x}\|_1 \\
    &\quad \|\mu\mathrm{P}^{\alpha, \mu, \nu_x}-\mu'\mathrm{P}^{\alpha, \mu', \nu_x}\|_1+\|\mu'\mathrm{P}^{\alpha, \mu', \nu_x}-\mu'\mathrm{P}^{\alpha, \mu', \nu'_x}\|_1\\
    &\leq (L_p^{loc}+1-|\mathcal{X}|c_{min})\|\nu^x-\nu'^x\|_1 + L_p^{glob}\|\mu-\mu'\|_1 \\
    &\quad + (L_p^{glob}+1-|\mathcal{X}|c_{min})\|\mu-\mu'\|_1 + L_p^{loc}\|\nu^x-\nu'^x\|_1\\
    &\leq (2L_p^{loc}+1-|\mathcal{X}|c_{min})\|\nu^x-\nu'^x\|_1 + (2L_p^{glob}+1-|\mathcal{X}|c_{min})\|\mu-\mu'\|_1
\end{align*}
which implies that
\[
(|\mathcal{X}|c_{min}-2L_p^{loc})\|\nu^x-\nu'^x\|_1 + (|\mathcal{X}|c_{min}-2L_p^{glob})\|\mu-\mu'\|_1\leq 0.
\]
By Assumption \ref{mfcglp}, we have $(|\mathcal{X}|c_{min}-2L_p^{loc})>0$ and $(|\mathcal{X}|c_{min}-2L_p^{glob})>0$, and consequently we deduce $ \|\nu^x-\nu'^x\|_1=\|\mu-\mu'\|_1=0$, so that $\nu^x=\nu'^x$ and $\mu=\mu'$, which proves  that \eqref{subs} has at most one solution. 

Then, we can show that there exists a unique solution to \eqref{mus}. Assume that there exists another solution $(\tilde\mu^{''(x,a)},\mu'')$ with $\mu''\neq\mu$ satisfying \eqref{mus}. Consequently,  $(\tilde\mu^{''(x,\alpha(x))},\mu'')$ is a solution to \eqref{subs} which is a contradiction with the uniqueness for \eqref{subs}. As a result, there is a unique $\mu$ solving \eqref{mus}, and then by the strict contraction property of $\tilde\mu^{(x,a)}\mapsto \tilde\mu^{(x,a)} \tilde{\mathrm{P}}^{\alpha, \mu, \tilde\mu^{(x,a)}}_{(x,a)}$, uniqueness of $\tilde\mu^{(x,a)}$ is obtained. As a result, the system \eqref{mus} has unique solution $(\tilde\mu^{(x,a)},\mu)$.

Finally, we want to show that $\mu=\nu^x$.  Since $\nu \mapsto \nu\mathrm{P}^{\alpha, \nu, \nu}$ is a strict contraction,  the equation $\nu = \nu\mathrm{P}^{\alpha, \nu, \nu}$ has a unique and well-defined solution $\nu$. Then, one can check that $\mu=\nu^x=\nu$ is a solution of \eqref{subs} and conclude by its uniqueness property. 
\end{proof}

\end{document}